\documentclass[reqno,10pt]{amsart}
\usepackage{amssymb}
\usepackage{latexsym}
\usepackage{hyperref}
\usepackage{amsmath}
\usepackage{amscd}
\usepackage{cite}
\usepackage{color}
\usepackage{enumerate}
\usepackage{amsfonts}
\usepackage{graphicx}
\usepackage{mathrsfs}
\usepackage{setspace}

\theoremstyle{plain}
\newtheorem{theorem}{Theorem}
\newtheorem{proposition}[theorem]{Proposition}
\newtheorem{lemma}[theorem]{Lemma}
\newtheorem{corollary}[theorem]{Corollary}

\theoremstyle{definition}

\newtheorem{remark}[theorem]{Remark}

\numberwithin{equation}{section}
\numberwithin{theorem}{section}
\let\Re=\undefined\DeclareMathOperator*{\Re}{Re}
\let\Im=\undefined\DeclareMathOperator*{\Im}{Im}

\def\ge{\geqslant}
\def\le{\leqslant}
\def\geq{\geqslant}
\def\leq{\leqslant}

\def\R{\mathbb{R}}
\def\C{\mathbb{C}}

\begin{document}
\title[3D cubic NLS with a repulsive potential]{Blow-up or grow-up for the focusing 3D cubic NLS with a repulsive inverse-power potential at the mass--energy threshold}
\author[A. Ardila]{Alex H. Ardila}
\address{Alex H. Ardila
\newline \indent Department of Mathematics, Universidad del Valle, Colombia}
\email{ardila@impa.br}
\author[Z. Ma]{Zuyu Ma}
\address{Zuyu Ma\newline \indent Institute of Applied Physics and Computational Mathematics, Beijing 100088, China}
\email{mazuyu23@gscaep.ac.cn}
\author[Y. Wang]{Ying Wang}
\address{Ying Wang \newline \indent Basque Center for Applied Mathematics, Bilbao, Spain}
\email{ywang@bcamath.org}
\subjclass[2020]{35Q55}
\keywords{blow-up, grow-up, nonlinear Schr\"odinger equation, inverse-power potential}
\begin{abstract}
We consider the focusing cubic nonlinear Schrödinger equation with a repulsive inverse-power potential \(V(x)=a|x|^{-\mu}\), where \(a>0\) and \(1<\mu\leq 2\). At the mass–energy threshold, Miao, Murphy, and Zheng, as well as Ardila, Hamano, and Ikeda, established sharp scattering results in the positive virial region. In this paper, we continue to study the dynamics in the negative virial region and show that, in each time direction, solutions either blow up in finite time or grow up.

\end{abstract}
\maketitle

\section{Introduction}

In this paper, we consider the focusing cubic nonlinear Schr\"odinger
equation with a repulsive inverse-power potential
\begin{equation*}\label{NLSV}
\begin{cases}
i\partial_tu+\Delta u-V(x)u+|u|^2u=0,\\
u|_{t=0}=u_0\in H^1(\mathbb R^3),
\end{cases}
\tag{$\mathrm{NLS}_V$}
\end{equation*}
where $u:I\times\mathbb R^3\to\mathbb C$ and $I$ is its maximal
lifespan. Here
\[
V(x)=a|x|^{-\mu},\qquad a>0,\qquad 1<\mu\leq2.
\]
We define $H=-\Delta+V$ and the homogeneous Sobolev norm adapted to $H$ by
\[
\|u\|_{\dot H_V^1}^2
=\langle Hu,u\rangle
=\int_{\mathbb R^3}\bigl(|\nabla u|^2+V(x)|u|^2\bigr)\,dx.
\]
The Cauchy problem \eqref{NLSV} is locally well-posed in
$H^1(\mathbb R^3)$; see, for example, \cite{GWY,KMVZ}. In addition, its
solutions conserve the mass and energy, defined respectively by
\[
M(u(t)):=\int_{\mathbb R^3}|u(t,x)|^2\,dx
\]
and
\[
E_V(u(t)):=\int_{\mathbb R^3}
\left[\frac12\bigl(|\nabla u(t,x)|^2+V(x)|u(t,x)|^2\bigr)
-\frac14|u(t,x)|^4\right]dx.
\]

To state our main result, we first recall the dynamics of the standard
cubic NLS
\begin{equation*}\label{NLS0}
i\partial_tu+\Delta u+|u|^2u=0,\qquad
(t,x)\in I\times\mathbb R^3,
\tag{$\mathrm{NLS}_0$}
\end{equation*}

The mass-energy level $M(Q)E_0(Q)$  is dictated  by the optimal Gagliardo--Nirenberg inequality \cite{W}
\[
    \|f\|_{L^4}^4
    \leq C_{\mathrm{GN}}
    \|f\|_{L^2}\|\nabla f\|_{L^2}^3,
\]
whose sharp constant is attained by the ground state $Q$.\footnote{Here the ground state $Q$ is known as the unique radial, positive solution to the elliptic equation $-\Delta Q+Q-Q^3=0$.} This variational structure yields the subthreshold dichotomy: if
\[
    M(u_0)E_0(u_0) < M(Q)E_0(Q)
\]
and
\[
    \|u_0\|_{L^2}\|\nabla u_0\|_{L^2}
    <
    \|Q\|_{L^2}\|\nabla Q\|_{L^2},
\]
then the solution is global and scatters, otherwise
finite-time blow-up holds under the  radiality or finite-variance
assumptions; see \cite{DHR, HR}. The sharp Gagliardo--Nirenberg inequality shows that \(M(Q)E_0(Q)\) is the natural mass--energy threshold. Below the threshold, variational estimates yield a uniform separation from the ground-state level, preserving the coercive sign throughout the evolution and leading to the standard subthreshold scattering/blow-up dichotomy. At the threshold, this uniform gap may disappear, allowing solutions to approach the ground-state orbit and exhibit special dynamics that do not occur in the subthreshold regime.
In fact, at the mass--energy threshold
\[
M(u)E_0(u)=M(Q)E_0(Q),
\]

Duyckaerts and Roudenko \cite{DR} constructed two distinguished radial solutions \(Q^+\) and \(Q^-\)  satisfying
$$
M(Q^\pm)=M(Q),\qquad E_0(Q^\pm)=E_0(Q),
$$
and, for some $e_0>0$,
\[
\|Q^\pm(t)-e^{it}Q\|_{H^1}\lesssim e^{-e_0t},
\qquad t\ge0.
\]
Moreover,
$$
\|\nabla Q^-(0)\|_{L^2}<\|\nabla Q\|_{L^2},
\qquad
\|\nabla Q^+(0)\|_{L^2}>\|\nabla Q\|_{L^2},
$$
where \(Q^-\) is global and scatters as \(t\to-\infty\), while \(Q^+\) blows up in finite negative time. 


Together with the standing wave \(e^{it}Q\), these are the exceptional solutions appearing in the threshold classification of Duyckaerts and Roudenko \cite{DR}.

\begin{theorem}[Threshold dynamics for the standard cubic NLS,
{\cite[Theorem~3]{DR}}]
Let $u$ be a maximal-lifespan solution to \eqref{NLS0} satisfying
$M(u)E_0(u)=M(Q)E_0(Q)$.
\begin{enumerate}
\item If
\[
\|u_0\|_{L^2}\|u_0\|_{\dot H^1}
<\|Q\|_{L^2}\|Q\|_{\dot H^1},
\]
then $u$ either scatters in both time directions or agrees with $Q^-$ up
to the symmetries of \eqref{NLS0}.
\item If
\[
\|u_0\|_{L^2}\|u_0\|_{\dot H^1}
=\|Q\|_{L^2}\|Q\|_{\dot H^1},
\]
then $u=e^{it}Q$ up to symmetries.
\item If
\[
\|u_0\|_{L^2}\|u_0\|_{\dot H^1}
>\|Q\|_{L^2}\|Q\|_{\dot H^1}
\]
and $u_0$ is radial or $|x|u_0\in L^2(\R^3)$, then $u$ either blows up in
finite positive and negative time or agrees with $Q^+$ up to symmetries.
\end{enumerate}
\end{theorem}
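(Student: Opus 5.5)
This is the Duyckaerts--Roudenko threshold classification. I would follow their modulation and compactness strategy, reducing each part to an analysis near the ground-state orbit. By the scaling symmetry, normalize $M(u)=M(Q)$, so $E_0(u)=E_0(Q)$. Define
\[
\delta(t)=\bigl|\|\nabla u(t)\|_{L^2}^2-\|\nabla Q\|_{L^2}^2\bigr|.
\]

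\textbf{Variational step and part (2).} Sharp Gagliardo--Nirenberg together with energy conservation shows that the sign of $\|\nabla u(t)\|_{L^2}-\|\nabla Q\|_{L^2}$ is preserved in time. It also gives a quantitative stability statement: if $\delta(t)$ is small, then there exist parameters $\theta(t)$ and $x(t)$ with
\[
\|u(t)-e^{i\theta(t)}Q(\cdot-x(t))\|_{H^1}\lesssim\delta(t).
\]
This follows from uniqueness of the Weinstein optimizers plus a coercivity property of the linearized operator on the orthogonal complement of its kernel and of $Q$. Part (2) is immediate: $\delta(0)=0$ makes $u_0$ an optimizer, so $u_0=e^{i\theta}Q(\cdot-x_0)$. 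In the regime $\delta\ll1$, modulation theory gives
\[
|\theta'(t)-1|+|x'(t)|\lesssim\delta(t).
\]

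\textbf{Part (1).} Suppose $u$ does not scatter forward. The Kenig--Merle concentration-compactness scheme, sharpened to the threshold, shows that the orbit $\{u(t,\cdot+x(t))\}$ is precompact in $H^1$ and that $u$ is global forward. A localized virial identity gives
\[
\frac{d}{dt}\,\Im\!\int \varphi_R\,\bar u\,\nabla u\sim 8\bigl(\|\nabla u\|_{L^2}^2-\tfrac34\|u\|_{L^4}^4\bigr)\approx c\,\delta(t),
\]
up to tail errors controlled by compactness. The error terms are handled with the modulation bounds and with $|x(t)|=o(t)$, which comes from momentum zero after a Galilean transform. Integrating on $[t_1,t_2]$ then yields
\[
\int_{t_1}^{t_2}\delta(t)\,dt\lesssim \sup_{t\in[t_1,t_2]}\delta(t),
\]
and a Gronwall-type argument upgrades this to exponential convergence $\delta(t)\lesssim e^{-ct}$. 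Hence $u(t)\to e^{i\theta(t)}Q(\cdot-x(t))$ exponentially fast.

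\textbf{Part (3).} Under the finite-variance or radial hypothesis, the (localized) virial functional has second derivative at most $-c\,\delta(t)$, since $\|\nabla u\|_{L^2}>\|\nabla Q\|_{L^2}$ at the threshold. If $u$ were global forward, the variance argument would give the same integral bound as in part (1), hence exponential convergence to the ground-state orbit. Combined with convexity this forces the variance, or its localized surrogate, to behave compatibly only for special solutions, so otherwise blow-up occurs in finite time.

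\textbf{Uniqueness and main obstacle.} The main obstacle is the rigidity step: showing that every solution converging exponentially to $e^{it}Q$ equals $e^{it}Q$, $Q^+$ or $Q^-$. For this I would study the linearized operator $\mathcal L$ about $Q$. Its spectrum has real eigenvalues $\pm e_0$, and the rest is handled by coercivity modulo the generalized kernel. The argument has three parts:
\begin{enumerate}
\item Construct a one-parameter family $U^A$ with asymptotics $e^{it}(Q+Ae^{-e_0t}\mathcal Y_++\dots)$ by a contraction argument in spaces of functions decaying like $e^{-(k+1/2)e_0 t}$.
\item Show that any solution with $\|u-e^{it}Q\|_{H^1}\lesssim e^{-ct}$ has an expansion whose leading term is $Ae^{-e_0t}\mathcal Y_+$. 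Decay below every exponential rate is ruled out by the coercivity of $\mathcal L$, and then $u=U^A$.
\item Use time translation, $A\mapsto A\,e^{e_0t_0}$, to reduce every $A$ to $A=\pm1$, giving $Q^\pm$. Which one occurs is fixed by the sign of $\|\nabla u\|_{L^2}^2-\|\nabla Q\|_{L^2}^2$.
\end{enumerate}
Combining this with parts (1) and (3) gives the classification.
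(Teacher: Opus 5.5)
This statement is not proved in the paper; it is quoted from Duyckaerts--Roudenko \cite{DR}, so the relevant comparison is with their argument. Your outline follows its architecture: modulation near $Q$; compactness and a localized virial on the subcritical side; a virial argument on the supercritical side; exponential convergence; then existence and uniqueness of $U^A$, with $A$ normalized to $\pm1$ by time translation. The uniqueness part is fine at this level of detail. The gap is in the step that actually produces exponential convergence, on both sides.

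\textbf{Part (1).} The localized virial does not give $\int_{t_1}^{t_2}\delta\lesssim\sup_{[t_1,t_2]}\delta$ with a uniform constant. The cutoff radius has to exceed the spatial center, so what one gets is
\[
 \int_\sigma^\tau\delta(t)\,dt\le C\Bigl[1+\sup_{\sigma\le t\le\tau}|x(t)|\Bigr]\bigl(\delta(\sigma)+\delta(\tau)\bigr).
\]
A bound by the supremum would not close a Gronwall argument anyway: $\delta(t)=\max_k2^{-k}e^{-|t-2^k|}$ satisfies it, yet $\delta(2^k)=2^{-k}$. You need the endpoint form, which yields $\int_t^\infty\delta\lesssim\delta(t)$.

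The information $|x(t)|=o(t)$, coming from $P(u)=0$, only gives $\frac1T\int_0^T\delta\to0$, hence times $t_n\to\infty$ with $\delta(t_n)\to0$. What is missing is boundedness of the center. From $|x'|\lesssim\delta$ where $\delta$ is small, and compactness where it is not, one gets $|x(\tau)-x(\sigma)|\lesssim\int_\sigma^\tau\delta$ for $\tau\geq\sigma+1$. Inserting this into the display with $\sigma=t_n$ shows $\sup_t|x(t)|<\infty$. Only then do $\int_t^\infty\delta\lesssim\delta(t)$ and exponential decay follow.

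\textbf{Part (3).} Here there is no compactness. If $u$ is forward global, concavity of the variance only gives $V'\geq0$ and $4\int_t^\infty\delta\le V'(t)$. To close the argument you need $V'(t)\lesssim\delta(t)$, and your sentence about convexity forcing special solutions does not provide it.

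For finite variance, this comes from the inequality
\[
 \Bigl(\Im\int_{\R^3}x\cdot\nabla u\,\overline{u}\,dx\Bigr)^2\lesssim\delta(u)^2\int_{\R^3}|x|^2|u|^2\,dx.
\]
It is obtained by applying the sharp Gagliardo--Nirenberg inequality to $e^{is|x|^2}u$ at the threshold and optimizing in $s$. You also need boundedness of $V$, which follows by integrating $V'/\sqrt V\lesssim-V''$.

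In the radial case one instead uses a truncated variance $V_R$ centered at the origin. Its error is controlled by the radial Sobolev inequality when $\delta$ is large, and by modulation when $\delta$ is small (the error vanishes at $Q$). One then uses $|V_R'|\lesssim R\,\delta$.

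Without these estimates, neither side reaches the exponential convergence to $e^{it}Q$ (after fixing the limiting phase and translation) that your rigidity step requires.
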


Thus, under the hypotheses of the preceding theorem and up to the
symmetries of \eqref{NLS0}, the only threshold dynamics
apart from scattering in both time directions and finite-time blow-up
in both time directions are $e^{it}Q$, $Q^+$, and $Q^-$. Campos, Farah,
and Roudenko \cite{CFR} extended this classification to the full
intercritical range.

Without the radiality or finite-variance assumption, a
solution on the supercritical side may instead be global with
unbounded $\dot H^1$ norm.
We say that a solution \(u\) \emph{grows up in positive time} if it is defined on \([0,\infty)\) and
\begin{align}
\limsup_{t\to+\infty}\|u(t)\|_{\dot H^1}=+\infty.
\end{align}
Grow-up in negative time is defined analogously. We also set
$$
K_0(f):=\|f\|_{\dot H^1}^2-\frac34\|f\|_{L^4}^4.
$$
Gustafson and Inui \cite{GI} subsequently removed the radiality and finite-variance assumptions from the supercritical part of the threshold theory. Their result, which is highly relevant to our main theorem, is the following.


\begin{theorem}[Blow-up or grow-up at the threshold,
{\cite[Theorem~1.1]{GI}}]\label{free-blowup-growup}
Let $u$ be a maximal-lifespan solution to \eqref{NLS0} satisfying
\[
 M(u_0)=M(Q),\qquad E_0(u_0)=E_0(Q),\qquad K_0(u_0)<0.
\]
Then $u$ has one of the following five behaviors:
\begin{enumerate}
\item $u$ blows up in both time directions;
\item $u$ blows up in positive time and grows up in negative time;
\item $u$ blows up in negative time and grows up in positive time;
\item $u$ grows up in both time directions;
\item $u$ agrees with $Q^+$ up to the symmetries of \eqref{NLS0}.
\end{enumerate}
\end{theorem}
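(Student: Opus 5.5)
Since the statement is the threshold theorem of Gustafson and Inui for \eqref{NLS0}, I would prove it by contradiction in each time direction, combining a localized virial identity with modulation analysis near the ground-state orbit. By symmetry in time, it suffices to show the following: if $u$ is global forward in time and $\sup_{t\ge0}\|u(t)\|_{\dot H^1}<\infty$, then $u$ agrees with $Q^+$ up to symmetries. Scaling reduces to $M(u)=M(Q)$ and $E_0(u)=E_0(Q)$. By the sharp Gagliardo--Nirenberg inequality and continuity, $K_0(u(t))<0$ for all $t$, so $\|\nabla u(t)\|_{L^2}>\|\nabla Q\|_{L^2}$ throughout. I define $\delta(t):=\bigl|\|\nabla u(t)\|_{L^2}^2-\|\nabla Q\|_{L^2}^2\bigr|$. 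The standard threshold variational estimate then gives $|K_0(u(t))|\sim\delta(t)$ together with modulation parameters $\theta(t),x(t)$ such that $\|u(t)-e^{i\theta}Q(\cdot-x(t))\|_{H^1}\lesssim\delta(t)$ whenever $\delta(t)$ is small.

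The first step is the localized virial estimate. Set $I_R(t)=\int\phi_R(x-c(t))|u|^2\,dx$, with $\phi_R$ equal to $|x|^2$ near the origin and truncated at radius $R$. Then
\[
\frac{d}{dt}I_R=4\Im\int\nabla\phi_R\cdot\nabla u\,\bar u ,\qquad \frac{d^2}{dt^2}I_R=8K_0(u)+A_R(u),
\]
where $A_R$ is an error term supported on $|x-c|\gtrsim R$. Without radiality or finite variance, I cannot bound the tail $A_R$ by decay of $u$. Instead, I use uniform $H^1$ boundedness together with the modulation: when $\delta(t)$ is small, $u$ is close to a translated $Q$, so centering at $c=x(t)$ makes $|A_R|\lesssim\delta(t)$ times a small constant (for $R$ large). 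When $\delta(t)\ge\delta_0$, I need $|A_R|\le\epsilon$ with $8|K_0|\ge c\delta_0$. This requires controlling mass leaving the ball, which I plan to do via an additional compactness property.

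The second step, which I expect to be the main obstacle, is to obtain a compactness-type control of the tail. I would argue via profile decomposition that bounded threshold solutions are precompact modulo translation: any profile with smaller mass-energy would be subcritical, and since $K_0$ is negative it would blow up or be excluded by a Holmer--Roudenko-type argument. The key point is then to control the translation parameter $x(t)$, using momentum (after a Galilean boost making $P(u)=0$) to show $|x(t)|=o(t)$. Integrating the virial identity gives $\int_0^T\delta(t)\,dt\lesssim R(T)\sup\delta$, and with $R$ chosen as $o(T)$ this yields $\frac1T\int_0^T\delta\to0$, hence a sequence $t_n$ with $\delta(t_n)\to0$. Refining this as in Duyckaerts--Roudenko, the bound $|\frac{d}{dt}I_R|\lesssim R\delta$ leads to $\int_t^\infty\delta\lesssim\delta(t)$, hence exponential decay $\delta(t)\lesssim e^{-ct}$ and convergence of $x(t)$.

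The final step is to conclude with the uniqueness part of the Duyckaerts--Roudenko analysis. A threshold solution converging exponentially to $e^{it}Q$ modulo symmetries, with $\|\nabla u\|>\|\nabla Q\|$, must coincide with $Q^+$ up to symmetries; this follows from the linearized-operator spectral analysis and the contraction argument of \cite{DR}. Consequently, in each time direction either blow-up or grow-up occurs, unless $u=Q^+$. Since $Q^+$ blows up in finite negative time and is global forward, the five listed cases exhaust all possibilities.
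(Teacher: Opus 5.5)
The paper does not reprove this result; it quotes it from Gustafson--Inui. Your outline (precompactness modulo translations of a bounded forward orbit via profile decomposition, $P(u)=0$ and $|x(t)|=o(t)$, a localized virial plus modulation giving $\int_t^\infty\delta\lesssim\delta(t)$ and exponential convergence, then Duyckaerts--Roudenko uniqueness to identify $Q^+$) matches the compactness/modulated-virial strategy the paper attributes to them, and it is sound modulo two details you should state correctly.

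First, a Galilean boost cannot normalize $P(u)=0$ while staying at the threshold, since it strictly lowers the energy. Instead, $P(u)\neq0$ is excluded: the boosted solution is subthreshold, so it either scatters or blows up/grows up, and both contradict the assumptions on $u$.

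Second, in the dichotomy step, when $\delta(t_n)$ is not small the profile energies are not a priori positive. So before concluding that every profile is strictly below the threshold, you must show, via stability on compact intervals and weak lower semicontinuity, that every nonlinear profile stays bounded by $\sup_t\|u(t)\|_{H^1}$. This rules out profiles with $E_0\le0$, which have $K_0\le -c<0$ and hence blow up or grow up.
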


Nonlinear Schr\"odinger equations with external potentials have been studied
extensively in recent years; see, for example, \cite{{AHI}, {BCMS}, {GWY}, {KMVZ}, {KMVZZ17}, {MMZ}, {Y20}, {Y21}, {YZZ22}, {YZZ} }
and the references therein. In contrast to the standard cubic NLS, the potential breaks space-translation invariance. This loss of symmetry
introduces additional difficulties in the analysis, particularly in
concentration--compactness arguments. Indeed, profiles concentrating far from the spatial origin are only weakly affected by the potential; in the limit, their dynamics are therefore described by the free equation  \eqref{NLS0}. Thus, as in other dispersive problems with broken symmetries,
a key issue is to understand how the dynamics of \eqref{NLSV} 
are related to those of the limiting free problem \eqref{NLS0}.

Without imposing radial symmetry, the sharp constant in the Gagliardo–Nirenberg inequality that is related to \eqref{NLSV} coincides with the free constant $C_\mathrm{GN}$:
$$ \|f\|_{L^4}^4 \leq C_\mathrm{GN}\|f\|_{L^2}\|f\|_{\dot H_V^1}^3. $$
However, when \(a>0\), equality is never attained; see  \cite[Lemma~2.7]{AHI} and
\cite[Theorem~3.1(ii)]{KMVZ}. Indeed, the upper bound follows from
$$ \|f\|_{\dot H^1}\leq \|f\|_{\dot H_V^1}, $$
while the free constant is recovered by translating a free optimizer sufficiently far from the origin, where the potential vanishes at spatial infinity. Consequently, the natural mass--energy threshold remains the free threshold
$$ M(Q)E_0(Q). $$
This same variational structure also governs the dynamics below the threshold. In particular, if
$$ M(u_0)E_V(u_0)<M(Q)E_0(Q) $$
and
$$ \|u_0\|_{L^2}\|u_0\|_{\dot H_V^1} < \|Q\|_{L^2}\|Q\|_{\dot H^1}, $$
then the corresponding solution is global and scatters in \(H^1\); see \cite{{GWY},{KMVZ}}.

For \eqref{NLSV}, Miao, Murphy, and Zheng \cite{MMZ} established threshold scattering when $V(x)=a|x|^{-2}$ with $a>0$, as well as for a class of
short-range repulsive potentials satisfying
\[
V,\ x\cdot\nabla V\in L^{3/2},\qquad
\sup_{x\in\mathbb R^3}\int_{\mathbb R^3}
\frac{|V(y)|}{|x-y|}\,dy<\infty,
\]
and
\[
V\geq0,\qquad x\cdot\nabla V\leq0.
\]
Ardila, Hamano, and Ikeda \cite{AHI} extended the result to
$V(x)=a|x|^{-\mu}$ with $a>0$ and $1<\mu<2$. Define the virial
functional
\begin{equation}\label{virial-functional}
K_V(u):=\|u\|_{\dot H^1}^2
+\frac{\mu}{2}\int_{\mathbb R^3}V(x)|u(x)|^2\,dx
-\frac34\|u\|_{L^4}^4.
\end{equation}
Their threshold scattering result is the following.

\begin{theorem}[Threshold scattering, \cite{AHI,MMZ}]
\label{threshold-scattering}
Let $V(x)=a|x|^{-\mu}$ with $a>0$ and $1<\mu\leq2$. Suppose that
$u_0\in H^1(\mathbb R^3)$ satisfies
\[
M(u_0)E_V(u_0)=M(Q)E_0(Q),\qquad K_V(u_0)>0.
\]
Then the corresponding solution to \eqref{NLSV} is global and belongs to
$L_{t,x}^5(\mathbb R\times\mathbb R^3)$. In particular, there exist
$u_\pm\in H^1(\mathbb R^3)$ such that
\[
\lim_{t\to\pm\infty}
\|u(t)-e^{-itH}u_\pm\|_{H^1}=0.
\]
\end{theorem}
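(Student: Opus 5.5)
The proof goes by contradiction through concentration--compactness. We reduce to the critical element, show it must either scatter or be compact modulo the free dynamics, and then exclude compactness with a localized virial identity. This matches the strategy of \cite{MMZ,AHI}, and we describe it in the order we would carry it out.

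\textbf{Step 1: the sign of $K_V$ is preserved.} We first show that the threshold condition together with $K_V(u_0)>0$ propagates along the flow. Since the sharp constant for $V$ equals $C_{\mathrm{GN}}$ but is never attained, a solution can never reach the ground-state configuration. Indeed, the identity $M(u)E_V(u)=M(Q)E_0(Q)$ combined with
\[
\|u\|_{L^4}^4\le C_{\mathrm{GN}}\|u\|_{L^2}\|u\|_{\dot H^1_V}^3
\]
forces $\|u\|_{L^2}\|u\|_{\dot H^1_V}\ne\|Q\|_{L^2}\|Q\|_{\dot H^1}$. By continuity, $u(t)$ therefore stays in the region $\|u\|_{L^2}\|u\|_{\dot H^1_V}<\|Q\|_{L^2}\|Q\|_{\dot H^1}$. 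This region gives a uniform $H^1$ bound, and hence global existence. It also gives $K_V(u(t))>0$, since $\mu/2\ge 1/2$ and the potential term carries a positive weight.

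\textbf{Step 2: reduction to a compact solution.} Suppose $\|u\|_{L^5_{t,x}}=\infty$ forward in time. We take a sequence $u(t_n)$ and apply the linear profile decomposition adapted to $e^{-itH}$, as in \cite{KMVZ}. Profiles with translation parameters $|x_n|\to\infty$ are approximated by solutions of \eqref{NLS0}. By energy decoupling, each profile lies at or below the threshold. Any profile strictly below the threshold scatters, by the subthreshold theory of \cite{GWY,KMVZ} for centered profiles and by \cite{DHR} for profiles escaping to infinity. If the solution is to fail to scatter, one profile must carry the full mass and energy. This leaves two cases.
\begin{enumerate}
\item The profile has $|x_n|\to\infty$. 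Then it is a threshold solution of \eqref{NLS0} with $\|\cdot\|_{L^2}\|\cdot\|_{\dot H^1}\le\|Q\|_{L^2}\|Q\|_{\dot H^1}$. By the Duyckaerts--Roudenko theorem it is either scattering, or $Q^-$, or $e^{it}Q$. The last two are excluded by the strict inequality coming from the non-attainment of the sharp constant. A more careful version of this argument shows the gap $\|Q\|_{L^2}\|Q\|_{\dot H^1}-\|u\|_{L^2}\|u\|_{\dot H^1_V}$ cannot tend to zero along a sequence with bounded center unless mass escapes to infinity.
\item The profile stays centered. Then the orbit $\{u(t)\}$ is precompact in $H^1$.
\end{enumerate}

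\textbf{Step 3: the main obstacle.} The hardest part is case (1): the profile escapes to spatial infinity while $\delta(t):=\|Q\|_{L^2}\|Q\|_{\dot H^1}-\|u(t)\|_{L^2}\|u(t)\|_{\dot H^1_V}\to0$. Here the solution looks like a translated $Q$ or $Q^-$ far from the origin. We would exclude this with a modulation analysis near $Q$, following \cite{DR,MMZ}. Writing $u=e^{i\theta}(Q+h)(\cdot-y(t))$, we get $|y(t)|\to\infty$, and the repulsive term $\tfrac{\mu}{2}\int V|u|^2\sim |y|^{-\mu}$ contributes positively to the virial identity.

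\textbf{Step 4: excluding compactness.} For the compact solution we use the truncated virial identity
\[
\frac{d^2}{dt^2}\int\phi_R|u|^2=8K_V(u)+\text{error}.
\]
Precompactness makes the error uniformly small, and the key estimate is $K_V(u)\gtrsim\delta(t)$. When $\delta(t)$ is not small, integrating in time gives a contradiction by the usual argument. When $\delta(t)$ is small, we use modulation: $y(t)$ stays bounded by compactness, and $\delta(t)\ge c>0$ because the potential term keeps us strictly away from $Q$. This closes both cases and shows $u\in L^5_{t,x}$. Scattering then follows from the Strichartz estimates for $e^{-itH}$.
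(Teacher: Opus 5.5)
The paper does not prove this theorem; it imports it from \cite{AHI,MMZ}, so I judge your outline on its own merits. Step~1 and the virial argument for a forward orbit that is precompact in $H^1$ without translations are fine. The threshold-specific part, however, is misstated in Step~2 and then left open in Steps~3--4.

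In Step~2(1), the claim that $Q^-$ and $e^{it}Q$ are excluded by the strict inequality is false. $Q^-$ itself satisfies $\|Q^-(t)\|_{L^2}\|Q^-(t)\|_{\dot H^1}<\|Q\|_{L^2}\|Q\|_{\dot H^1}$. Moreover, the strict inequality for $u(t_n)$ need not survive the limit. Non-attainment of the sharp constant is a property of the functional with $V$, and it is lost once the profile is translated to infinity. Lemma~\ref{qualitative-modulation} shows this degeneration does occur whenever $\delta(u(t_n))\to0$; here $\delta$ is as in \eqref{delta-definition}, which is negative on this side.

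What is true is weaker. Because $u$ is global in both directions, a limiting \eqref{NLS0} profile that scatters in \emph{either} direction (e.g.\ $Q^-$ up to symmetries) can be embedded on the corresponding half-line. Stability then bounds $\|u\|_{L_{t,x}^5([t_n,\infty)\times\R^3)}$ or $\|u\|_{L_{t,x}^5((-\infty,t_n]\times\R^3)}$ uniformly in $n$, which contradicts non-scattering. Combined with \cite{DR}, this leaves only translates of $Q$ as escaping profiles. Hence sequences with $\liminf_n|\delta(u(t_n))|>0$ converge in $H^1$ without translation, and escape happens only when $\delta(u(t_n))\to0$.

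This is a statement about sequences, not a dichotomy for the whole orbit. The solution may alternate between the compact regime and long excursions near far-away copies of $Q$. That case, which you yourself call the main obstacle, is never resolved, and Step~4 does not cover it. There, the bound $|\delta|\geq c$ fails, because Lemma~\ref{power-center-estimate} only gives $(1+|y|)^{-\mu}\lesssim\delta^2$. Nor does the potential produce a contradiction: at fixed mass and energy, $8K_V=4|\delta|-4(2-\mu)\int V|u|^2$. So the potential enters with the unfavorable sign, although only at size $O(\delta^2)$.

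The missing ingredient is a modulated virial estimate centered at the modulation parameter. It is obtained by subtracting the virial functional of the modulated ground state, as in Proposition~\ref{bootstrap}. On the $K_V>0$ side this gives $I''\geq2|\delta|$ and $|I'|\lesssim R|\delta|$. Integrating yields $\int_{t_1}^{t_2}|\delta|\,dt\lesssim R(|\delta(t_1)|+|\delta(t_2)|)$. Combined with $|y'|\lesssim|\delta|$, this shows the center moves only $O(R\eta)$ while $|\delta|\leq\eta$.

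With this estimate, the argument closes as follows.
\begin{enumerate}
\item The precompact-orbit virial yields $t_n\to\infty$ with $\delta(t_n)\to0$, hence $|y(t_n)|\to\infty$.
\item Suppose $u$ never leaves the modulation regime after $t_n$. Then $|y|$ stays bounded, so Lemma~\ref{power-center-estimate} bounds $|\delta|$ from below. This contradicts $\int_{t_n}^\infty|\delta|\,dt<\infty$.
\item Suppose instead $u$ exits at some $s_n$ with $|\delta(s_n)|=\eta$. Then $u(s_n)$ is $O(\eta)$-close in $H^1$ to a copy of $Q$ centered at a distance tending to infinity. This is incompatible with the translation-free compactness at level $\eta$.
\end{enumerate}
This is the mechanism of Theorem~\ref{rigidity} and Section~\ref{S:reduction}, run on the other side of the threshold. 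Without it, your proposal does not prove the theorem.
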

\begin{remark}
    Recently, Yang, Zeng, and Zhang \cite{YZZ} classified the threshold
dynamics for the three-dimensional cubic NLS with an attractive
inverse-square potential.
\end{remark}

Our purpose in this paper is to study the dynamics of \eqref{NLSV} on the negative virial (i.e. $K_V(u_0)<0$) side in the energy space $H_{V}^1$. Under the mass and energy constraints in \eqref{normalized-threshold} below, the condition $K_V(u_0)<0$ is
equivalent to
\[
 \|u_0\|_{\dot H_V^1}>\|Q\|_{\dot H^1};
\]
see  the variational analysis in Section~\ref{preli}.

Our main theorem shows that  every threshold solution on the \(K_V<0\) side either blows up in finite time or grows up in each time direction.
\begin{theorem}\label{main}
Let $V(x)=a|x|^{-\mu}$ with $a>0$ and $1<\mu\leq2$. Let $u$ be a
maximal-lifespan solution to \eqref{NLSV} satisfying
\begin{equation}\label{normalized-threshold}
M(u_0)=M(Q),\qquad E_V(u_0)=E_0(Q),\qquad K_V(u_0)<0.
\end{equation}
Then $u$ has one of the following four behaviors:
\begin{enumerate}
\item $u$ blows up in both time directions;
\item $u$ blows up in positive time and grows up in negative time;
\item $u$ blows up in negative time and grows up in positive time;
\item $u$ grows up in both time directions.
\end{enumerate}
\end{theorem}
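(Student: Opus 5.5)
Since the statement is symmetric under time reversal ($u(t,x)\mapsto \overline{u(-t,x)}$ preserves \eqref{NLSV}, mass, energy and $K_V$), it suffices to prove the following one-sided claim. If $u$ is defined on $[0,\infty)$, then $\limsup_{t\to\infty}\|u(t)\|_{\dot H^1}=\infty$. We argue by contradiction and assume $\sup_{t\ge0}\|u(t)\|_{\dot H^1}<\infty$. Then $\sup_{t\ge0}\|u(t)\|_{H^1}<\infty$, and also $\sup_{t\ge0}\|u(t)\|_{\dot H^1_V}<\infty$ by Hardy's inequality, since $\mu\le 2$.

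The first step is variational. Under \eqref{normalized-threshold}, $K_V(u_0)<0$ propagates, giving $K_V(u(t))<0$ and $\|u(t)\|_{\dot H^1_V}>\|Q\|_{\dot H^1}$ for all $t$. We need a modulation-type control: set
\[
\delta(t):=\|u(t)\|_{\dot H^1_V}^2-\|Q\|_{\dot H^1}^2>0 .
\]
Using $E_V=E_0(Q)$ and the sharp Gagliardo--Nirenberg inequality, we want the lower bound $-K_V(u(t))\gtrsim \delta(t)$, at least for small $\delta$, and $-K_V\gtrsim 1$ when $\delta\gtrsim1$. We also want a key rigidity fact: $\delta(t)$ small forces $u(t)$ to be close, in $H^1$, to $e^{i\theta}Q(\cdot-x(t))$ with $|x(t)|\to\infty$ as $\delta\to0$. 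This holds because GN optimizers for the $V$-problem are not attained, and near-optimizers must escape to spatial infinity, where $V$ vanishes. The terms $\int V|u|^2$ and $\int x\cdot\nabla V\,|u|^2$ are then small, with size $\lesssim |x(t)|^{-\mu}$.

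The second step is a localized virial argument in the style of Gustafson--Inui \cite{GI}. Take
\[
I_R(t)=\int \phi_R(x)|u|^2\,dx ,
\]
with $\phi_R$ equal to $|x|^2$ near the origin. Its derivative $I_R'(t)$ is bounded by $CR$ using the uniform $H^1$ bound. Its second derivative satisfies
\[
I_R''(t)=8K_V(u(t))+\mathcal E_R(t),
\]
where the error $\mathcal E_R(t)$ is controlled by the mass, potential and $L^4$ contributions in $|x|\gtrsim R$. The difficulty is that when $\delta(t)$ is small the mass sits near $x(t)$, which may lie outside $B_R$. We therefore use the GI device. Cover the time interval with a dichotomy. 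At times where $\delta(t)\ge\delta_0$, take $R$ large so that the uniform tail bound gives $\mathcal E_R\le -4K_V$; for this we need approximate compactness of $u$ modulo nothing, which we argue fails and must be handled. At times where $\delta(t)<\delta_0$, use the modulation estimate together with a bound on the parameter $x(t)$ of the form $|x'(t)|\lesssim\delta(t)$. This bound comes from the momentum: at threshold, Galilean invariance is broken by $V$, so one uses $P(u)$ near $0$ from the modulated structure.

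The main obstacle is precisely this translation issue. Without radiality or finite variance, the ground-state bubble can drift. With $V$, the drift parameter $x(t)$ must satisfy $|x(t)|\to\infty$ whenever $\delta(t)\to0$, and this interacts with the cutoff radius $R$. I would follow \cite{GI}: integrate $I_R''$ over $[T_1,T_2]$ and use $\int_{T_1}^{T_2}\delta\,dt\lesssim R\,\bigl(\delta(T_1)+\delta(T_2)\bigr)$ via the modulation estimate, applied on intervals where $|x(t)|\lesssim R$. Then show that $|x(t)|\le R$ persists over long times because $|x'|\lesssim \delta$. This yields $\int_0^\infty\delta\,dt<\infty$, hence $\delta(t_n)\to0$ along a sequence, hence $|x(t_n)|\to\infty$. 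This contradicts the confinement $|x(t)|\lesssim R$, which follows from the virial control once $R$ is chosen relative to $\int\delta$. The $V$-terms help here: $x\cdot\nabla V\le 0$, so they enter $K_V$ with a favorable sign, and the only genuinely new estimate beyond \cite{GI} is that the potential contribution near a far-away bubble is $O(|x(t)|^{-\mu})$. This contribution is absorbed, since it is $\lesssim\delta(t)$ by the modulation relation between $\delta$ and $|x|$. Finally, the conclusion excludes the analogue of $Q^+$, because any $Q^+$-like solution would need $\delta\to0$ with a bounded center, which the first step forbids. This leaves exactly the four listed alternatives.
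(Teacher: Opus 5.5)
\textbf{There is a genuine gap: the regime where $\delta(t)$ is not small, which you flag yourself but never close.}

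At times with $\delta(t)\ge\delta_0$, your virial step needs the tail $\int_{|x|>R}(R^{-2}|u|^2+|u|^4)\,dx$ to be small uniformly in time. That is some tightness of the orbit, and you concede it ``fails and must be handled''. Nothing later in the proposal handles it.

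During such an excursion there is no modulation center $x(t)$ and no bound on its velocity. So mass may leave $B_R$ and the bubble may re-form anywhere, and you cannot integrate $I_R''$ across the excursion. The bootstrap between the confinement $|x(t)|\lesssim R$ and the bound $\int\delta\,dt\lesssim R(\delta(T_1)+\delta(T_2))$ closes only on intervals where the modulation is defined throughout. Hence, for a solution that leaves and re-enters the small-$\delta$ regime infinitely often, you get neither $\int_0^\infty\delta\,dt<\infty$ nor global confinement.

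This is the step where Gustafson--Inui invoke precompactness modulo translations of the whole forward orbit, and it is exactly what is hard for \eqref{NLSV}. A profile with $|x_n|\to\infty$ evolves by \eqref{NLS0}, may be supercritical, and so admits no global nonlinear embedding. A smaller point: the bound $|x'|\lesssim\delta$ comes from differentiating the orthogonality conditions, not from momentum, which is not conserved once $V$ is present.

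\textbf{How the paper supplies the missing reduction.}

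First, a sequence $t_n\to\infty$ with $\delta(u(t_n))\to0$ is obtained with no compactness at all, via the Du--Wu--Zhang argument of Theorem~\ref{T:main-1}. If $\delta\ge\kappa$, the localized mass bound $\int_{|x|\ge R}|u(t)|^2\,dx\le\int_{|x|\ge R/2}|u_0|^2\,dx+Ct/R$ controls the virial error for $t\le\varepsilon R$, which forces $I_R(\varepsilon R)<0$.

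Second, the virial/modulation argument you describe is used only for solutions with $0<\delta\le\eta_*$ for \emph{all} $t\ge0$ (Theorem~\ref{rigidity}). There the weight is centered at $y(0)$, the exact identity $\mathcal G(e^{i\gamma}Q(\cdot-z))=0$ is subtracted, and $\int V|u|^2\lesssim\delta^2$ controls the potential term.

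Third, the general case is reduced to this one by sequential compactness. If $\delta>\eta_*$ infinitely often, let $t_n^+$ be the last time before $t_n$ with $\delta=\eta_*$. Proposition~\ref{P:compact} shows that $\{u(t_n^+)\}$ is precompact in $H^1$. Its proof uses a profile decomposition in which:
\begin{itemize}
\item at most one profile fails to scatter;
\item $|t_n^1|\to\infty$ is excluded by $L^4$ decay;
\item that profile, if it escapes to spatial infinity, needs only the local-in-time embedding of Lemma~\ref{nonlinear-embedding-II}.
\end{itemize}
The limit $w$ is forward global and bounded. Since $t_n-t_n^+\to\infty$, it satisfies $0<\delta(w(t))\le\eta_*$ for all $t\ge0$, contradicting rigidity.

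Without this reduction, or a genuine proof of orbit compactness, your argument only covers solutions that remain near the ground-state orbit from some time on.
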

Together with the threshold scattering result of
Theorem~\ref{threshold-scattering}, Theorem~\ref{main} completes the
qualitative description of the dynamics at the free ground-state
threshold for the repulsive inverse-power potentials considered here.
Solutions on the $K_V>0$ side scatter, whereas solutions on the
$K_V<0$ side cannot remain global with a uniformly bounded $H^1$
norm: in each time direction, they either blow up in finite time or
grow up. No radiality or finite-variance assumption is imposed.

A notable feature of Theorem~\ref{main} is the absence of a
distinguished solution analogous to the free solution $Q^+$. This is
consistent with the variational structure of the repulsive problem.
As shown in \eqref{sharp-gn}, the sharp constant in the Gagliardo--Nirenberg inequality associated
with $H$ on $H^1(\R^3)$ agrees with the free constant but
is not attained; see \cite[Lemma~2.7]{AHI} and
\cite[Theorem~3.1(ii)]{KMVZ}. Indeed, the free constant can be
recovered by translating a free optimizer sufficiently far from the
origin, where the potential vanishes. Consequently, there is no
variational ground state attaining the free threshold
$M(Q)E_0(Q)$, and Theorem~\ref{main} shows that no $Q^+$-type
alternative occurs at this level.

\begin{remark}
For the inverse-square potential, if one restricts attention to radial
solutions, the optimal constant in the sharp Gagliardo--Nirenberg inequality
changes, giving rise to a larger mass--energy threshold determined by a radial
optimizer. Baker, Campos, Murphy, and Scarpelli \cite{BCMS} recently
classified the radial dynamics at this threshold on both the subcritical
and supercritical sides, including the special solutions analogous to
$Q^\pm$. In contrast, Theorem~\ref{main} concerns the free ground-state
threshold and does not require radiality.
\end{remark}


\begin{remark}
    For finite-variance initial data, Ardila, Hamano, and Ikeda \cite[Theorem~1.3(ii)]{AHI} proved finite-time blow-up when $1<\mu<2$, thereby ruling out the grow-up scenario. In Appendix~\ref{finite-variance-appendix}, we extend their result to the endpoint $\mu=2$ and provide a unified  proof for all $\mu\in(1,2]$.
\end{remark}

A scaling argument gives the following consequence of
Theorem~\ref{main}.

\begin{corollary}\label{product-threshold}
Let $V(x)=a|x|^{-\mu}$ with $a>0$ and $1<\mu\leq2$. Let $u$ be a
maximal-lifespan solution to \eqref{NLSV} satisfying
\[
M(u_0)E_V(u_0)=M(Q)E_0(Q),\qquad K_V(u_0)<0.
\]
Then $u$ has one of the four behaviors in Theorem~\ref{main}.
\end{corollary}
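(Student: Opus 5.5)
We reduce Corollary~\ref{product-threshold} to Theorem~\ref{main} through the scaling symmetry of the cubic nonlinearity, keeping track of how the potential transforms. For $\lambda>0$ set
\[
u_\lambda(t,x):=\lambda u(\lambda^2 t,\lambda x).
\]
Then $u_\lambda$ solves the same equation with $V$ replaced by $V_\lambda(x):=\lambda^2V(\lambda x)=a\lambda^{2-\mu}|x|^{-\mu}$. This is again a repulsive inverse-power potential with the same exponent $\mu\in(1,2]$ and the new coefficient $a_\lambda=a\lambda^{2-\mu}>0$. Since Theorem~\ref{main} holds for every $a>0$, changing the coefficient is harmless. For $\mu=2$ the potential is even scale invariant.

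The first step is the bookkeeping. A change of variables gives
\[
M(u_\lambda)=\lambda^{-1}M(u),\qquad
\|\nabla u_\lambda\|_{L^2}^2=\lambda\|\nabla u\|_{L^2}^2,\qquad
\int V_\lambda|u_\lambda|^2=\lambda\int V|u|^2,\qquad
\|u_\lambda\|_{L^4}^4=\lambda\|u\|_{L^4}^4,
\]
where all quantities are evaluated at corresponding times. Hence $E_{V_\lambda}(u_\lambda)=\lambda E_V(u)$, the product $M E$ is preserved, and $K_{V_\lambda}(u_\lambda)=\lambda K_V(u)$. In particular, the sign of the virial functional is unchanged.

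Note that $M(u_0)E_V(u_0)=M(Q)E_0(Q)>0$ with $M(u_0)>0$ forces $E_V(u_0)>0$. We choose $\lambda=M(u_0)/M(Q)$, so that $M(u_{\lambda,0})=M(Q)$. The product identity then gives $E_{V_\lambda}(u_{\lambda,0})=E_0(Q)$, and we also have $K_{V_\lambda}(u_{\lambda,0})<0$. Thus $u_\lambda$ satisfies \eqref{normalized-threshold} for the potential $a_\lambda|x|^{-\mu}$.

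Applying Theorem~\ref{main} to $u_\lambda$ yields one of the four behaviors for $u_\lambda$. These transfer back to $u$ for two reasons. First, the maximal lifespan of $u_\lambda$ is $\lambda^{-2}I$, so finite-time blow-up in a given time direction is preserved. Second, $\|u_\lambda(t)\|_{\dot H^1}=\lambda^{1/2}\|u(\lambda^2 t)\|_{\dot H^1}$, so a $\limsup$ equal to $+\infty$ as $t\to\pm\infty$ is preserved as well. Consequently $u$ has the same behavior in each time direction.

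There is no genuine obstacle. The only point needing care is that rescaling changes the coupling constant $a$. This is permissible because Theorem~\ref{main} is stated uniformly for all $a>0$ and all $\mu\in(1,2]$.
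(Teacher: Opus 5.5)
Your proposal is correct and follows essentially the same route as the paper: rescale by $\lambda=M(u_0)/M(Q)$ via $u_\lambda(t,x)=\lambda u(\lambda^2t,\lambda x)$, observe that $V_\lambda=a\lambda^{2-\mu}|x|^{-\mu}$ is again admissible with $K_{V_\lambda}(u_{\lambda,0})=\lambda K_V(u_0)<0$, and apply Theorem~\ref{main}. Your scaling bookkeeping, and your check that finite lifespan and $\limsup\|u(t)\|_{\dot H^1}=\infty$ transfer back under the rescaling, are accurate and in fact more explicit than the paper's one-line remark.
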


\begin{proof}
Set
\[
\lambda=\frac{M(u_0)}{M(Q)},\qquad
v_0(x)=\lambda u_0(\lambda x),
\]
and define
\[
v(t,x)=\lambda u(\lambda^2t,\lambda x).
\]
Writing
\[
V_\lambda(x)=\lambda^2V(\lambda x)
=a\lambda^{2-\mu}|x|^{-\mu},
\]
we see that $v$ solves $i\partial_t v+\Delta v-V_\lambda(x)v+|v|^2v=0$  and
\[
M(v_0)=M(Q),\qquad E_{V_\lambda}(v_0)=E_0(Q),\qquad
K_{V_\lambda}(v_0)=\lambda K_V(u_0)<0.
\]
Theorem~\ref{main} applies to $v$. Since scaling preserves the four
alternatives in Theorem~\ref{main}, the conclusion follows for $u$.
\end{proof}

\subsection{Outline of the proof of Theorem~\ref{main}}

By the continuation criterion in
Proposition~\ref{local-theory}, conservation of mass, and the
time-reversal symmetry of \eqref{NLSV}, the proof of
Theorem~\ref{main} reduces to the following forward-time statement.

\begin{proposition}\label{prop1.7}
There is no forward-global solution to \eqref{NLSV} satisfying
\eqref{normalized-threshold} and
\begin{equation}\label{bounded-solution}
 \sup_{t\geq0}\|u(t)\|_{H^1}<\infty.
\end{equation}
\end{proposition}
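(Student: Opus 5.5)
Suppose, for contradiction, that $u$ is forward-global, satisfies \eqref{normalized-threshold}, and obeys \eqref{bounded-solution}. The plan follows Gustafson--Inui's argument for Theorem~\ref{free-blowup-growup}, adapted to the potential.

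\textbf{Step 1: a uniform sign and lower bound.} By the variational analysis (sharp Gagliardo--Nirenberg with $\|f\|_{\dot H^1}\le\|f\|_{\dot H^1_V}$), $K_V(u(t))<0$ for all $t\ge0$. I would then show that $-K_V(u(t))$ controls the distance from the threshold: there exists $c>0$ with
\[
-K_V(u(t))\gtrsim \min\bigl\{c,\ \|u(t)\|_{\dot H^1_V}^2-\|Q\|_{\dot H^1}^2\bigr\}
\]
plus a term involving $\int V|u|^2$. The point is that $K_V$ can approach $0$ only if $u(t)$ approaches the free ground-state orbit. Since $V>0$ and the GN constant is not attained, this forces $u(t)$ to look like $e^{i\theta}Q(\cdot-x(t))$ with $|x(t)|\to\infty$.

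\textbf{Step 2: a localized virial identity.} Use a truncated virial weight $\phi_R$, equal to $|x|^2$ on $|x|\le R$, with $\|u(t)\|_{H^1}\le A$. The virial identity gives
\[
\tfrac{d}{dt}\,\Im\int\nabla\phi_R\cdot\nabla u\,\bar u = 8K_V(u)+\mathcal E_R(u),
\]
with the error bounded by $|\mathcal E_R|\lesssim\|u\|_{L^2\cup\dot H^1(|x|>R)}^2$-type tails. The repulsive term has the favorable sign $x\cdot\nabla V\le0$, and $\mu\le2$ keeps the cutoff error controllable. Since the left side is $O(RA^2)$, integrating over $[T_1,T_2]$ bounds $\int_{T_1}^{T_2}(-K_V)\,dt$ by $R$ plus error integrals.

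\textbf{Step 3: modulation.} This is the main obstacle. On times where $-K_V(u(t))$ is small, decompose $u(t)=e^{i\theta(t)}(Q+\varepsilon)(\cdot-x(t))$ with $\|\varepsilon\|_{H^1}\sim\delta(t):=|K_V(u(t))|$. Following \cite{DR,GI}, $\delta(t)$ dominates $|\dot x|$ and the tails away from $x(t)$. Note $\int V|Q(\cdot-x)|^2\sim a|x|^{-\mu}$ must be $\lesssim\delta$, since the energy equals $E_0(Q)$ exactly. So $|x(t)|\gtrsim\delta^{-1/\mu}$, and in this regime the potential contributes a strictly negative drift to $K_V$.

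\textbf{Step 4: choose a moving center and conclude.} Center the virial weight at the modulation parameter on the near-$Q$ intervals (or at $0$ when $\delta$ is large), as in Gustafson--Inui. Then
\[
\int_{T_1}^{T_2}\delta(t)\,dt\lesssim R\,\bigl(\delta(T_1)+\delta(T_2)\bigr),
\]
with $R$ chosen according to $\sup|\dot x|$. A Gronwall/iteration argument then gives $\int_T^\infty\delta\,dt\lesssim e^{-cT}$, so $\delta(t)\to0$ exponentially and $x(t)$ converges to a finite limit, because $|\dot x|\lesssim\delta$ is integrable. But Step 3 requires $|x(t)|\gtrsim\delta(t)^{-1/\mu}\to\infty$, a contradiction.

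The delicate point I expect is Step 3: making the potential term $\int V|u|^2$ enter the modulation estimates as an $O(\delta)$ quantity uniformly, including at $\mu=2$, where the Hardy-type control is borderline. The first approach I would try is to bound it by $\|u\|_{\dot H^1_V}^2-\|u\|_{\dot H^1}^2$, which the energy identity pins to be $\lesssim\delta$.
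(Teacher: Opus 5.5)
\textbf{There is a genuine gap.} Your Steps 1 and 3 are fine. Indeed $-K_V(u)=\tfrac12\delta+\tfrac{2-\mu}{2}\int V|u|^2$, and $\int V|u|^2=\|u\|_{\dot H^1_V}^2-\|u\|_{\dot H^1}^2<\delta$ because $\|u\|_{\dot H^1}>\|Q\|_{\dot H^1}$. The endgame of Step 4 is essentially the paper's rigidity argument (Theorem~\ref{rigidity}), but the paper only runs it for a solution that stays in the small modulation regime for all $t\ge0$.

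The gap is the integral estimate $\int_{T_1}^{T_2}\delta\,dt\lesssim R(\delta(T_1)+\delta(T_2))$ with $R$ uniform over arbitrarily long intervals. To get it you must bound the truncated-virial error by, say, $\tfrac12\delta(t)$ at every $t\in[T_1,T_2]$. That requires the $H^1$ and $L^4$ tails of $u(t)$, outside a fixed ball around a controlled center, to be uniformly small. When $\delta(t)<\eta_0$ modulation provides this. At times when $u(t)$ is a fixed distance from the ground-state orbit, nothing in your argument does:
\begin{itemize}
\item There is no modulation parameter there, so ``$R$ chosen according to $\sup|\dot x|$'' has no meaning.
\item Centering at $0$ does not help. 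The only available tail bound is the mass-flux estimate \eqref{equ:compacl2}, which is useful only for $t\lesssim\varepsilon R$. This is exactly why Section~\ref{sec:viride} yields only a sequence with $\delta(t_n)\to0$ (Corollary~\ref{sequencec}), not integrability of $\delta$.
\item Each change of center adds a boundary term in $I'$. These are not summable if $u$ makes infinitely many excursions away from the orbit, which is precisely the scenario to be excluded.
\end{itemize}
In Gustafson--Inui this step comes from precompactness of the forward orbit modulo translations, which you never establish.

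With the potential, that precompactness is the hard part. A profile with $|x_n|\to\infty$ evolves by \eqref{NLS0}, may lie on the supercritical side, and need not scatter or even be global. So the global nonlinear embedding needed in the concentration-compactness argument is unavailable for it.

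The paper sidesteps orbit compactness entirely:
\begin{itemize}
\item If $\delta\le\eta_*$ for all large $t$, Theorem~\ref{rigidity} applies directly.
\item Otherwise it chooses $t_n^+$ with $\delta(u(t_n^+))=\eta_*$ and $0<\delta<\eta_*$ on $(t_n^+,t_n]$, where $\delta(u(t_n))\to0$. It then proves that this particular sequence is precompact in $H^1$ (Proposition~\ref{P:compact}). There, at most one profile fails to scatter, $t_n^1\equiv0$ by $L^4$ decay, and dichotomy and spatial escape are excluded using only the compact-interval embedding of Lemma~\ref{nonlinear-embedding-II} together with blow-up/grow-up of the supercritical limiting profile.
\item The limit solution $w$ is forward global and bounded. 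Since $t_n-t_n^+\to\infty$, it satisfies $0<\delta(w(t))\le\eta_*$ for all $t\ge0$, so $w$ lives entirely in the regime where your Step 4 is valid.
\end{itemize}
Your argument needs some such reduction, or a proof of orbit compactness, to go through.
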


Indeed, if a solution does not blow up in a given time direction, the
continuation criterion implies that it is global in that direction.
If it also does not grow up, then conservation of mass shows that its
$H^1$ norm remains uniformly bounded. After applying time reversal
when necessary, this would contradict Proposition~\ref{prop1.7}.

\subsubsection{Main difficulties}

To facilitate the exposition, we first recall the strategy of Gustafson and Inui \cite{GI} for \eqref{NLS0}. A key ingredient in their argument is the precompactness in \(H^1\), modulo spatial translations, of the forward orbit of an \(H^1\)-bounded threshold solution. This compactness allows one to carry out the modulation and localized virial analysis leading to the classification. For \eqref{NLSV}, however, establishing such precompactness for the entire forward orbit appears substantially more difficult.

The main obstruction arises in the nonlinear profile decomposition. If the spatial parameter of a profile tends to infinity, the potential becomes asymptotically negligible and the limiting dynamics are governed by \eqref{NLS0}. In the threshold scattering argument of \cite{MMZ}, all such limiting profiles lie on the scattering side of the threshold, so they can be embedded globally into solutions of \eqref{NLSV}. In the present setting, by contrast, the dominant profile may lie on the supercritical side and therefore need not scatter, or even be global. A global nonlinear embedding is thus unavailable for this profile. Handling this exceptional profile is the key step in the sequential compactness argument required below.

Our argument avoids this difficulty by requiring only a local-in-time nonlinear embedding for the exceptional profile. The variational estimates together with the decoupling of mass and energy show that at most one profile can fail to scatter. All remaining profiles can therefore be treated by the global nonlinear embedding argument of \cite{MMZ}, while the exceptional profile is approximated only on the fixed compact time interval needed in the sequential compactness argument. This local construction is sufficient for the compactness mechanism used below.

\subsubsection{Strategy of the proof}

To overcome this difficulty, we follow a different strategy from
Gustafson and Inui \cite{GI}. Instead of proving precompactness modulo
spatial translations for the entire forward orbit, we establish
precompactness in $H^1$ only along a suitable sequence of times. This sequential compactness will allow us to extract a forward-global threshold solution that remains in the small modulation regime for all positive times.

The proof of Theorem~\ref{main} proceeds by contradiction. As mentioned before, it is enough to prove Proposition~\ref{prop1.7}. Thus, suppose
that $u$ is a forward-global solution satisfying
\eqref{normalized-threshold} and bounded in $H^1$. We set
\begin{equation}\label{delta-definition}
\delta(f):=\|f\|_{\dot H_V^1}^2-\|Q\|_{\dot H^1}^2,
\qquad \delta(t):=\delta(u(t)).
\end{equation}
Following Du, Wu, and Zhang \cite{DWZ}, a virial argument first yields
a sequence $t_n\to\infty$ such that $\delta(t_n)\to0$.

There is, however, one regime in which we can essentially control the
spatial center, namely, the small modulation regime. When
$0<\delta(t)\ll1$, Proposition~\ref{modulation} shows that the solution
is close in $H^1$ to the orbit of the ground state. In particular, one
can introduce phase, amplitude, and translation parameters whose time
evolutions give control over the spatial center. This motivates our
fundamental strategy: we seek to reduce the problem to the preclusion of
a forward-global threshold solution belonging to the small modulation
regime for all positive times.

\emph{Preclusion of solutions in the small modulation regime.}
To rule out such a solution, we use a modulated virial estimate, that
is, a localized virial estimate centered at the initial position of the
modulated ground state. Together with the modulation estimates, a
bootstrap argument shows that the translation parameter remains within
a bounded distance of its initial position. On the other hand,
Corollary~\ref{sequencec} gives a sequence $t_n\to\infty$ such that
$\delta(t_n)\to0$. The potential-energy estimate from the modulation
analysis then forces the translation parameter to tend to infinity
along this sequence. This contradiction proves Theorem~\ref{rigidity}.

\emph{Reduction to the small modulation regime.}
It remains to show that the solution may be reduced to the preceding
case. If $u$ stays in the small modulation regime after some time
$t_0$, then a time translate of $u$ is ruled out directly by
Theorem~\ref{rigidity}. Otherwise, using the sequence $t_n$ above and a
sequence of times at which $u$ remains a fixed distance from the ground
state orbit, we choose $t_n^+\to\infty$ so that
$\delta(u(t_n^+))=\eta_*$ and $0<\delta(u(t))<\eta_*$ for
$t_n^+<t\leq t_n$.

The main step is to prove that $\{u(t_n^+)\}_n$ is precompact in
$H^1$. Apply the linear profile decomposition to this sequence.
If all associated nonlinear profiles scattered, stability would imply forward scattering of \(u\), which is impossible. On the other hand, the variational estimates and the decoupling of mass and energy show that at most one nonlinear profile can fail to scatter. Hence there is exactly one exceptional profile.
Every remaining profile lies strictly below the
threshold and is treated as in \cite{MMZ}. 
In particular, any of these scattering profiles whose spatial parameters tend to infinity is handled by
Lemma~\ref{nonlinear-embedding-I}. Hence the possibility of several
nonscattering profiles does not arise.


The linear $L^4$ decay rules out $|t_n^1|\to\infty$ for the unique
profile lying on or above the mass--kinetic threshold: otherwise, its
$L^4$ norm tends to zero, and the energy decoupling contradicts the
lower mass--kinetic bound for this profile. If its spatial parameter tends to infinity, the limiting dynamics are governed by \eqref{NLS0}. Unlike the remaining profiles discussed above, this profile may lie on the supercritical side and hence need not scatter or even be global, so the global embedding of  Lemma~\ref{nonlinear-embedding-I} is unavailable. Crucially, however, the compactness argument only requires an approximation on a fixed compact time interval contained in the lifespan of the limiting solution. This is provided by Lemma~\ref{nonlinear-embedding-II}.
The nonlinear profile decomposition and stability then
rule out dichotomy, show that the remainder vanishes, and exclude
spatial escape of the remaining profile. This proves
Proposition~\ref{P:compact}.
Passing to a subsequence, we obtain $u(t_n^+)\to w_0$ in $H^1$. Let
$w$ be the solution with initial data $w_0$. Then the uniform $H^1$ bound for $u$, and the continuation criterion show that $w$ is forward global. Moreover, by a standard stability argument, we can also obtain that $t_n-t_n^+\to\infty$ and  hence
$0<\delta(w(t))\leq\eta_*$ for all $t\geq0$. Thus $w$ is a solution in
the small modulation regime for all positive times, contradicting
Theorem~\ref{rigidity}. This proves Proposition~\ref{prop1.7}; applying
the result to the time-reversed solution completes the proof of
Theorem~\ref{main}.

\subsection{Organization of the paper}

In the rest of the paper, we first collect the preliminary results in
Section~\ref{preli}. In Section~\ref{sec:viride}, we establish the virial
estimates and prove Corollary~\ref{sequencec}. In
Section~\ref{sec:compactness}, we prove the nonlinear embedding lemmas
and Proposition~\ref{P:compact}. The rigidity theorem near the orbit of
$Q$ is proved in Section~\ref{S:rigidity}. Finally,
Section~\ref{S:reduction} completes the proof of Theorem~\ref{main}. The
finite-variance result  is proved in the appendix.

\subsection*{Acknowledgments}
A.~H.~Ardila is partially supported by Universidad del Valle through research project CI-71425. Z. Ma was partially  supported by the NSFC Grant 12271051. Y. Wang is partially supported by the Basque Government through the BERC 2022--2025 program and by the research project PID2024-155550NB-I00 funded by MICIU/AEI/10.13039/501100011033 and FEDER/EU.\,\,
Y. Wang is also supported by a Juan de la Cierva fellowship funded by MICIU/AEI/10.13039/501100011033, under Grant JDC2024-053285-I.

\section{Preliminaries}\label{preli}
For \(s\geq0\) and \(1<r<\infty\), set
\[
 \|f\|_{\dot H_V^{s,r}}=\|H^{s/2}f\|_{L_x^r},
 \qquad
 \|f\|_{H_V^{s,r}}=\|(1+H)^{s/2}f\|_{L_x^r},
\]
and write \(\dot H_V^s=\dot H_V^{s,2}\) and \(H_V^s=H_V^{s,2}\).
We employ the usual $L_t^qL_x^r$ notation for mixed Lebesgue
space-time norms.

We shall use the following equivalence of Sobolev spaces.

\begin{lemma}[Equivalence of Sobolev spaces,
{\cite[Lemma~2.1]{GWY};
\cite[Theorem~1.2]{KMVZZ2}}]\label{sobolev-equivalence}
Let \(0\leq s\leq1\). If \(1<r<\infty\) when \(s=0\), and
\(1<r<3/s\) when \(s>0\), then
\[
 \|(1+H)^{s/2}f\|_{L_x^r}
 \sim \|(1-\Delta)^{s/2}f\|_{L_x^r}.
\]
\end{lemma}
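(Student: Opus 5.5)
The statement is a cited result, so I would reconstruct it from the known kernel bounds and spectral calculus for $H=-\Delta+a|x|^{-\mu}$ with $a>0$. I treat $s=0$, $s=1$, and then interpolate.

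For $s=0$ there is nothing to prove. For $s=1$ with $1<r<3$, I split the equivalence into two inequalities.

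The first is $\|(1+H)^{1/2}f\|_{L^r}\lesssim\|(1-\Delta)^{1/2}f\|_{L^r}$. I would reduce it to boundedness of $\sqrt{1+H}\,(1-\Delta)^{-1/2}$ on $L^r$. Writing
\[
\sqrt{1+H}=\frac{1}{\sqrt{\pi}}\int_0^\infty (1+H)(1+H+\lambda)^{-1}\lambda^{-1/2}\,d\lambda,
\]
the resolvents have Gaussian-dominated heat kernels, since $0\le e^{-tH}(x,y)\le e^{t\Delta}(x,y)$ by Trotter/Feynman--Kac for $V\ge0$. The commutator-type difference between $H$ and $-\Delta$ is the multiplication by $V$. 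So the key estimate is
\[
\|V f\|_{L^r}\cdot(\text{appropriate weight})\ \text{controlled by}\ \|\nabla f\|_{L^r}+\|f\|_{L^r}.
\]
This is a Hardy-type inequality $\||x|^{-\mu}f\|_{L^{r}}\lesssim \|f\|_{H^{1,r}}$ when $\mu\le1$. For $1<\mu\le2$ I would instead argue at the level of $\|(1+H)^{1/2}f\|_{L^r}\sim \|\nabla f\|_{L^r}+\||x|^{-\mu/2}f\|_{L^r}+\|f\|_{L^r}$. The weight $|x|^{-\mu/2}$ with $\mu/2\le1$ is controlled by Hardy's inequality $\||x|^{-1}f\|_{L^r}\lesssim\|\nabla f\|_{L^r}$, which is valid precisely when $r<3$; near the origin $|x|^{-\mu/2}\le|x|^{-1}$, and away from it the weight is bounded. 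This is the source of the restriction $r<3/s$.

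The second inequality is the reverse one, $\|\nabla f\|_{L^r}\lesssim\|(1+H)^{1/2}f\|_{L^r}$. I would obtain it from boundedness of the Riesz transform $\nabla(1+H)^{-1/2}$ on $L^r$ for $1<r<3$. I would prove that using the Gaussian upper bounds together with gradient heat-kernel estimates. For $V\ge0$ these hold because the potential is nonnegative and homogeneous, so a scaling argument reduces to $t=1$. Alternatively, I would use duality plus the square-function characterization by Littlewood--Paley theory adapted to $H$.

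Finally, for $0<s<1$ I would use complex interpolation of the analytic family $(1+H)^{z/2}(1-\Delta)^{-z/2}$ and its inverse. This requires bounded imaginary powers $(1+H)^{i\tau}$ on $L^r$ with polynomial growth in $\tau$, which follow from the Gaussian heat-kernel bounds via the spectral multiplier theorem. Interpolating between $r$-ranges $(1,\infty)$ at $s=0$ and $(1,3)$ at $s=1$ yields $1<r<3/s$.

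The main obstacle is the reverse (Riesz transform) estimate at $s=1$. For $\mu=2$ the potential is critical, so I would first try the known inverse-square results and check that $a>0$ imposes no lower restriction on $r$.
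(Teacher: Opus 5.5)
The paper does not prove this lemma; it imports it from \cite[Lemma~2.1]{GWY} for $1<\mu<2$ and \cite[Theorem~1.2]{KMVZZ2} for $\mu=2$. Your architecture is the standard one and correctly produces the range $1<r<3/s$: treat $s=0$ and $s=1$, then apply Stein interpolation to $(1+H)^{z/2}(1-\Delta)^{-z/2}$ and its inverse, controlling the imaginary powers by Gaussian bounds and spectral multipliers. The problems are in both halves of the $s=1$ step.

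\emph{The Riesz-transform half.} The pointwise bounds $|\nabla_x e^{-tH}(x,y)|\lesssim t^{-2}e^{-c|x-y|^2/t}$ that you invoke are false for these potentials. Take $f=(1+H)^{-1}\phi$ with $0\le\phi\in C_c^\infty$, $\phi\neq0$. Near the origin both $f$ and the heat kernel behave like $c\,(1+c'|x|^{2-\mu})$ with $c>0$ when $\mu<2$. When $\mu=2$ they behave like $|x|^{-\sigma}$ with $\sigma=\tfrac12-\tfrac12\sqrt{1+4a}$. So gradients blow up like $|x|^{1-\mu}$, resp.\ $|x|^{-\sigma-1}$ when $0<a<2$. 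Your bounds would make $\nabla(1+H)^{-1/2}$ bounded on every $L^p$. But $(1+H)^{1/2}f=(1+H)^{-1/2}\phi\in\bigcap_pL^p$, while $\nabla f\notin L^p$ for $p\geq 3/(\mu-1)$. Your scaling reduction to $t=1$ also only works for $\mu=2$: for $\mu<2$, rescaling changes $a$. What is actually available is the following. For $1<r\le2$, boundedness follows from the $L^2$ bound $\|\nabla f\|_{L^2}^2\le\langle(1+H)f,f\rangle$ plus Gaussian upper bounds (weak type $(1,1)$), for any $V\ge0$, with no gradient kernel bounds. For $2<r<3$ you need an argument that uses the local behavior of $V$ and works only in a bounded range. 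Examples are Shen's reverse-H\"older method when $\mu<2$, which gives $1<r<3/(\mu-1)$, and the inverse-square analysis of \cite{KMVZZ2} when $\mu=2$.

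\emph{The half $\|(1+H)^{1/2}f\|_{L^r}\lesssim\|(1-\Delta)^{1/2}f\|_{L^r}$.} Here you abandon the resolvent comparison and postulate $\|(1+H)^{1/2}f\|_{L^r}\sim\|\nabla f\|_{L^r}+\||x|^{-\mu/2}f\|_{L^r}+\|f\|_{L^r}$. This is automatic only at $r=2$; for $r\neq2$ it is the lemma in disguise. Its upper bound would come from duality against $\nabla(1+H)^{-1/2}$ and $V^{1/2}(1+H)^{-1/2}$ on $L^{r'}$. As $r\downarrow1$ this needs these operators on $L^{r'}$ with $r'\to\infty$, where the example above shows they are unbounded; for $\mu<2$, $V^{1/2}f\sim|x|^{-\mu/2}$ already fails once $r'\geq6/\mu$. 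Within your interpolation scheme you cannot avoid $r$ near $1$ at $s=1$, since it is needed to reach $r$ near $1$ for $0<s<1$. The missing idea is to finish the comparison you began, using positivity. The resolvent identity gives
\[
 (1+H)^{1/2}-(1-\Delta)^{1/2}
 =\frac1\pi\int_0^\infty\lambda^{1/2}(1+H+\lambda)^{-1}V(1-\Delta+\lambda)^{-1}\,d\lambda,
\]
and $V\ge0$ gives $0\le(1+H+\lambda)^{-1}(x,y)\le(1-\Delta+\lambda)^{-1}(x,y)$. Hence $|[(1+H)^{1/2}-(1-\Delta)^{1/2}](1-\Delta)^{-1/2}g|$ is dominated pointwise by a positive operator applied to $|g|$. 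That operator is built only from free resolvents, the bound $V\le a(|x|^{-2}+1)$, and the Bessel potential. Its constant part is a multiple of $(1-\Delta)^{-1}$. By Schur-type estimates, its $|x|^{-2}$ part has $L^r$ norm $\lesssim\||x|^{-1}(1-\Delta)^{-1/2}|g|\|_{L^r}$. Hardy's inequality $\||x|^{-1}h\|_{L^r}\lesssim\|\nabla h\|_{L^r}$ then closes the estimate exactly for $1<r<3$, with no Riesz-transform input.
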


We will also use the estimate
\begin{equation}\label{weighted-hardy}
 \int_{\R^3}V(x+y)|f(x)|^2\,dx
 \lesssim
 \|f\|_{\dot H^1}^{\mu}\|f\|_{L_x^2}^{2-\mu},
 \qquad y\in\R^3,
\end{equation}
which follows from Hardy's inequality and interpolation; see
\cite[Lemma~2.6]{AHI}. In particular,
\(\|f\|_{L_x^2}+\|f\|_{\dot H_V^1}\) is equivalent to the usual
\(H^1\)-norm.

\subsection{Local theory}

We recall the local well-posedness and stability theory for
\eqref{NLSV}.

\begin{proposition}[Well-posedness,
{\cite[Theorem~1.1]{GWY};
\cite[Theorem~2.15 and Remark~2.16]{KMVZ}}]\label{local-theory}
Suppose \(V(x)=a|x|^{-\mu}\), where \(a>0\) and \(1<\mu\leq2\).
\begin{itemize}
\item For any initial data \(u_0\in H^1(\R^3)\), there exists a unique
maximal-lifespan solution \(u\) to \eqref{NLSV}. Any solution that
remains uniformly bounded in \(H^1\) throughout its lifespan is global
in time.
\item Additionally, if
\[
 \|e^{-itH}u_0\|_{L_{t,x}^5((0,\infty)\times\R^3)}
\]
is sufficiently small, then the solution with data \(u_0\) is
forward-global and obeys \(L_{t,x}^5\)-bounds forward in time.
\item More generally, any \(H^1\) solution that remains uniformly
bounded in \(L_{t,x}^5\) throughout its lifespan is global, and global
\(L_{t,x}^5\)-bounds imply scattering.
\item Finally, given any $\psi\in H^1(\R^3)$, we may construct a
solution to \eqref{NLSV} on some interval $(T,\infty)$ that scatters to
$\psi$ in $H^1$.
\end{itemize}
Analogous statements hold backward in time, as well.
\end{proposition}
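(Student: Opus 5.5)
The plan is a standard Strichartz contraction argument. It transfers from the free case once two linear inputs are in place for $H=-\Delta+V$. \emph{First}, Strichartz estimates for $e^{-itH}$ on $\R^3$ at all admissible pairs $(q,r)$, $\frac2q+\frac3r=\frac32$, $2\le r\le 6$, including the endpoint and the inhomogeneous versions. For $\mu=2$ these are the estimates of Burq--Planchon--Stalker--Tahvildar-Zadeh. For $1<\mu<2$ and $a>0$, I would take global-in-time Strichartz estimates from the literature on repulsive inverse-power potentials. These rely on the absence of zero resonances and eigenvalues, since $H\ge0$ and $x\cdot\nabla V\le0$, together with a Kato smoothing / local decay argument. \emph{Second}, Lemma~\ref{sobolev-equivalence}, which lets me commute derivatives past the nonlinearity: $\|(1+H)^{1/2}(|u|^2u)\|_{L^r}\sim\|\langle\nabla\rangle(|u|^2u)\|_{L^r}$ for $r<3$. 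The ordinary fractional product rule then applies.

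For the first two bullets I would work in the space $X(I)$ defined by the norm $\|\langle\nabla\rangle u\|_{L_t^{q}L_x^{r}}$ over admissible pairs. In particular it contains $\|\langle\nabla\rangle u\|_{L_t^{5}L_x^{30/11}}$ and, by Sobolev embedding, $\|u\|_{L^5_{t,x}}$. I consider the Duhamel map
\[
\Phi(u)(t)=e^{-itH}u_0+i\int_0^t e^{-i(t-s)H}|u|^2u(s)\,ds .
\]
By Strichartz, Lemma~\ref{sobolev-equivalence} and Hölder,
\[
\|\langle\nabla\rangle(|u|^2u)\|_{L_t^{5/3}L_x^{30/23}}\lesssim\|u\|_{L^5_{t,x}}^2\,\|\langle\nabla\rangle u\|_{L_t^5L_x^{30/11}} .
\]
All exponents are below $3$, so the equivalence applies. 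Hence $\Phi$ is a contraction on a ball of $X(I)$ as soon as $\|e^{-itH}u_0\|_{L^5_{t,x}(I)}$ is small. That smallness holds on a short interval by dominated convergence, or on $(0,\infty)$ under the second bullet's hypothesis. This gives existence, uniqueness, forward-global existence in the small case, and maximal solutions.

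The cubic problem in 3D is $\dot H^{1/2}$-critical, hence $H^1$-subcritical. Using $\|u\|_{L^5_{t,x}(I)}\lesssim|I|^{1/10}\|u\|_{L^\infty_tH^1}\cdots$ (interpolating Strichartz with $L^\infty_tH^1$), the local existence time depends only on $\|u_0\|_{H^1}$. Therefore an $H^1$-bounded solution can be continued indefinitely, which gives the blow-up alternative in the first bullet. For the third bullet, a finite $L^5_{t,x}$ norm on the lifespan can be split into finitely many intervals on which it is small. On each piece the estimate above bounds $\|\langle\nabla\rangle u\|$ in the Strichartz norm, so the $H^1$ norm stays bounded and the solution is global. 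Finiteness of $\|\langle\nabla\rangle(|u|^2u)\|_{L^{5/3}_tL^{30/23}_x(\R_+)}$ then shows that $e^{itH}u(t)$ is Cauchy in $H^1$, which is scattering. Here I use the $H_V^1\simeq H^1$ equivalence to move between the unitary group and $H^1$.

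For the last bullet I would solve the final-state problem by the same contraction, on $(T,\infty)$, for the map
\[
u\mapsto e^{-itH}\psi-i\int_t^\infty e^{-i(t-s)H}|u|^2u(s)\,ds .
\]
Choosing $T$ large makes $\|e^{-itH}\psi\|_{L^5_{t,x}(T,\infty)}$ small, and the fixed point scatters to $\psi$. The backward-in-time statements follow by time reversal, since $\bar u(-t)$ solves the same equation. The main obstacle is the linear input, namely global Strichartz estimates for $e^{-itH}$ when $V=a|x|^{-\mu}$ with $1<\mu<2$. This potential is neither scaling-critical nor short-range in the usual $L^{3/2}$ sense near infinity. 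I would first try to import them from existing dispersive results for repulsive long-range-type potentials. Failing that, I would prove them by combining a Morawetz/Kato-smoothing estimate, derived from the repulsivity $x\cdot\nabla V\le0$, with the free Strichartz estimates via a perturbative Duhamel argument in the style of Rodnianski--Schlag.
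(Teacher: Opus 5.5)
Your proposal is correct and matches the argument behind the results the paper cites for this proposition; the paper itself gives no proof. That argument is a standard Strichartz contraction for $e^{-itH}$, with the linear estimates imported (Burq--Planchon--Stalker--Tahvildar-Zadeh for $\mu=2$; Mizutani's global Strichartz estimates for repulsive slowly decaying potentials for $1<\mu<2$, as in \cite{GWY}) and derivatives moved across the nonlinearity by Lemma~\ref{sobolev-equivalence}.

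Two small cautions. First, build $X(I)$ from $(1+H)^{1/2}$ or only from pairs with $r<3$ (as in $S^1(I)$), since that lemma is available only for $r<3$ when $s=1$. Second, your Rodnianski--Schlag fallback would not work as stated for $1<\mu<2$: by scaling, $\||x|^{-\mu/2}e^{it\Delta}f\|_{L^2_{t,x}}\lesssim\|f\|_{L^2}$ fails at low frequencies, so the imported non-perturbative estimates are genuinely needed.
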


For the stability results, we introduce the notation
\[
 \dot N^{1/2}(I)
 :=L_t^{10/7}\dot H_x^{1/2,10/7}
  +L_t^{5/3}\dot H_x^{1/2,30/23}
  +L_t^1\dot H_x^{1/2},
\]
\[
 N^{1/2}(I)
 :=L_t^{10/7}H_x^{1/2,10/7}
  +L_t^{5/3}H_x^{1/2,30/23}
  +L_t^1H_x^{1/2},
\]
and
\[
 \dot S^s(I)=L_t^\infty\dot H_V^s\cap L_t^2\dot H_V^{s,6},
 \qquad
 S^s(I)=L_t^\infty H_V^s\cap L_t^2H_V^{s,6},
\]
with all space-time norms over $I\times\R^3$. We obtain the following.

\begin{lemma}[Stability for the inverse-square potential,
{\cite[Theorem~2.17]{KMVZ}}]\label{stability}
Let \(V(x)=a|x|^{-2}\), where \(a>0\), and let
\(I\subset\R\) be an interval containing \(t_0\). Suppose that
\(\widetilde v:I\times\R^3\to\C\) solves
\[
 (i\partial_t-H)\widetilde v
 =-|\widetilde v|^2\widetilde v+e,
 \qquad \widetilde v(t_0)=\widetilde v_0\in H^1,
\]
where $e:I\times\R^3\to\C$.
Let \(v_0\in H^1\), and suppose that
\[
 \|v_0\|_{H^1}+\|\widetilde v_0\|_{H^1}\leq E,
 \qquad
 \|\widetilde v\|_{L_{t,x}^5(I\times\R^3)}\leq L
\]
for some \(E,L>0\). There exists
\(\varepsilon_0=\varepsilon_0(E,L)>0\) such that, if
\(0<\varepsilon<\varepsilon_0\) and
\[
 \|\widetilde v_0-v_0\|_{\dot H^{1/2}}
 +\|e\|_{\dot N^{1/2}(I)}<\varepsilon,
\]
then there exists a solution \(v:I\times\R^3\to\C\) to \eqref{NLSV}
with \(v(t_0)=v_0\) satisfying
\[
 \|v-\widetilde v\|_{\dot S^{1/2}(I)}
 \lesssim_{E,L}\varepsilon,
 \qquad
 \|v\|_{S^1(I)}\lesssim_{E,L}1.
\]
\end{lemma}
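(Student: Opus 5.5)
The plan is the standard Strichartz-based perturbation argument, first on intervals where $\widetilde v$ has small $L^5_{t,x}$ norm and then iterated. The ingredients specific to $H=-\Delta+a|x|^{-2}$ are three. First, Strichartz estimates for $e^{-itH}$, which hold for $a>-\tfrac14$ and in particular for $a>0$. Second, the equivalence of Sobolev norms in Lemma~\ref{sobolev-equivalence} at $s=\tfrac12$ and $s=1$. Third, the fractional product and chain rules for the free derivative $|\nabla|^{1/2}$. The exponents appearing in $\dot N^{1/2}$ and $\dot S^{1/2}$ are all chosen so that $r<3/s$. This lets one pass freely between $H^{1/4}$ and $|\nabla|^{1/2}$ in the relevant $L^r$ spaces, so the dual Strichartz estimate
\[
\Bigl\|\int_{t_0}^t e^{-i(t-s)H}F(s)\,ds\Bigr\|_{\dot S^{1/2}(I)}\lesssim \|F\|_{\dot N^{1/2}(I)}
\]
is available, with each summand of $\dot N^{1/2}$ being a dual admissible pair.

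\emph{Step 1: a priori bounds on $\widetilde v$.} Split $I$ into $J\sim (L/\eta)^5$ subintervals $I_j$ with $\|\widetilde v\|_{L^5_{t,x}(I_j)}\le\eta$. On each $I_j$, Strichartz together with the fractional product rule gives
\[
\|\widetilde v\|_{\dot S^{1/2}(I_j)}\lesssim \|\widetilde v(t_j)\|_{\dot H^{1/2}}+\eta^2\|\widetilde v\|_{\dot S^{1/2}(I_j)}+\varepsilon .
\]
This uses the $L^5_{t,x}$ norm twice and a $\dot S^{1/2}$ norm once in $\||\nabla|^{1/2}(|\widetilde v|^2\widetilde v)\|_{L^{10/7}_{t,x}}$, with $L^5_{t,x}$ controlled by the $\dot S^{1/2}$ norm through Sobolev embedding. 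Summing over the intervals gives $\|\widetilde v\|_{\dot S^{1/2}(I)}\lesssim_{E,L}1$. The same argument at the level $s=1$ gives $\|\widetilde v\|_{S^1(I)}\lesssim_{E,L}1$, using mass and energy-type control from $E$. If needed, the error term can be absorbed into the $N^{1/2}$ framework, with the $H^1$ bound coming from interpolation.

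\emph{Step 2: short-time stability.} Write $v=\widetilde v+w$, so that $w$ solves
\[
(i\partial_t-H)w=-\bigl(|\widetilde v+w|^2(\widetilde v+w)-|\widetilde v|^2\widetilde v\bigr)-e .
\]
On an interval $I_j$ where $\|\widetilde v\|_{\dot S^{1/2}(I_j)}\le\eta$, Strichartz yields
\[
\|w\|_{\dot S^{1/2}(I_j)}\lesssim \|w(t_j)\|_{\dot H^{1/2}}+\varepsilon+\eta^2\|w\|_{\dot S^{1/2}}+\|w\|_{\dot S^{1/2}}^3 .
\]
The nonlinear difference is estimated with the fractional chain rule for the cubic nonlinearity, placing $|\nabla|^{1/2}$ on each factor and using $L^5_{t,x}$ and $L^{10/3}_t\dot H^{1/2,\,30/13}$-type norms for the others. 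A continuity argument then gives $\|w\|_{\dot S^{1/2}(I_j)}\lesssim\|w(t_j)\|_{\dot H^{1/2}}+\varepsilon$ provided the right-hand side is small. The existence of $v$ on all of $I$ follows from the local theory (Proposition~\ref{local-theory}) together with this a priori control, since the $L^5_{t,x}$ norm stays bounded.

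\emph{Step 3: iteration and the $S^1$ bound.} Iterating over the $J=J(E,L)$ intervals, the error grows by a fixed factor $C$ at each step. Choosing $\varepsilon_0(E,L)$ so small that $C^J\varepsilon_0$ stays below the smallness threshold gives $\|v-\widetilde v\|_{\dot S^{1/2}(I)}\lesssim_{E,L}\varepsilon$. Finally, $\|v\|_{L^5_{t,x}(I)}\lesssim_{E,L}1$, so rerunning the Strichartz argument for $v$ itself at the $H^1$ level yields $\|v\|_{S^1(I)}\lesssim_{E,L}1$; here $v_0\in H^1$ with $\|v_0\|_{H^1}\le E$ and the subinterval decomposition is now adapted to $v$.

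I expect the main obstacle to be the fractional calculus with respect to $H$. The chain and product rules are available for $|\nabla|^{1/2}$ but not directly for $H^{1/4}$. The exponent ranges in Lemma~\ref{sobolev-equivalence}, namely $r<6$ at $s=\tfrac12$ and $r<3$ at $s=1$, must therefore be checked for every Strichartz and dual pair used. This is precisely why the $\dot N^{1/2}$ space is built from the specific pairs $(10/7,10/7)$, $(5/3,30/23)$ and $(1,2)$, and why the estimate is carried out at regularity $\tfrac12$ rather than $1$: at $s=1$ the endpoint $L^2_t\dot H^{1,6}$ is excluded because $6>3$.
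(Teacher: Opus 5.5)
Your strategy is the standard Strichartz perturbation argument that underlies \cite[Theorem~2.17]{KMVZ}; the paper does not reprove this lemma and only quotes it. The outline is sound, but Step~2 fails as written. In Step~2 you work on intervals with $\|\widetilde v\|_{\dot S^{1/2}(I_j)}\le\eta$, and such a partition does not exist in general. The reason is that $\dot S^{1/2}$ contains $L_t^\infty\dot H_V^{1/2}$, which does not shrink on short intervals: if $\|\widetilde v(t)\|_{\dot H^{1/2}}>\eta$ for all $t$, no subinterval qualifies, so the $J(E,L)$ intervals of Step~3 are never produced.

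The fix is to require smallness only of the norms of $\widetilde v$ that actually enter the nonlinear estimate. These are $L_{t,x}^5$ and $L_t^{10/3}\dot H_x^{1/2,10/3}$, the admissible pair matching $L_{t,x}^{10/7}$; your $L_t^{10/3}\dot H^{1/2,30/13}$ is not admissible. Both norms have finite time exponent, and both are bounded on all of $I$ by Step~1 together with the equivalence of Sobolev spaces ($10/3<6$). Hence $I$ splits into $J(E,L,\eta)$ intervals on which $\|\widetilde v\|_{L_{t,x}^5(I_j)}+\||\nabla|^{1/2}\widetilde v\|_{L_{t,x}^{10/3}(I_j)}\le\eta$. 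On these intervals the terms $\widetilde v^2w$, $\widetilde v w^2$, $w^3$ contribute $\eta^2\|w\|$, $\eta\|w\|^2$, $\|w\|^3$ in $\dot S^{1/2}$, and your continuity argument and iteration then go through unchanged.

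Step~1 also contains a claim you cannot prove, namely $\|\widetilde v\|_{S^1(I)}\lesssim_{E,L}1$. The error $e$ is controlled only in $\dot N^{1/2}$, an approximate solution has no energy control, and interpolation cannot produce an $H^1$ bound from $\dot H^{1/2}$ information. The claim is also unnecessary. The $S^1$ bound in the conclusion concerns $v$, and your Step~3 derives it correctly from $\|v\|_{L_{t,x}^5(I)}\lesssim_{E,L}1$ and $\|v_0\|_{H^1}\le E$, for instance using the pairs $(5,30/11)$ and $(5/3,30/23)$, both in the range $r<3$. Simply delete the claim.

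There are two smaller inaccuracies. First, the component $L_t^2\dot H_V^{1/2,6}$ of $\dot S^{1/2}$ sits exactly at the excluded endpoint $r=3/s$. This is harmless only because that component is never converted to free derivatives. Second, your closing explanation for working at regularity $\frac12$ is inconsistent with your own Step~3, which runs at $s=1$. The actual point is that in the applications the errors, for example those coming from $e^{-itH}r_n^J$, are small only in $L_{t,x}^5$-type norms. They can therefore be made small in $\dot N^{1/2}$, via interpolation as in the proof of Proposition~\ref{P:compact}, but not at the $H^1$ level.
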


\begin{lemma}[Stability for inverse-power potentials,
{\cite[Lemma~2.3]{GWY}}]\label{stability-inverse-power}
Let \(V(x)=a|x|^{-\mu}\), where \(a>0\) and \(1<\mu<2\), and let
\(I\subset\R\) be an interval containing \(t_0\). Suppose that
\(\widetilde v:I\times\R^3\to\C\) solves
\[
 (i\partial_t-H)\widetilde v
 =-|\widetilde v|^2\widetilde v+e,
 \qquad \widetilde v(t_0)=\widetilde v_0\in H^1,
\]
where $e:I\times\R^3\to\C$.
Let \(v_0\in H^1\), and suppose that
\[
 \|v_0\|_{H^1}+\|\widetilde v_0\|_{H^1}\leq E,
 \qquad
 \|\widetilde v\|_{L_{t,x}^5(I\times\R^3)}\leq L
\]
for some \(E,L>0\). There exists
\(\varepsilon_0=\varepsilon_0(E,L)>0\) such that, if
\(0<\varepsilon<\varepsilon_0\) and
\[
 \|\widetilde v_0-v_0\|_{H^{1/2}}
 +\|e\|_{N^{1/2}(I)}<\varepsilon,
\]
then there exists a solution \(v:I\times\R^3\to\C\) to \eqref{NLSV}
with \(v(t_0)=v_0\) satisfying
\[
 \|v-\widetilde v\|_{S^{1/2}(I)}
 \lesssim_{E,L}\varepsilon,
 \qquad
 \|v\|_{S^1(I)}\lesssim_{E,L}1.
\]
\end{lemma}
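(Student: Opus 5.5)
The plan is the standard perturbative argument built on Strichartz estimates for $e^{-itH}$. I would first prove a short-time version in which $\|\widetilde v\|_{L_{t,x}^5(I\times\R^3)}\leq\eta$ for a small absolute $\eta$, and then obtain the general statement by splitting $I$ into $J\sim(L/\eta)^5$ subintervals and iterating.

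\textbf{Linear inputs.}
\begin{itemize}
\item For $V=a|x|^{-\mu}$ with $a>0$ and $1<\mu<2$, the propagator satisfies the full range of Strichartz estimates, including inhomogeneous ones with dual exponents. This is standard for these repulsive potentials (Kato smoothing plus Hardy-type bounds) and is used in \cite{GWY}.
\item Lemma~\ref{sobolev-equivalence} lets me move between $(1+H)^{1/4}$ and $(1-\Delta)^{1/4}$ in $L^r$ for $1<r<6$. It also gives the same equivalence for $s=1$, $r<3$.
\end{itemize}
Consequently, every spacetime norm in $S^{1/2}$, $S^1$ and $N^{1/2}$ may be computed with Euclidean derivatives. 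There the fractional chain and Leibniz rules are available for the cubic nonlinearity. I work with inhomogeneous norms because $\mu<2$ breaks scaling; the low-frequency part is controlled by mass.

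\textbf{Short-time step.} Write $v=\widetilde v+w$, so that
\[
(i\partial_t-H)w=-\bigl(|\widetilde v+w|^2(\widetilde v+w)-|\widetilde v|^2\widetilde v\bigr)-e,
\qquad w(t_0)=v_0-\widetilde v_0 .
\]
\begin{enumerate}
\item I first show that $\|\widetilde v\|_{S^1(I)}\lesssim E$. This comes from Strichartz applied to the $\widetilde v$-equation, the smallness of $\|\widetilde v\|_{L^5}$, and a continuity argument. It needs the error term bounded in a dual $H^1$ Strichartz norm; if only the $N^{1/2}$ smallness is assumed, I would instead run the argument at $s=1/2$ and recover the $S^1$ bound for $v$ afterwards by persistence of regularity.
\item Next I control $\|w\|_{S^{1/2}(I)}$ by Strichartz, obtaining
\[
\|w\|_{S^{1/2}}\lesssim \|w(t_0)\|_{H^{1/2}}+\|e\|_{N^{1/2}}+\|\langle\nabla\rangle^{1/2}(\text{difference of nonlinearities})\|_{N^{1/2}} .
\]
\item For the last term I use the fractional product rule and the Sobolev embedding $H^{1/2,30/11}\subset L^5$. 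This bounds it by $(\|\widetilde v\|_{L^5}^2+\|w\|_{L^5}^2)\|w\|_{S^{1/2}}$ plus similar terms. These are small-times-$\|w\|_{S^{1/2}}$ or superlinear in $w$.
\item The $L^5$ norm of $\widetilde v$ enters through the $L^5$-based pieces of the $N^{1/2}$ norm. The dual exponents $10/7$ and $30/23$ are admissible, and the relevant Lebesgue indices lie in $(1,6)$, so Lemma~\ref{sobolev-equivalence} applies.
\item A continuity argument in the length of the time interval gives $\|w\|_{S^{1/2}}\lesssim\varepsilon$. It also gives the estimate $\|v\|_{S^1}\lesssim 1$, by the same argument at $s=1$ with the $H^1$ bound $E$.
\end{enumerate}

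\textbf{Iteration.} On each successive subinterval, the new initial discrepancy is at most $C\varepsilon$ from the previous step. After $J$ steps the error is $C^{J}\varepsilon$, which is why $\varepsilon_0$ depends on $(E,L)$. Throughout, the $S^1$ bound on $v$ stays $\lesssim_{E,L}1$.

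\textbf{Main obstacle.} I expect the hardest step to be the justification of the fractional calculus for $H^{1/2}$. Derivatives adapted to $H$ do not obey Leibniz rules directly, so every estimate must pass through Lemma~\ref{sobolev-equivalence}. The concern is to check that all exponents stay within its admissible range ($r<6$ for $s=1/2$, $r<3$ for $s=1$) while still closing the nonlinear estimates. A secondary point is making sure the Strichartz estimates for $e^{-itH}$ include the endpoint pair $L_t^2L_x^6$ used in $S^s$. I would take this from the literature for repulsive inverse-power potentials rather than reprove it.
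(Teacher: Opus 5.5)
Your proposal is correct and follows the standard perturbative scheme behind the cited \cite[Lemma~2.3]{GWY} (and \cite[Theorem~2.17]{KMVZ} in the inverse-square case), which the paper imports without proof. That scheme is a short-time perturbation at the $H^{1/2}$ level in inhomogeneous norms, iteration over $O((L/\eta)^5)$ subintervals, and persistence of regularity.

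One correction: under the stated hypotheses your first version of step 1 is unavailable, because $e$ is controlled only in $N^{1/2}(I)$ and $\widetilde v$ is known to lie in $H^1$ only at $t_0$. So the branch you call a fallback is the one you must use: bound $\widetilde v$ in $S^{1/2}(I)$, then deduce $\|v\|_{S^1(I)}\lesssim_{E,L}1$ from $\|v\|_{L_{t,x}^5}\leq L+C\varepsilon$ and $\|v_0\|_{H^1}\leq E$.
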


\subsection{Concentration-compactness}

In this section we import a linear profile decomposition adapted to the
$H^1\to L_{t,x}^5$ Strichartz estimate for $e^{-itH}$. This
decomposition plays a key role in establishing compactness below.

We will use the following consequence of dispersion to rule out profiles
whose time parameters tend to infinity.

\begin{lemma}[{\cite[Lemma~2.8]{GWY};
\cite[Corollary~2.13]{KMVZ}}]
\label{linear-L4-decay}
Let $\phi\in H^1(\R^3)$, let $t_n\to\pm\infty$, and let
$\{x_n\}\subset\R^3$. Then
\[
 \bigl\|e^{it_nH}[\phi(\cdot-x_n)]\bigr\|_{L_x^4}
 \longrightarrow0.
\]
\end{lemma}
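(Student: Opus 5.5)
The plan is to reduce, by density and the uniform bound, to the case $\phi\in C_c^\infty(\R^3)$, and then use a dispersive estimate for $e^{itH}$ together with a Sobolev/interpolation argument. First, the family of operators $f\mapsto e^{it_nH}[f(\cdot-x_n)]$ is uniformly bounded from $H^1$ to $L_x^4$. By Lemma~\ref{sobolev-equivalence} and conservation of $\|(1+H)^{1/2}\cdot\|_{L^2}$ under $e^{itH}$, we have
\[
\|e^{it_nH}[f(\cdot-x_n)]\|_{L^4}\lesssim \|e^{it_nH}[f(\cdot-x_n)]\|_{H^1}\sim\|(1+H)^{1/2}[f(\cdot-x_n)]\|_{L^2}\lesssim \|f(\cdot-x_n)\|_{H^1}=\|f\|_{H^1}.
\]
The step $\|(1+H)^{1/2}g\|_{L^2}^2=\|g\|_{L^2}^2+\|g\|_{\dot H^1_V}^2\lesssim\|g\|_{H^1}^2$ uses \eqref{weighted-hardy} with the shift $y$, which is uniform in translations. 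Hence it suffices to prove the claim for $\phi$ in a dense class, and an $\varepsilon/3$ argument then finishes.

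For $\phi\in C_c^\infty$ (so $\phi\in L^{4/3}\cap H^1$), interpolate between the $L^2$ bound and a decay bound. The key input is the dispersive estimate
\[
\|e^{itH}g\|_{L^\infty}\lesssim|t|^{-3/2}\|g\|_{L^1}
\]
for $H=-\Delta+a|x|^{-\mu}$ with $a>0$, $1<\mu\le2$. This is known for the inverse-square case and for repulsive inverse-power potentials in the cited literature (\cite{GWY,KMVZ}). Riesz--Thorin then gives
\[
\|e^{it_nH}[\phi(\cdot-x_n)]\|_{L^4}\lesssim|t_n|^{-3/4}\|\phi\|_{L^{4/3}}\to0,
\]
uniformly in $x_n$, since translation preserves $L^{4/3}$ norms.

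The main obstacle is that a full $L^1\to L^\infty$ dispersive bound for these potentials may not be available as a citable black box. If it is not, I would instead use Strichartz estimates, which hold for $e^{-itH}$ (this is the $L^5_{t,x}$ theory underlying Proposition~\ref{local-theory}). First reduce to $\phi$ with $\|\phi\|_{\dot H^{1/2+}}$-type regularity and use the local-smoothing/Strichartz bound $\|e^{itH}\psi\|_{L^5_tL^{30/11}_x}\lesssim\|\psi\|_{L^2}$, together with $\|e^{itH}\psi\|_{L^\infty_tH^1}\lesssim\|\psi\|_{H^1}$.

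To pass from integrated decay to pointwise-in-time decay, compare with the free flow. Write $e^{itH}=e^{it(-\Delta)}+\text{(Duhamel term)}$, where the Duhamel term is $-i\int_0^t e^{i(t-s)H}Ve^{-is\Delta}\,ds$ applied to the data (sign conventions adjusted). The free part decays in $L^4$ by the free dispersive estimate, uniformly in the translation. For the Duhamel term, the translated free solution $e^{-is\Delta}[\phi(\cdot-x_n)]$ is concentrated near $x_n$ for bounded $s$ and dispersed for large $s$. Hence $\|Ve^{-is\Delta}\phi(\cdot-x_n)\|$ in a dual Strichartz norm over $s\in\R$ is finite, since $V\in L^{3/2}+L^\infty$-type decay with $\mu>1$ gives integrability. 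Writing $e^{it_nH}\phi_n=e^{it_nH}\Omega$-type wave-operator decomposition, the tail contributions from $|s|>T$ are small. The contribution from $|s|\le T$ is a fixed $H^1$ function propagated for time $t_n-s\to\infty$, which reduces to the scattering-state statement: there is $\psi$ with $e^{itH}g-e^{-it\Delta}\psi\to0$ in $H^1$, because $V$ is short range for $\mu>1$ and wave operators exist and are asymptotically complete (no bound states since $V\ge0$). The final step then follows from the free $L^4$ decay and the Sobolev embedding $H^1\hookrightarrow L^4$.
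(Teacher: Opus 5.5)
You have a genuine gap in the fallback argument. Your density reduction and the uniform $H^1\to L^4$ bound are correct, and the dispersive route works for $\mu=2$, where the $|t|^{-3/2}$ bound for $-\Delta+a|x|^{-2}$ with $a>0$ is available. For $1<\mu<2$, however, the sources the paper imports this lemma from do not supply an $L^1\to L^\infty$ estimate; the linear theory there rests on Strichartz estimates. So your fallback has to carry that range, and as written it breaks exactly where the lemma has content. With $f_n=\phi(\cdot-x_n)$ and $t_n>T>0$,
\[
 e^{it_nH}f_n=e^{-it_n\Delta}f_n+e^{it_nH}h_n+e^{it_nH}k_n,\qquad
 h_n=i\int_0^Te^{-isH}Ve^{-is\Delta}f_n\,ds,\quad
 k_n=i\int_T^{t_n}e^{-isH}Ve^{-is\Delta}f_n\,ds.
\]
Your translation-invariant Cook-type bound (using $\mu>1$) does make $k_n$ small uniformly in $x_n$. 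But you then treat $h_n$ as ``a fixed $H^1$ function,'' and it is not. In fact $h_n=(1-e^{-iTH}e^{-iT\Delta})f_n$ depends on $n$ through $x_n$, and $e^{itH}$ does not commute with translations. Asymptotic completeness applied to one vector therefore says nothing about $\|e^{it_nH}h_n\|_{L^4}$. For a fixed center the lemma already follows from asymptotic completeness alone; uniformity in $x_n$ is the real content, and your argument does not supply it.

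To close the gap, pass to a subsequence with $x_n\to x_\infty$ or $|x_n|\to\infty$. In the first case $h_n$ converges in $H^1$, and decay for the single limit vector suffices. In the second case you must show $\|h_n\|_{L^2}\to0$. Dual Strichartz bounds it by
\[
 \|(V\mathbf{1}_{|x|<1})(\cdot+x_n)\,e^{-is\Delta}\phi\|_{L^2_sL^{6/5}_x([0,T])}
 +\|(V\mathbf{1}_{|x|\geq1})(\cdot+x_n)\,e^{-is\Delta}\phi\|_{L^1_sL^2_x([0,T])},
\]
which tends to zero by dominated convergence: on the compact interval $[0,T]$ the translated potential moves away from where $e^{-is\Delta}\phi$ lives.

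Two further points need to be made explicit. First, dual Strichartz controls $h_n$ and $k_n$ only in $L^2$; derivative estimates would hit the singularity of $V$ when $\mu$ is near $2$. You get $L^4$ by interpolating $\|g\|_{L^4}\lesssim\|g\|_{L^2}^{1/4}\|g\|_{H^1}^{3/4}$ with the uniform $H^1$ bound coming from $\int_T^{t}e^{-isH}Ve^{-is\Delta}f\,ds=i\bigl(e^{-itH}e^{-it\Delta}-e^{-iTH}e^{-iT\Delta}\bigr)f$. Second, asymptotic completeness must be cited (it does hold here), or replaced by an elementary argument. For fixed $h\in H^1$, the bound $\|(e^{i\tau H}-1)h\|_{L^2}\lesssim|\tau|^{1/2}\|h\|_{H^1}$ plus interpolation makes $t\mapsto\|e^{itH}h\|_{L^4}$ uniformly continuous. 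Strichartz puts it in $L^{8/3}_t$, so it tends to zero.
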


\begin{proposition}[Linear profile decomposition,
{\cite[Lemma~2.12]{GWY};
\cite[Proposition~2.4]{MMZ}}]\label{linear-profile}
Let \(\{f_n\}\) be a bounded sequence in \(H^1(\R^3)\). After passing to
a subsequence, there exist
\(J^*\in\{0,1,2,\ldots,\infty\}\), nonzero profiles
\(\{\phi^j\}_{j=1}^{J^*}\subset H^1(\R^3)\), and space-time parameters
\(\{(t_n^j,x_n^j)\}_{j=1}^{J^*}\subset\R\times\R^3\) with the following
properties.

For every finite \(0\leq J\leq J^*\), one has
\begin{equation}\label{linear-profile-expansion}
 f_n=\sum_{j=1}^J\phi_n^j+r_n^J,
 \qquad
 \phi_n^j=e^{it_n^jH}[\phi^j(\cdot-x_n^j)],
 \qquad r_n^J\in H^1(\R^3).
\end{equation}
For every fixed finite \(J\leq J^*\), the following Pythagorean
expansions hold:
\begin{align}
 \|f_n\|_{L_x^2}^2
 &=\sum_{j=1}^J\|\phi_n^j\|_{L_x^2}^2
   +\|r_n^J\|_{L_x^2}^2+o_n(1),
 \label{profile-mass}\\
 \|f_n\|_{\dot H_V^1}^2
 &=\sum_{j=1}^J\|\phi_n^j\|_{\dot H_V^1}^2
   +\|r_n^J\|_{\dot H_V^1}^2+o_n(1),
 \label{profile-kinetic}\\
 \|f_n\|_{L_x^4}^4
 &=\sum_{j=1}^J\|\phi_n^j\|_{L_x^4}^4
   +\|r_n^J\|_{L_x^4}^4+o_n(1).
 \label{profile-quartic}
\end{align}
For every finite \(1\leq J\leq J^*\), the remainder obeys
\begin{equation}\label{linear-remainder-weak}
 \bigl(e^{-it_n^JH}r_n^J\bigr)(\,\cdot+x_n^J)
 \rightharpoonup0
 \qquad\text{weakly in }H^1(\R^3).
\end{equation}
The remainder also satisfies
\begin{equation}\label{linear-remainder-small}
 \lim_{J\to J^*}\limsup_{n\to\infty}
 \|e^{-itH}r_n^J\|_{L_{t,x}^5(\R\times\R^3)}=0.
\end{equation}
The parameters are asymptotically orthogonal in the following sense:
for $j\neq k$,
\begin{equation}\label{profile-orthogonality}
 |t_n^j-t_n^k|+|x_n^j-x_n^k|\longrightarrow\infty.
\end{equation}
Finally, for each $j$, we may assume that either
$t_n^j\equiv0$ or $t_n^j\to\pm\infty$, and either
$x_n^j\equiv0$ or $|x_n^j|\to\infty$.
\end{proposition}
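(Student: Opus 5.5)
The plan is to follow the standard inverse-Strichartz scheme of Keraani type in the form used for $e^{-itH}$ in \cite{KMVZ,GWY}. The extra work comes from translations not commuting with $H$.

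\emph{Refined Strichartz and inverse Strichartz.}

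1. Littlewood--Paley projections $P_N^V$ adapted to $H$ are available through the Sobolev equivalence of Lemma~\ref{sobolev-equivalence}.
2. Using them, I would prove a refined estimate of the form
\[
\|e^{-itH}f\|_{L^5_{t,x}}\lesssim \|f\|_{H^1}^{1-\theta}\,\sup_N\|e^{-itH}P_N^Vf\|_{L^\infty_{t,x}}^{\theta}
\]
for some $\theta\in(0,1)$. This comes from interpolating Strichartz with the square function and a bilinear or Hölder argument.
3. Since $f_n$ is bounded in $H^1$ and the $L^5_{t,x}$ estimate sits at the $\dot H^{1/2}$ level, both very low frequencies ($L^2$ bound) and very high frequencies ($\dot H^1$ bound) contribute little. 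Hence if $\|e^{-itH}f_n\|_{L^5_{t,x}}\ge\varepsilon$, the supremum is essentially attained at a frequency $N\sim_{\varepsilon}1$. This is why no scaling parameters appear in the decomposition.
4. Choose $(t_n,x_n)$ with $|e^{-it_nH}P_N^Vf_n(x_n)|\gtrsim_\varepsilon 1$. Pass to a weak limit
\[
e^{-it_nH}f_n(\cdot+x_n)\rightharpoonup\phi \quad\text{in } H^1.
\]
Pairing with the translated kernel of $P_N^V$ gives $\|\phi\|_{L^2}\gtrsim_\varepsilon 1$. When $|x_n|\to\infty$, the translated kernels converge to those of the free projection, since $V(\cdot+x_n)\to0$ locally.

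\emph{Normalization of parameters and decoupling.}

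1. After subsequences, each $t_n$ is either bounded or tends to $\pm\infty$, and each $x_n$ is either bounded or escapes to infinity.
2. Bounded parameters are absorbed into $\phi$, so that $t_n\equiv0$ or $x_n\equiv0$. This uses strong continuity of $e^{-itH}$ and of translations on $H^1$.
3. Set $r_n=f_n-e^{it_nH}[\phi(\cdot-x_n)]$. The mass expansion \eqref{profile-mass} is immediate from the weak convergence, since $e^{-it_nH}$ is unitary on $L^2$.
4. For \eqref{profile-kinetic} I need weak convergence in $\dot H_V^1$. Write $\langle Hf,f\rangle$ after translation as
\[
\|\nabla f\|^2+\int V(\cdot+x_n)|f|^2.
\]
If $x_n\equiv0$, the cross term vanishes by weak convergence. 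If $|x_n|\to\infty$, the $V$-term of the profile tends to $0$ by \eqref{weighted-hardy} and dominated convergence, and the cross term is handled the same way.
5. For \eqref{profile-quartic}, split into two cases. If $t_n\to\pm\infty$, Lemma~\ref{linear-L4-decay} shows the profile's $L^4$ norm tends to $0$, so the identity is trivial. If $t_n\equiv0$, Rellich compactness gives a.e. convergence of $f_n(\cdot+x_n)$ after a subsequence, and Brezis--Lieb yields the splitting.

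\emph{Iteration, orthogonality and remainder.}

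1. Iterate the extraction on $r_n^J$, which gives \eqref{linear-profile-expansion}; \eqref{linear-remainder-weak} holds by construction.
2. The decoupled masses and energies are summable, so the lower bound on each extracted $\|\phi^j\|_{L^2}$ forces $\varepsilon_J\to0$. The inverse Strichartz step then gives \eqref{linear-remainder-small}.
3. For \eqref{profile-orthogonality}, argue by induction. If $(t_n^j-t_n^k,x_n^j-x_n^k)$ stayed bounded for some $j<k$, then the weak limit defining $\phi^k$ would vanish by \eqref{linear-remainder-weak} for $r_n^j$. To see this, write $e^{-it_n^kH}r_n^{k-1}(\cdot+x_n^k)$ as a bounded-parameter perturbation of $e^{-it_n^jH}r_n^{j}(\cdot+x_n^j)$.

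\emph{Main obstacle.} The step I expect to be hardest is this last comparison when $|x_n|\to\infty$. The operator $e^{-itH}$ does not commute with translation, so one must show that
\[
\tau_{-x_n}e^{-itH}\tau_{x_n}=e^{-it(-\Delta+V(\cdot+x_n))}\;\longrightarrow\; e^{it\Delta}
\]
strongly on $H^1$, uniformly for $t$ in compact sets. I would first prove this strong resolvent convergence via the Trotter--Kato theorem, using that $V(\cdot+x_n)\to0$ in $L^{3/2}_{\mathrm{loc}}$ together with the uniform Hardy bound \eqref{weighted-hardy}. The same convergence also justifies the frequency-localized kernel limits in the inverse Strichartz step.
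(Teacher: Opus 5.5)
Your plan is correct and follows the standard argument behind this proposition. The paper does not reprove the proposition; it imports it from \cite{GWY,MMZ}, which adapt \cite{KMVZ}: inverse Strichartz with $H$-adapted Littlewood--Paley pieces at frequencies $N\sim_\varepsilon 1$, weak limits in the moving frame, decoupling via weak convergence and Brezis--Lieb, and the strong convergence $\tau_{-x_n}e^{-itH}\tau_{x_n}\to e^{it\Delta}$ for $|x_n|\to\infty$ as the key input.

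Two corrections are needed.
\begin{enumerate}
\item \emph{Absorbing bounded times.} When $t_n\to t_\infty$ with $|x_n|\to\infty$, you need that operator convergence, not just strong continuity of $e^{-itH}$ and of translations. The absorbed profile is then $e^{-it_\infty\Delta}\phi$, not $e^{it_\infty H}\phi$.
\item \emph{Weight in the refined estimate.} Your displayed refined estimate should use $\sup_N N^{-1}\|e^{-itH}P_N^Vf\|_{L^\infty_{t,x}}$, which matches the $\dot H^{1/2}$ scaling of $L^5_{t,x}$. An $H^1$ bound only gives $\|e^{-itH}P_N^Vf\|_{L^\infty_{t,x}}\lesssim N^{1/2}\|f\|_{\dot H^1}$. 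So without the weight, the supremum can be attained at some $N_n\to\infty$ by a piece with negligible $\dot H^{1/2}$ norm. In that case the localization $N\sim_\varepsilon1$, and hence a nonzero weak limit, does not follow.
\end{enumerate}
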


\subsection{Variational analysis}

For the repulsive potential $V(x)=a|x|^{-\mu}$, the sharp
Gagliardo--Nirenberg constant equals the constant $C_0$ for $V=0$, but it is not
attained when $a>0$; see \cite[Lemma~2.7]{AHI} and
\cite[Theorem~3.1(ii)]{KMVZ}. For $V=0$, equality is
attained by the ground state $Q$. Thus
\begin{equation}\label{sharp-gn}
 \|f\|_{L_x^4}^4
 \leq C_0\|f\|_{L_x^2}\|f\|_{\dot H^1}^3
 \leq C_0\|f\|_{L_x^2}\|f\|_{\dot H_V^1}^3.
\end{equation}
The Pohozaev identities give
\begin{equation}\label{pohozaev}
 \|Q\|_{\dot H^1}^2=3\|Q\|_{L_x^2}^2,
 \qquad
 \|Q\|_{L_x^4}^4=4\|Q\|_{L_x^2}^2,
 \qquad
 E_0(Q)=\frac16\|Q\|_{\dot H^1}^2.
\end{equation}
Equivalently, if $M(f)\leq M(Q)$, then
\begin{equation}\label{GN-normalized}
 \|f\|_{L_x^4}^4
 \leq\frac43
 \left(\frac{M(f)}{M(Q)}\right)^{1/2}
 \frac{\|f\|_{\dot H_V^1}}{\|Q\|_{\dot H^1}}
 \|f\|_{\dot H_V^1}^2.
\end{equation}

We shall use the following direct consequence of \eqref{sharp-gn}.  If
$f\in H^1(\R^3)\setminus\{0\}$ and $K_V(f)=0$, then
\begin{equation}\label{zero-virial-threshold}
 M(f)E_V(f)\geq M(Q)E_0(Q).
\end{equation}
Indeed, $K_V(f)=0$ and \eqref{sharp-gn} imply
\[
 \|f\|_{\dot H^1}^2
 \leq\frac34\|f\|_{L_x^4}^4
 \leq\frac34C_0\|f\|_{L_x^2}\|f\|_{\dot H^1}^3,
\]
and hence
\[
 M(f)\|f\|_{\dot H^1}^2
 \geq M(Q)\|Q\|_{\dot H^1}^2.
\]
On the other hand,
\[
 E_V(f)=\frac16\|f\|_{\dot H^1}^2
 +\frac{3-\mu}{6}\int_{\R^3}V|f|^2\,dx.
\]
Thus \eqref{zero-virial-threshold} follows from \eqref{pohozaev}.

We now give the sign properties in the form used below.

\begin{lemma}[Below-threshold sign]\label{below-threshold-sign}
Let $u_0\in H^1(\R^3)$ satisfy
\begin{equation}\label{below-threshold-normalized}
 M(u_0)=M(Q),\qquad E_V(u_0)<E_0(Q),\qquad K_V(u_0)<0.
\end{equation}
Then, throughout the lifespan of the corresponding solution,
\begin{equation}\label{below-threshold-sign-conclusion}
 K_V(u(t))<0,
 \qquad \|u(t)\|_{\dot H_V^1}>\|Q\|_{\dot H^1}.
\end{equation}
Moreover, there exists $c=c(E_0(Q)-E_V(u_0))>0$ such that
\begin{equation}\label{below-threshold-gap}
 \|u(t)\|_{\dot H_V^1}^2\geq
 (1+c)\|Q\|_{\dot H^1}^2
\end{equation}
throughout the lifespan.
\end{lemma}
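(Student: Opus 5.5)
The plan is a standard continuity argument driven by the sharp Gagliardo--Nirenberg inequality \eqref{GN-normalized} and conservation of mass and energy. Write $y(t)=\|u(t)\|_{\dot H_V^1}^2$ and $y_Q=\|Q\|_{\dot H^1}^2$. Mass conservation gives $M(u(t))=M(Q)$, so \eqref{GN-normalized} applies with ratio one. Combining it with energy conservation, we have
\[
E_V(u_0)=E_V(u(t))\geq \frac12 y(t)-\frac13\,\frac{y(t)^{3/2}}{y_Q^{1/2}}=:g(y(t)).
\]
The function $g(y)=\frac12 y-\frac13 y^{3/2}y_Q^{-1/2}$ is increasing on $[0,y_Q]$ and decreasing on $[y_Q,\infty)$. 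Its maximum is $g(y_Q)=\frac16 y_Q=E_0(Q)$ by \eqref{pohozaev}. Since $E_V(u_0)<E_0(Q)$, the set $\{y:g(y)\leq E_V(u_0)\}$ splits into two components, $[0,y_-]$ and $[y_+,\infty)$, with $y_-<y_Q<y_+$. By continuity of $t\mapsto y(t)$ on the lifespan, $y(t)$ stays in a single component.

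The first step is to show that $y(0)>y_Q$, so that the solution lies in $[y_+,\infty)$. Suppose instead $y(0)\leq y_Q$. Then \eqref{GN-normalized} gives $\|u_0\|_{L^4}^4\leq \frac43 y(0)$. Since the potential term is nonnegative,
\[
K_V(u_0)\geq \|u_0\|_{\dot H^1}^2+\frac{\mu}{2}\int V|u_0|^2-\|u_0\|_{\dot H_V^1}^2=\Bigl(\frac\mu2-1\Bigr)\int V|u_0|^2 .
\]
This is not obviously nonnegative for $\mu<2$, so I would argue differently. Use
\[
E_V(u)=\frac16\|u\|_{\dot H^1}^2+\frac{3-\mu}{6}\int V|u|^2+\frac13K_V(u),
\]
which follows from the definitions exactly as in the computation for \eqref{zero-virial-threshold}. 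If $y(0)\leq y_Q$, then $y(t)\le y_-<y_Q$ for all $t$. In that case I would run a continuity argument on $K_V$ directly.

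Suppose $K_V(u(t_1))=0$ at some time $t_1$. Then \eqref{zero-virial-threshold} gives $M(Q)E_V(u_0)\geq M(Q)E_0(Q)$, contradicting $E_V(u_0)<E_0(Q)$. Hence $K_V(u(t))<0$ throughout the lifespan, because $K_V$ is continuous in time and negative at $t=0$. This proves the first part of \eqref{below-threshold-sign-conclusion} without any case analysis.

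For the second part, $K_V<0$ gives
\[
\|u\|_{\dot H^1}^2<\frac34\|u\|_{L^4}^4\leq \frac34 C_0\|u\|_{L^2}\|u\|_{\dot H^1}^3,
\]
by the first inequality in \eqref{sharp-gn}. Normalizing with $M(u)=M(Q)$ and \eqref{pohozaev}, this yields $\|u(t)\|_{\dot H^1}^2>y_Q$, and hence $y(t)\geq\|u(t)\|_{\dot H^1}^2>y_Q$. So $y(t)$ lies in the upper component $[y_+,\infty)$ for all $t$.

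This gives \eqref{below-threshold-gap} with $1+c=y_+/y_Q$. It remains to check that $c$ depends only on the gap $E_0(Q)-E_V(u_0)$. Set $s=y_+/y_Q>1$. Then $s$ solves
\[
\frac12 s-\frac13 s^{3/2}=\frac{E_V(u_0)}{y_Q}=\frac16-\frac{E_0(Q)-E_V(u_0)}{y_Q}.
\]
Because $h(s)=\frac12 s-\frac13 s^{3/2}$ is strictly decreasing for $s>1$ with $h(1)=\frac16$, the root $s$ exceeds $1$ by an amount determined by the gap and $y_Q$. The main subtle point is the choice of functional: the positive potential term means one must use $\|u\|_{\dot H^1}$, not the $\dot H_V^1$ norm, in the $K_V<0$ deduction. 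The $\dot H_V^1$ version is then only used in the energy trapping estimate.
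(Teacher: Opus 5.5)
Your proposal is correct and follows the paper's proof. You use the continuity argument for $K_V$ via \eqref{zero-virial-threshold}, then the sharp Gagliardo--Nirenberg inequality in the $\dot H^1$ norm to get $\|u(t)\|_{\dot H^1}>\|Q\|_{\dot H^1}$, and finally energy trapping via \eqref{GN-normalized} for the uniform gap. The only difference is that the paper parametrizes by $\|u\|_{\dot H_V^1}/\|Q\|_{\dot H^1}=1+s$ where you use the squared norm and the function $g$, which is equivalent. Your abandoned first attempt to prove $y(0)>y_Q$ directly plays no role in the final argument and can be deleted.
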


\begin{proof}
If $K_V(u(t_0))=0$ at some time $t_0$, then
\eqref{zero-virial-threshold} and conservation of mass and energy give
\[
 M(Q)E_0(Q)\leq M(u(t_0))E_V(u(t_0))
 =M(Q)E_V(u_0)<M(Q)E_0(Q),
\]
a contradiction. Thus $K_V(u(t))<0$ by continuity.

Fix a time $t$ in the lifespan and write $f=u(t)$. Since $K_V(f)<0$,
\[
 \|f\|_{\dot H^1}^2
 +\frac\mu2\int_{\R^3}V|f|^2\,dx
 <\frac34\|f\|_{L_x^4}^4.
\]
Using \eqref{sharp-gn}, \eqref{pohozaev}, and $M(f)=M(Q)$, we obtain 
\[
 \|f\|_{\dot H^1}^2
 <\frac34\|f\|_{L_x^4}^4
 \leq \|f\|_{\dot H^1}^2
       \frac{\|f\|_{\dot H^1}}{\|Q\|_{\dot H^1}}.
\]
Hence $\|f\|_{\dot H^1}>\|Q\|_{\dot H^1}$, and therefore
$\|f\|_{\dot H_V^1}>\|Q\|_{\dot H^1}$.

To obtain a uniform gap, set $y(t):=\|u(t)\|_{\dot H_V^1}/\|Q\|_{\dot H^1}$. The previous part shows $y(t)>1$. Since $M(u(t))=M(Q)$, \eqref{GN-normalized} gives $\|u(t)\|_4^4\le\tfrac43y(t)\|u(t)\|_{\dot H_V^1}^2$. Hence
\[
E_V(u_0)=E_V(u(t))
=\tfrac12\|u(t)\|_{\dot H_V^1}^2-\tfrac14\|u(t)\|_4^4
\ge \|Q\|_{\dot H^1}^2(\tfrac12y(t)^2-\tfrac13y(t)^3).
\]
Write $y(t)=1+s(t)$, $s(t)>0$. Since
\[
\tfrac12(1+s)^2-\tfrac13(1+s)^3=\tfrac16-\tfrac12s^2-\tfrac13s^3
\]
and $E_0(Q)=\|Q\|_{\dot H^1}^2/6$, we get
\[
E_V(u_0)\ge E_0(Q)-\|Q\|_{\dot H^1}^2(\tfrac12s(t)^2+\tfrac13s(t)^3).
\]
Thus $d:=E_0(Q)-E_V(u_0)\le \|Q\|_{\dot H^1}^2h(s(t))$, where $h(s)=\tfrac12s^2+\tfrac13s^3$ is strictly increasing on $[0,\infty)$. Let $s_d>0$ solve $h(s_d)=d/\|Q\|_{\dot H^1}^2$. Then $s(t)\ge s_d$ and hence
\[
\|u(t)\|_{\dot H_V^1}^2=\|Q\|_{\dot H^1}^2(1+s(t))^2
\ge \|Q\|_{\dot H^1}^2(1+s_d)^2
=(1+c(d))\|Q\|_{\dot H^1}^2,
\]
with $c(d):=(1+s_d)^2-1>0$. This proves \eqref{below-threshold-gap}.

\end{proof}

\begin{lemma}[Variational dichotomy]\label{variational-dichotomy}
Let $f\in H^1(\R^3)\setminus\{0\}$ satisfy
\[
 M(f)E_V(f)<M(Q)E_0(Q).
\]
Then
\begin{align*}
 \|f\|_{L_x^2}\|f\|_{\dot H_V^1}
  <\|Q\|_{L_x^2}\|Q\|_{\dot H^1}&\quad\Longrightarrow\quad K_V(f)>0,\\
 \|f\|_{L_x^2}\|f\|_{\dot H_V^1}
  \geq\|Q\|_{L_x^2}\|Q\|_{\dot H^1}&\quad\Longrightarrow\quad K_V(f)<0.
\end{align*}
The same statement holds for $V=0$ with $E_0$, $K_0$, and $\dot H^1$.
\end{lemma}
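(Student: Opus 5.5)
The plan is to prove the two implications separately. The first uses only the sharp Gagliardo--Nirenberg inequality \eqref{sharp-gn}. The second combines \eqref{sharp-gn} with a mass-preserving scaling argument and the zero-virial bound \eqref{zero-virial-threshold}. Throughout write $A(f):=\|f\|_{L_x^2}\|f\|_{\dot H_V^1}$ and $A_Q:=\|Q\|_{L_x^2}\|Q\|_{\dot H^1}$. By \eqref{pohozaev} we have $\|Q\|_{\dot H^1}^2=3M(Q)$, hence $A_Q=\sqrt3\,M(Q)$. Since $Q$ attains equality in the free inequality, $C_0=\|Q\|_{L^4}^4/(\|Q\|_{L^2}\|Q\|_{\dot H^1}^3)=4/(3A_Q)$.

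\emph{First implication.} Apply \eqref{sharp-gn} with only two of the three gradient factors upgraded to $\dot H_V^1$:
\[
\|f\|_{L^4}^4\le C_0\|f\|_{L^2}\|f\|_{\dot H^1}\,\|f\|_{\dot H^1}^2\le \tfrac{4}{3A_Q}A(f)\|f\|_{\dot H^1}^2 .
\]
The potential term $\frac\mu2\int V|f|^2$ in $K_V$ is nonnegative and can be dropped. This gives
\[
K_V(f)\ge \|f\|_{\dot H^1}^2\bigl(1-A(f)/A_Q\bigr),
\]
which is strictly positive when $A(f)<A_Q$ and $f\ne0$. No energy assumption is needed here. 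The point of keeping $\|\nabla f\|^2$ rather than $\|f\|_{\dot H_V^1}^2$ as the main term is that $\mu/2\le1$ prevents a direct comparison in the other direction.

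\emph{Second implication.} Using \eqref{sharp-gn} in full,
\[
M(f)E_V(f)\ge \tfrac12A(f)^2-\tfrac{C_0}{4}A(f)^3=:\varphi(A(f)).
\]
Here $\varphi$ attains its maximum exactly at $A=A_Q$, with $\varphi(A_Q)=A_Q^2/6=M(Q)E_0(Q)$. In particular $A(f)=A_Q$ is incompatible with the hypothesis, so we may assume $A(f)>A_Q$.

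Suppose, for contradiction, that $K_V(f)\ge0$. The case $K_V(f)=0$ contradicts \eqref{zero-virial-threshold} immediately. If $K_V(f)>0$, consider the mass-preserving rescalings $f_\lambda=\lambda^{3/2}f(\lambda\cdot)$. A direct computation gives
\[
E_V(f_\lambda)=\tfrac{\lambda^2}{2}\|f\|_{\dot H^1}^2+\tfrac{\lambda^\mu}{2}\int V|f|^2-\tfrac{\lambda^3}{4}\|f\|_4^4,\qquad \tfrac{d}{d\lambda}E_V(f_\lambda)=\lambda^{-1}K_V(f_\lambda).
\]
The quantity $A(f_\lambda)$ is continuous in $\lambda$, tends to $0$ as $\lambda\to0$, and equals $A(f)>A_Q$ at $\lambda=1$. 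So there is $\lambda_0\in(0,1)$ with $A(f_{\lambda_0})=A_Q$.

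\begin{itemize}
\item If $K_V(f_\lambda)>0$ on $[\lambda_0,1]$, then $E_V(f_{\lambda_0})\le E_V(f)$. This gives $M(Q)E_0(Q)=\varphi(A_Q)\le M(f)E_V(f_{\lambda_0})<M(Q)E_0(Q)$, a contradiction.
\item Otherwise, let $\lambda_1\in[\lambda_0,1)$ be the largest zero of $K_V(f_\lambda)$. Then $E_V(f_{\lambda_1})\le E_V(f)$, and \eqref{zero-virial-threshold} applied to $f_{\lambda_1}$ gives the same contradiction.
\end{itemize}

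The free case $V=0$ is identical. The main delicate point is the second implication: the naive approach of increasing $\lambda$ until the virial vanishes increases the energy and yields nothing. One must instead decrease $\lambda$ down to the level $A=A_Q$, using that the energy decreases while $K_V>0$.
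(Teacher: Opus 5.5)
Your proof is correct, but it differs from the paper's in both halves, most substantially in the second.

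\textbf{Second implication.} The paper uses no scaling at all. It notes the identity
\[
 M(f)K_V(f)=3M(f)E_V(f)-\tfrac12\|f\|_{L_x^2}^2\|f\|_{\dot H_V^1}^2-\tfrac{2-\mu}{2}M(f)\int_{\R^3}V|f|^2\,dx .
\]
Since $\mu\le2$, $M(f)E_V(f)<M(Q)E_0(Q)$ and $A(f)\ge A_Q$, the right side is $<3M(Q)E_0(Q)-\tfrac12A_Q^2=0$ by \eqref{pohozaev}. This takes one line, needs no Gagliardo--Nirenberg estimate on $f$, and covers the case $A(f)=A_Q$ without a separate step.

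Your route descends along the mass-preserving dilations $f_\lambda$ to the level $A(f_{\lambda_0})=A_Q$ while $K_V(f_\lambda)>0$, or else stops at the last zero of $K_V(f_\lambda)$ and invokes \eqref{zero-virial-threshold}. This is a valid fibering-map argument. Its merit is that it never uses the sign of $2-\mu$; it needs only the homogeneity of $V$, \eqref{sharp-gn}, and \eqref{zero-virial-threshold}. However, your closing claim that one \emph{must} decrease $\lambda$ is overstated: for $1<\mu\le2$ the identity above makes scaling unnecessary.

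\textbf{First implication.} The paper argues by contraposition and uses \eqref{zero-virial-threshold}, hence the energy hypothesis, to exclude $K_V(f)=0$. Your direct bound $K_V(f)\ge\|f\|_{\dot H^1}^2\bigl(1-A(f)/A_Q\bigr)$ is slightly stronger, because it shows this half needs no energy assumption at all.

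One minor wording slip: in that step you upgrade one of the three gradient factors to $\dot H_V^1$, not two, as your display correctly shows.
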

\begin{proof}
By \eqref{zero-virial-threshold}, $K_V(f)$ cannot vanish.  If
$K_V(f)<0$, then \eqref{sharp-gn} gives
\[
 \|f\|_{L_x^2}\|f\|_{\dot H^1}
 >\|Q\|_{L_x^2}\|Q\|_{\dot H^1}.
\]
Thus
$\|f\|_{L_x^2}\|f\|_{\dot H_V^1}
 <\|Q\|_{L_x^2}\|Q\|_{\dot H^1}$ implies $K_V(f)>0$.

If instead
$\|f\|_{L_x^2}\|f\|_{\dot H_V^1}
 \geq\|Q\|_{L_x^2}\|Q\|_{\dot H^1}$, then
\begin{align*}
 M(f)K_V(f)
 &=3M(f)E_V(f)-\frac12M(f)\|f\|_{\dot H_V^1}^2\\
 &\quad-\frac{2-\mu}{2}M(f)\int_{\R^3}V|f|^2\,dx\\
 &<3M(Q)E_0(Q)-\frac12M(Q)\|Q\|_{\dot H^1}^2=0.
\end{align*}
The last identity follows from \eqref{pohozaev}.  This proves the
second implication.  The proof for $V=0$ is identical.
\end{proof}

\begin{theorem}[Sub-threshold scattering, \cite{GWY,KMVZ}]
\label{subthreshold-scattering}
Fix $1<\mu\leq2$ and $a>0$. Let $u$ be the solution to \eqref{NLSV}
with initial data $u_0\in H^1(\R^3)$. If
\begin{equation}\label{subthreshold-scattering-hypothesis}
 M(u_0)E_V(u_0)<M(Q)E_0(Q),\qquad
 \|u_0\|_{L_x^2}\|u_0\|_{\dot H_V^1}
 <\|Q\|_{L_x^2}\|Q\|_{\dot H^1},
\end{equation}
then $u$ exists globally and scatters in $H^1(\R^3)$.
\end{theorem}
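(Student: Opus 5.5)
The plan is to reduce Theorem~\ref{subthreshold-scattering} to the threshold scattering result, Theorem~\ref{threshold-scattering}, or else to run the standard Kenig--Merle concentration-compactness scheme. Since the theorem is cited from \cite{GWY,KMVZ}, the most economical proof is the scheme used there, with the variational input supplied by Lemma~\ref{variational-dichotomy}.

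\textbf{Step 1: global existence and a uniform coercive bound.} By scaling as in Corollary~\ref{product-threshold}, I will normalize $M(u_0)=M(Q)$. This replaces $V$ by $V_\lambda$, which is again of the form $a'|x|^{-\mu}$ with $a'>0$. Then $E_V(u_0)<E_0(Q)$ and $\|u_0\|_{\dot H_V^1}<\|Q\|_{\dot H^1}$. Next I argue as in Lemma~\ref{below-threshold-sign}, with the inequality reversed. Set $y(t)=\|u(t)\|_{\dot H_V^1}/\|Q\|_{\dot H^1}$. Conservation and \eqref{GN-normalized} give
\[
E_V(u_0)\geq \|Q\|_{\dot H^1}^2\bigl(\tfrac12y^2-\tfrac13y^3\bigr).
\]
The right-hand side equals $E_0(Q)$ exactly at $y=1$. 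By continuity, $y(t)$ therefore stays at most $1-c$ for some $c>0$ depending on $E_0(Q)-E_V(u_0)$. This gives a uniform $H^1$ bound, so $u$ is global by Proposition~\ref{local-theory}. It also gives a coercive lower bound
\[
K_V(u(t))\gtrsim \|u(t)\|_{\dot H_V^1}^2,
\]
obtained by inserting $y\le 1-c$ into \eqref{GN-normalized}.

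\textbf{Step 2: minimal blow-up solution.} Suppose scattering fails. Define the critical level $E_c<M(Q)E_0(Q)$ as the infimum of mass--energy levels for which the $L^5_{t,x}$ bound fails. By Proposition~\ref{local-theory}, small data scatter, so $E_c>0$. Take a sequence of solutions whose $L^5_{t,x}$ norms diverge and whose levels tend to $E_c$, and apply Proposition~\ref{linear-profile}. The profile energies decouple via \eqref{profile-mass}--\eqref{profile-quartic}, and each profile stays in the coercive region. Hence every profile with level below $E_c$ scatters. Profiles with $|x_n^j|\to\infty$ are handled differently: there $V$ is negligible, and one embeds the free subthreshold scattering solutions of \cite{DHR} into \eqref{NLSV} with small error in $N^{1/2}$. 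Profiles with $t_n^j\to\pm\infty$ are handled by the last item of Proposition~\ref{local-theory}. Stability (Lemmas~\ref{stability}/\ref{stability-inverse-power}) then forces a single profile with $x_n\equiv0$, $t_n\equiv0$ and vanishing remainder. The result is a critical solution $u_c$ whose orbit is precompact in $H^1$, with no translation parameter because the potential breaks translation invariance.

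\textbf{Step 3: rigidity.} Apply the localized virial identity with weight $R^2\phi(x/R)$:
\[
\frac{d}{dt}\,\Im\int R\nabla\phi(x/R)\cdot\nabla u_c\,\bar u_c\,dx = 4K_V(u_c)+O(\text{tail outside }|x|\le R).
\]
The repulsivity $x\cdot\nabla V=-\mu V\le0$ gives the correct sign of the potential term inside $K_V$. Compactness makes the tail error uniformly small. Together with the coercivity from Step~1, integrating in time gives $T\lesssim R$ uniformly in $T$, which is a contradiction.

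\textbf{Main obstacle.} The hardest point is the nonlinear embedding of profiles escaping to spatial infinity in Step~2. One must show that the potential term $V\,\phi(\cdot-x_n)$ is small in the dual Strichartz norm. When $\mu=2$ this is borderline, since $V$ fails to lie in $L^{3/2}$ near the origin, and the potential operator must be compared with the free one through the Sobolev equivalence of Lemma~\ref{sobolev-equivalence}. I would follow \cite{KMVZ} for this step.
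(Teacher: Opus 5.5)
Your strategy (a uniform coercivity bound from sharp Gagliardo--Nirenberg, a Kenig--Merle critical element with orbit precompact in $H^1$ without translations, and localized-virial rigidity) is the scheme of \cite{GWY,KMVZ}, which the paper cites in place of a proof. Your alternative of deducing the result from Theorem~\ref{threshold-scattering} would be circular: threshold scattering is proved by a compactness argument whose profiles are handled by the present theorem (compare the proof of Proposition~\ref{P:compact}).

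One derivation in Step~1 is wrong as stated for $\mu<2$. Inserting $y\le 1-c$ into \eqref{GN-normalized} only yields
\[
 K_V(u)\geq(1-y)\|u\|_{\dot H^1}^2+\Bigl(\frac\mu2-y\Bigr)\int_{\R^3}V|u|^2\,dx,
\]
and the second coefficient is negative once $y>\mu/2$. Use instead the first inequality in \eqref{sharp-gn}. For $M(u)\leq M(Q)$ it gives $\frac34\|u\|_{L_x^4}^4\leq y\|u\|_{\dot H^1}^2$, hence $K_V(u)\geq\min\{c,\mu/2\}\|u\|_{\dot H_V^1}^2$. The uniform lower bound needed in Step~3 then comes from $\|u\|_{\dot H_V^1}^2\geq 2E_V(u)>0$.

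The substantive gap is in Step~2 when $1<\mu<2$. Your critical level, like \eqref{subthreshold-scattering-hypothesis}, controls only the scale-invariant quantities $M(u)E_V(u)$ and $\|u\|_{L^2}\|u\|_{\dot H_V^1}$. For a fixed potential these do not bound a sequence in $H^1$: $u_\lambda=\lambda\psi(\lambda\,\cdot)$ with $\lambda\to\infty$ keeps both products convergent while $\|u_\lambda\|_{\dot H^1}\to\infty$. So Proposition~\ref{linear-profile} cannot be applied to your critical sequence as defined. When $\mu=2$ this is harmless, because $V_\lambda=V$ and you may normalize $M(u_n)=M(Q)$ along the sequence. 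When $\mu<2$, normalizing each $u_n$ replaces $a$ by $a\lambda_n^{2-\mu}$, a different operator for each $n$. The profile decomposition and the stability lemmas, however, are stated for one fixed $H$.

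The fix is to make your Step~1 rescaling once, for the given datum. Then fix the rescaled potential and induct on the energy alone over the class
\[
 \bigl\{f\in H^1:\ M(f)\leq M(Q),\ E_V(f)<E_0(Q),\ \|f\|_{\dot H_V^1}<\|Q\|_{\dot H^1}\bigr\}.
\]
This class has the properties the induction needs:
\begin{itemize}
\item By the Step~1 computation it is invariant under the flow, with a uniform gap on each energy sublevel.
\item It is bounded in $H^1$.
\item It is stable under the profile decomposition, since mass, $\dot H_V^1$ norm and energy decouple and the energies are nonnegative by coercivity.
\end{itemize}
Since the argument works for every coupling $a>0$, applying it with $a\lambda^{2-\mu}$ gives the theorem.
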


The solution in Theorem~\ref{subthreshold-scattering} also satisfies
\[
 \|u\|_{L_{t,x}^5(\R\times\R^3)}
 <C\bigl(E_V(u_0),M(u_0),E_0(Q),M(Q)\bigr).
\]
The corresponding result for \eqref{NLS0} is classical; see
\cite{DHR,HR}.

We next specialize to the threshold and use the distance function
$\delta$ defined in \eqref{delta-definition}.

\begin{lemma}\label{threshold-sign}
Suppose that $M(f)=M(Q)$ and $E_V(f)=E_0(Q)$. Then $\delta(f)\neq0$ and
\begin{equation}\label{delta-virial}
 K_V(f)=-\frac12\delta(f)
 -\frac{2-\mu}{2}\int_{\R^3}V|f|^2\,dx.
\end{equation}
Moreover, $K_V(f)<0$ if and only if $\delta(f)>0$. Consequently, a
solution satisfying \eqref{normalized-threshold} obeys
$\delta(u(t))>0$ and $K_V(u(t))<0$ throughout its lifespan.
\end{lemma}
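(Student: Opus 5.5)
The plan is to obtain the identity \eqref{delta-virial} by direct algebra, and then to read off the sign statements from it together with \eqref{zero-virial-threshold}. Write $P(f):=\int_{\R^3}V|f|^2\,dx$, so that $\|f\|_{\dot H_V^1}^2=\|f\|_{\dot H^1}^2+P(f)$. The energy constraint $E_V(f)=E_0(Q)=\frac16\|Q\|_{\dot H^1}^2$, by \eqref{pohozaev}, gives
\[
\tfrac34\|f\|_{L^4}^4=\tfrac32\|f\|_{\dot H_V^1}^2-\tfrac12\|Q\|_{\dot H^1}^2 .
\]
Substituting this into \eqref{virial-functional} yields
\[
K_V(f)=\|f\|_{\dot H_V^1}^2-P(f)+\tfrac\mu2P(f)-\tfrac32\|f\|_{\dot H_V^1}^2+\tfrac12\|Q\|_{\dot H^1}^2
=-\tfrac12\delta(f)-\tfrac{2-\mu}{2}P(f).
\]
This is \eqref{delta-virial}.

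Next I show $\delta(f)\neq0$. Suppose $\delta(f)=0$, i.e. $\|f\|_{\dot H_V^1}=\|Q\|_{\dot H^1}$. Since $a>0$ and $f\neq0$ (its mass is $M(Q)$), we have $P(f)>0$, so $\|f\|_{\dot H^1}<\|Q\|_{\dot H^1}$. Using \eqref{sharp-gn} with the free gradient and $M(f)=M(Q)$,
\[
\|f\|_{L^4}^4\le \tfrac43\frac{\|f\|_{\dot H^1}^3}{\|Q\|_{\dot H^1}}<\tfrac43\|Q\|_{\dot H^1}^2 .
\]
Hence $E_V(f)>\frac12\|Q\|_{\dot H^1}^2-\frac13\|Q\|_{\dot H^1}^2=E_0(Q)$, a contradiction. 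This strictness argument, which uses $a>0$ in the potential term, is the only delicate point.

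For the equivalence, if $\delta(f)>0$ then both terms in \eqref{delta-virial} are nonpositive (because $\mu\le2$ and $P\ge0$) and the first is strictly negative, so $K_V(f)<0$. Conversely, suppose $\delta(f)<0$. Then $\|f\|_{L^2}\|f\|_{\dot H_V^1}<\|Q\|_{L^2}\|Q\|_{\dot H^1}$. Arguing as in Lemma~\ref{variational-dichotomy} via \eqref{sharp-gn}, $K_V(f)<0$ would force $\|f\|_{\dot H^1}>\|Q\|_{\dot H^1}$, which is impossible. Also $K_V(f)\ne0$: the case $\mu=2$ follows directly from \eqref{delta-virial}, and for $\mu<2$, $K_V(f)=0$ together with the argument of \eqref{zero-virial-threshold} gives $\|f\|_{\dot H^1}\ge\|Q\|_{\dot H^1}$, again impossible. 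Hence $K_V(f)>0$. (Alternatively, one can argue directly: $K_V\le0$ would give $\|f\|_{\dot H^1}\ge\|Q\|_{\dot H^1}$, contradicting $\delta<0$.)

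Finally, for a solution satisfying \eqref{normalized-threshold}, conservation of mass and energy shows that every $u(t)$ satisfies the hypotheses. Thus $\delta(u(t))\neq0$ for all $t$, and $t\mapsto\delta(u(t))$ is continuous because $u\in C_tH^1$ and $\|\cdot\|_{\dot H_V^1}$ is continuous on $H^1$ by \eqref{weighted-hardy}. Since $K_V(u_0)<0$ gives $\delta(u_0)>0$, connectedness of the lifespan yields $\delta(u(t))>0$, and hence $K_V(u(t))<0$, throughout.
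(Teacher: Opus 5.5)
Your proof is correct and follows essentially the same route as the paper. You derive the identity \eqref{delta-virial} by substituting the energy constraint, and you get $\delta(f)\neq0$ from the strict sharp Gagliardo--Nirenberg inequality using $\int V|f|^2>0$. The dynamical statement then follows, as in the paper, from conservation and continuity.

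The only difference is cosmetic. The paper proves the converse directly: $K_V(f)<0$ forces $\|f\|_{\dot H^1}>\|Q\|_{\dot H^1}$, hence $\delta(f)>0$. That is exactly your parenthetical alternative, and it makes your contrapositive argument and case split on $\mu$ unnecessary.
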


\begin{proof}
The energy constraint and \eqref{pohozaev} give
\begin{equation}\label{quartic-delta}
 \|f\|_{L_x^4}^4=\|Q\|_{L_x^4}^4+2\delta(f).
\end{equation}
Substituting this identity into the definition of $K_V$ gives
\eqref{delta-virial}. If $\delta(f)=0$, then
\[
 \|f\|_{\dot H_V^1}=\|Q\|_{\dot H^1},
 \qquad \|f\|_{L_x^4}=\|Q\|_{L_x^4}.
\]
Since $V>0$ almost everywhere and $f\neq0$,
$\|f\|_{\dot H^1}<\|Q\|_{\dot H^1}$. This contradicts
\eqref{sharp-gn}. Thus $\delta(f)\neq0$.

If $\delta(f)>0$, then \eqref{delta-virial} gives $K_V(f)<0$. Conversely,
if $K_V(f)<0$, then
\[
 \|f\|_{\dot H^1}^2
 <\frac34\|f\|_{L_x^4}^4
 \leq \|f\|_{\dot H^1}^2
       \frac{\|f\|_{\dot H^1}}{\|Q\|_{\dot H^1}},
\]
so $\|f\|_{\dot H^1}>\|Q\|_{\dot H^1}$ and hence $\delta(f)>0$.
The final assertion follows from conservation and continuity.
\end{proof}

A direct calculation also gives
\begin{equation}\label{energy-virial-general}
 8K_V(f)=24E_V(f)-4\|f\|_{\dot H_V^1}^2
 -4(2-\mu)\int_{\R^3}V|f|^2\,dx.
\end{equation}
In particular, if $E_V(f)\leq E_0(Q)$, then
$8K_V(f)\leq-4\delta(f)$.

Finally, we record the compactness of threshold sequences with
$\delta\to0$.

\begin{lemma}[{\cite[Lemma~4.1 and Remark~4.2]{AHI};
\cite[Lemma~5.2]{MMZ}}]\label{qualitative-modulation}
Suppose that
\[
 M(f_n)=M(Q),\qquad E_V(f_n)=E_0(Q),\qquad \delta(f_n)\to0.
\]
Then there exist $\theta_n\in\R$ and $y_n\in\R^3$ such that
\begin{equation}\label{qualitative-convergence}
 \|f_n-e^{i\theta_n}Q(\cdot-y_n)\|_{H^1}\longrightarrow0,
\end{equation}
\begin{equation}\label{qualitative-potential-center}
 \int_{\R^3}V|f_n|^2\,dx\longrightarrow0,
 \qquad |y_n|\longrightarrow\infty.
\end{equation}
\end{lemma}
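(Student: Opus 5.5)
The plan is a concentration-compactness argument applied to the sequence $\{f_n\}$, followed by a short variational argument.

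\emph{Step 1: identify a single nonzero profile.} Since $M(f_n)=M(Q)$ and $\|f_n\|_{\dot H_V^1}^2=\|Q\|_{\dot H^1}^2+\delta(f_n)$ is bounded, $\{f_n\}$ is bounded in $H^1$. By \eqref{quartic-delta}, $\|f_n\|_{L^4}^4\to\|Q\|_{L^4}^4>0$. Rather than use the full Proposition~\ref{linear-profile}, I would use the elementary $H^1$ concentration lemma. If $\|f_n\|_{L^4}$ is bounded below, there exist $y_n$ such that $f_n(\cdot+y_n)\rightharpoonup\phi\neq0$ in $H^1$ along a subsequence.

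\emph{Step 2: decouple and show the remainder vanishes.} Set $r_n=f_n-\phi(\cdot-y_n)$. Weak convergence gives the Pythagorean splittings of mass and $\dot H^1$ norm. Brezis--Lieb gives the splitting of $\|\cdot\|_{L^4}^4$. Two quantities need care.
\begin{itemize}
\item The potential term. If $y_n$ is bounded, pass to $y_n\to y$ and absorb it, so $f_n\rightharpoonup\phi(\cdot-y)$. The potential term then splits as well, by local compactness together with the decay of $V$.
\item The sharp inequality. Write $m=M(\phi)/M(Q)\in(0,1]$. Using the sharp Gagliardo--Nirenberg inequality \eqref{sharp-gn} in the free form $\|g\|_4^4\le C_0\|g\|_2\|g\|_{\dot H^1}^3$, the bound $\|\cdot\|_{\dot H^1}\le\|\cdot\|_{\dot H_V^1}$, and the fact that $\|f_n\|_{\dot H^1}^2\le\|Q\|_{\dot H^1}^2+o(1)$:
\[
\|Q\|_4^4+o(1)=\|\phi\|_4^4+\|r_n\|_4^4 \le C_0\bigl(\|\phi\|_2\|\phi\|_{\dot H^1}^3+\|r_n\|_2\|r_n\|_{\dot H^1}^3\bigr).
\]
\end{itemize}
Write $A=\|\phi\|_{\dot H^1}^2$ and $B=\limsup\|r_n\|_{\dot H^1}^2$, so $A+B\le\|Q\|_{\dot H^1}^2$. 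The mass fractions $m$ and $1-m$ then give
\[
C_0\|Q\|_2\|Q\|_{\dot H^1}^3\le C_0\|Q\|_2\bigl(m^{1/2}A^{3/2}+(1-m)^{1/2}B^{3/2}\bigr).
\]
Since $x\mapsto x^{3/2}$ is strictly superadditive and $m^{1/2},(1-m)^{1/2}\le1$, equality is forced. Hence $B=0$, $m=1$, and $A=\|Q\|_{\dot H^1}^2$. Therefore $r_n\to0$ in $L^2$ and $\dot H^1$, so $f_n(\cdot+y_n)\to\phi$ strongly in $H^1$. Moreover $\phi$ attains equality in the free Gagliardo--Nirenberg inequality with $M(\phi)=M(Q)$ and $\|\phi\|_{\dot H^1}=\|Q\|_{\dot H^1}$. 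By Weinstein's characterization \cite{W}, $\phi=e^{i\theta}Q(\cdot-z)$. Absorbing $z$ into $y_n$ and choosing $\theta_n=\theta$ gives \eqref{qualitative-convergence}.

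\emph{Step 3: the potential term and the escape of the center.} The chain above also yields $\|f_n\|_{\dot H^1}^2\to\|Q\|_{\dot H^1}^2$. Since $\|f_n\|_{\dot H_V^1}^2\to\|Q\|_{\dot H^1}^2$ by assumption, it follows that
\[
\int V|f_n|^2=\|f_n\|_{\dot H_V^1}^2-\|f_n\|_{\dot H^1}^2\to0.
\]
Finally, suppose $|y_n|\not\to\infty$. Along a subsequence $y_n\to y$, and $f_n\to e^{i\theta}Q(\cdot-y)$ in $H^1$. By \eqref{weighted-hardy}, the map $g\mapsto\int V|g|^2$ is continuous on $H^1$, so $\int V|Q(\cdot-y)|^2=0$. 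This is impossible because $V>0$ and $Q>0$. Thus $|y_n|\to\infty$.

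\emph{Main obstacle.} The delicate point is Step 2: showing the remainder vanishes and that the profile is an exact free optimizer. It relies on combining the three splittings with strict superadditivity. The potential causes no trouble there, because I work only with free $\dot H^1$ norms and the bound $\|\cdot\|_{\dot H^1}\le\|\cdot\|_{\dot H_V^1}$.
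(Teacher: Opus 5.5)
Your proof is correct. The paper gives no argument for this lemma; it is imported from \cite[Lemma~4.1]{AHI} and \cite[Lemma~5.2]{MMZ}. The usual proof reaches the free variational problem more directly than you do.

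Combine \eqref{quartic-delta}, the free form of \eqref{sharp-gn}, and $\|f_n\|_{\dot H^1}^2\le\|f_n\|_{\dot H_V^1}^2=\|Q\|_{\dot H^1}^2+\delta(f_n)$. With no profile at all, this gives
\[
 \|Q\|_{L^4}^4+2\delta(f_n)
 \le C_0\|Q\|_{L^2}\|f_n\|_{\dot H^1}^3
 \le C_0\|Q\|_{L^2}\bigl(\|Q\|_{\dot H^1}^2+\delta(f_n)\bigr)^{3/2}.
\]
Hence $\|f_n\|_{\dot H^1}\to\|Q\|_{\dot H^1}$ and $\int V|f_n|^2\to0$ at the outset. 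So $\{f_n\}$ is an optimizing sequence for the free Gagliardo--Nirenberg inequality, with the mass and kinetic energy of $Q$. The classical compactness of such sequences modulo phase and translation (Lions' concentration compactness together with \cite{W}) then gives \eqref{qualitative-convergence}. The escape $|y_n|\to\infty$ is exactly your Step~3.

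Your Steps~1--2 (Lieb's lemma, Brezis--Lieb, strict superadditivity of $x^{3/2}$, Weinstein's characterization) amount to a self-contained proof of that compactness fact. Your route is therefore longer, but it uses nothing from the free theory beyond the identification of the optimizers. The first bullet of Step~2, on splitting the potential term, is never used and can be deleted.

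Two routine points should be made explicit.
\begin{enumerate}
\item Equality in $A^{3/2}+B^{3/2}\le(A+B)^{3/2}$ only gives $A=0$ or $B=0$. To conclude $B=0$ you need $A>0$, which holds because $\phi\neq0$ in $H^1$.
\item Everything you do is along subsequences, whereas the lemma concerns the whole sequence. Choose $(\theta_n,y_n)$ as near-minimizers of $\inf_{\theta,y}\|f_n-e^{i\theta}Q(\cdot-y)\|_{H^1}$, and use the subsequence-of-a-subsequence argument both for \eqref{qualitative-convergence} and for $|y_n|\to\infty$.
\end{enumerate}
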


\subsection{Modulation near the ground state}

\begin{proposition}[Modulation,
{\cite[Proposition~4.4]{AHI};
\cite[Proposition~5.1]{MMZ}}]\label{modulation}
There exists $\eta_0>0$ with the following property.  Let $u$ satisfy
\eqref{normalized-threshold}.  On the set
$\{t:0<\delta(t)<\eta_0\}$ there are real-valued functions
$\theta,\alpha$ and an $\R^3$-valued function $y$ whose restrictions
to every connected component are $C^1$, such that
\begin{equation}\label{decompz0}
 e^{-i(t+\theta(t))}u(t,x+y(t))=(1+\alpha(t))Q(x)+h(t,x).
\end{equation}
Writing $h=h_1+ih_2$, we have
\begin{equation}\label{modulation-orthogonality}
 \langle h_2,Q\rangle=0,\qquad
 \langle h_1,\partial_jQ\rangle=0\quad (j=1,2,3),\qquad
 \langle h_1,\Delta Q\rangle=0.
\end{equation}
Moreover,
\begin{align}
 &0<\alpha(t)\sim\|h(t)\|_{H^1}
     \sim\|\alpha(t)Q+h(t)\|_{H^1}\sim\delta(t),\label{mo10}\\
 &\int_{\R^3}V(x)|u(t,x)|^2\,dx\lesssim\delta(t)^2,
       \label{potential-modulation}\\
 &|\alpha'(t)|+|y'(t)|\lesssim\delta(t).\label{mo20}
\end{align}
The implicit constants depend only on $a$, $\mu$, and $Q$.
\end{proposition}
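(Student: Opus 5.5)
We prove the modulation proposition by the standard Duyckaerts–Merle/Duyckaerts–Roudenko scheme for the free problem, treating the potential as a perturbation controlled by the smallness of $\delta$. We proceed in four steps.

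\textbf{Step 1: qualitative closeness.} By Lemma~\ref{qualitative-modulation}, if $\eta_0$ is small then for every $t$ with $0<\delta(t)<\eta_0$ the function $u(t)$ is $H^1$-close to $e^{i\theta}Q(\cdot-y)$ for some parameters, with $\int V|u|^2\,dx$ small and $|y|$ large. (A compactness-by-contradiction argument turns the sequential statement into a uniform one.)

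\textbf{Step 2: choice of parameters.} We apply the implicit function theorem to the map
\[
(\theta,y,\alpha,f)\mapsto\Bigl(\langle \Im(e^{-i\theta}f(\cdot+y)),Q\rangle,\ \langle \Re(e^{-i\theta}f(\cdot+y)),\partial_jQ\rangle,\ \langle \Re(e^{-i\theta}f(\cdot+y))-(1+\alpha)Q,\Delta Q\rangle\Bigr)
\]
near $(0,0,0,Q)$. Its Jacobian in $(\theta,y,\alpha)$ is diagonal, with nonzero entries $\|Q\|_2^2$, $\|\partial_jQ\|_2^2$ and $\|\nabla Q\|_2^2$. This yields $C^1$ parameters on each connected component satisfying \eqref{decompz0} and \eqref{modulation-orthogonality}. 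The $C^1$ regularity in $t$ follows in the usual weak sense from the equation.

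\textbf{Step 3: the size estimates \eqref{mo10}.} Write $v=\alpha Q+h$. Expanding the mass constraint $M(u)=M(Q)$ gives
\[
2\langle Q,v_1\rangle=-\|v\|_2^2 .
\]
For the energy, $E_V(u)=E_0(u(\cdot+y))+\tfrac12\int V(\cdot+y)|u(\cdot+y)|^2\,dx$. Expanding $E_0$ around $Q$ and using $E_0'(Q)=-M'(Q)$ yields
\[
\tfrac12\Phi(v)+\tfrac12\int V|u|^2\,dx=O(\|v\|_{H^1}^3),
\]
where $\Phi$ is the linearized quadratic form $\langle L_+v_1,v_1\rangle+\langle L_-v_2,v_2\rangle$. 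The orthogonality conditions make $\Phi$ coercive on $h$. The mass relation shows $\langle Q,h_1\rangle=O(\alpha+\|v\|^2)$, and together these give $\|h\|_{H^1}\lesssim|\alpha|^2+\|v\|^2+\int V|u|^2$. Next, from $\|u\|_{\dot H^1_V}^2=\|Q\|_{\dot H^1}^2+\delta$ and the orthogonality $\langle h_1,\Delta Q\rangle=0$ we obtain
\[
2\alpha\|Q\|_{\dot H^1}^2+\int V|u|^2\,dx=\delta+O(\|v\|^2).
\]
Since $V\ge0$, the potential term enters the energy with a favorable sign, so the quadratic form bounds it: $\int V|u|^2\lesssim\|v\|^2\lesssim\alpha^2+\delta^2$. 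Combining these relations gives $\alpha\sim\delta$ (in particular $\alpha>0$), $\|h\|\lesssim\delta$, and \eqref{potential-modulation}.

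\textbf{Step 4: modulation equations \eqref{mo20}.} We differentiate the orthogonality conditions in time, substituting the equation for $h$:
\[
i\partial_t h+\Delta h-h+\dots=\alpha'(\cdot)+\theta'(\cdot)+y'\cdot\nabla Q+V(\cdot+y)u(\cdot+y)+O(\|v\|).
\]
Solving the resulting linear system gives $|\alpha'|+|y'|+|\theta'|\lesssim\|v\|+\|V(\cdot+y)u\|_{H^{-1}}$. Since $\|V(\cdot+y)u\|_{H^{-1}}\lesssim(\int V|u|^2)^{1/2}\|V\|$-type Hardy bound $\lesssim\delta$, we obtain \eqref{mo20}.

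\textbf{Main obstacle.} The hardest point is the coercivity/sign bookkeeping in Step 3. One must show the potential term is quadratically small, not merely $O(\delta)$. The plan is to use that $V\ge0$ enters the energy identity with the same sign as the nonnegative (coercive) part of $\Phi$, so that it is dominated by $\|v\|^2$.
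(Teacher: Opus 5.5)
Your route is the standard Duyckaerts--Roudenko scheme, which is what the paper relies on. The paper does not reprove the proposition: it cites \cite{AHI,MMZ}, adds the sign remark $\delta=2\alpha\|Q\|_{\dot H^1}^2+O(\delta^2)$, and obtains $\alpha'$ as in your Step 4, with the potential terms controlled by \eqref{weighted-hardy} and \eqref{potential-modulation}.

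The gap is in Step 3, at the point you yourself call the main obstacle; as written, the argument there is circular. The bound $\int V|u|^2\lesssim\|v\|_{H^1}^2$ follows from the boundedness of $\Phi$ alone and uses no sign information. Your bound for $h$ has $\|v\|^2$ and $+\int V|u|^2$ on the right-hand side. With $\|h\|_{H^1}^2$ on the left it is vacuous, since $\|h\|\le\|v\|+|\alpha|\|Q\|$ already gives it. With $\|h\|_{H^1}$ on the left it is false: combined with $\int V|u|^2\lesssim\|v\|^2$ it would force $\|h\|_{H^1}\lesssim\alpha^2$, whereas the mass identity gives $\|h\|_{L^2}\gtrsim|\alpha|$. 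So the step $\|v\|_{H^1}^2\lesssim\alpha^2+\delta^2$ is unsupported. Everything after it depends on that step, including the quadratic bound \eqref{potential-modulation} that Step 4 needs to get $\|V(\cdot+y)u\|_{H^{-1}}\lesssim\delta$.

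What closes the argument is an exact computation of the $\alpha$-part of $\Phi(\alpha Q+h)$. Since $L_+Q=-2Q^3$, one has $\langle L_+Q,Q\rangle=-8\|Q\|_{L^2}^2$. By $\langle h_1,\Delta Q\rangle=0$, also $\langle L_+Q,h_1\rangle=-2\langle Q,h_1\rangle$. The exact mass identity $\langle Q,h_1\rangle=-\alpha\|Q\|_{L^2}^2-\frac12\|v\|_{L^2}^2$ then turns your energy identity $\Phi(v)+\int V|u|^2=O(\|v\|_{H^1}^3)$ into
\[
\Phi(h)+\int_{\R^3}V|u|^2\,dx=4\|Q\|_{L^2}^2\,\alpha^2+O\bigl(\|v\|_{H^1}^3\bigr).
\]
This is where $V\geq0$ is used: the potential term sits on the same side as $\Phi(h)\geq c\|h\|_{H^1}^2$. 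Since $\|v\|_{H^1}\lesssim|\alpha|+\|h\|_{H^1}$ is small, you get $\|h\|_{H^1}\lesssim|\alpha|$ and $\int V|u|^2\lesssim\alpha^2$. The kinetic identity then reads $\delta=2\alpha\|Q\|_{\dot H^1}^2+O(\alpha^2)$, which gives $\alpha>0$, $\alpha\sim\delta$, and \eqref{potential-modulation}.

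You also still need the lower bounds in \eqref{mo10}. The bound $\|h\|_{L^2}\gtrsim\alpha$ follows from the same mass identity. The bound $\|\alpha Q+h\|_{H^1}\gtrsim\alpha$ follows from $\alpha\|Q\|_{\dot H^1}^2=-\langle v_1,\Delta Q\rangle$.

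Alternatively, you can get \eqref{potential-modulation} before any decomposition, using the sharp Gagliardo--Nirenberg computation from the proof of Lemma~\ref{below-threshold-sign} applied to $\|u\|_{\dot H^1}$. One has $\frac12\int V|u|^2=E_0(Q)-E_0(u)\leq\|Q\|_{\dot H^1}^2\bigl(\frac12s^2+\frac13s^3\bigr)$ with $s=\|u\|_{\dot H^1}/\|Q\|_{\dot H^1}-1$. Since $|s|\lesssim\delta+\int V|u|^2$ and $\int V|u|^2$ is small by Lemma~\ref{qualitative-modulation}, this gives $\int V|u|^2\lesssim\delta^2$ directly.
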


The arguments in \cite{AHI,MMZ} also apply on the $K_V<0$ side by using
$|\delta|$; their sign convention for $\delta$ is opposite to ours. In our
convention, the orthogonality conditions and the modulation bounds give
\[
 \delta(t)=2\alpha(t)\|Q\|_{\dot H^1}^2+O(\delta(t)^2),
\]
so $\alpha(t)>0$ when $\eta_0$ is sufficiently small. For $\mu=2$, the
estimate for $\alpha'$ follows by differentiating the orthogonality
conditions as in the proof of \cite[Proposition~4.4]{AHI}; the potential
terms are controlled by \eqref{weighted-hardy} and
\eqref{potential-modulation}.

The explicit form of the inverse-power potential gives the following.

\begin{lemma}\label{power-center-estimate}
Under the assumptions of Proposition~\ref{modulation}, one has
\begin{equation}\label{center-lower-bound}
 (1+|y(t)|)^{-\mu}\lesssim\delta(t)^2.
\end{equation}
\end{lemma}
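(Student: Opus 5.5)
We bound the potential energy from below by its value on the modulated ground-state piece. By \eqref{potential-modulation} this quantity is at most a constant times $\delta(t)^2$.

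Write $u(t,x+y(t))=e^{i(t+\theta(t))}\bigl((1+\alpha(t))Q(x)+h(t,x)\bigr)$ using \eqref{decompz0}. Changing variables gives
\[
 \int_{\R^3}V(x)|u(t,x)|^2\,dx=\int_{\R^3}V(x+y(t))\,\bigl|(1+\alpha)Q+h\bigr|^2\,dx .
\]
Set $g=\alpha Q+h$, so that $(1+\alpha)Q+h=Q+g$ and $\|g\|_{H^1}\sim\delta(t)$ by \eqref{mo10}. We use the elementary inequality $|Q+g|^2\ge\tfrac12|Q|^2-|g|^2$. Combined with \eqref{weighted-hardy} applied to $g$ with shift $y(t)$, it yields
\[
 \int V(x+y)|Q+g|^2\,dx\ \ge\ \tfrac12\int V(x+y)Q(x)^2\,dx-C\|g\|_{\dot H^1}^{\mu}\|g\|_{L^2}^{2-\mu}
 \ge \tfrac12\int V(x+y)Q^2\,dx-C\delta(t)^2 .
\]
Together with \eqref{potential-modulation}, this reduces the claim to
\[
 \int_{\R^3}V(x+y)Q(x)^2\,dx\lesssim \delta(t)^2 .
\]

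It remains to bound $\int V(x+y)Q^2\,dx$ from below by $c(1+|y|)^{-\mu}$ uniformly in $y\in\R^3$. Restrict the integral to the unit ball $|x|\le1$, where $Q\ge c_0>0$ because $Q$ is positive and continuous. On this ball $|x+y|\le 1+|y|$, so $V(x+y)=a|x+y|^{-\mu}\ge a(1+|y|)^{-\mu}$. Hence
\[
 \int V(x+y)Q^2\,dx\ge a c_0^2|B_1|(1+|y|)^{-\mu}.
\]
Combining the two displays gives \eqref{center-lower-bound}. The constants depend only on $a,\mu,Q$, as required.

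The only step needing care is the error term. Here one must make sure the cross term is absorbed rather than estimated as $\delta$ times the main term, which would give only a weaker bound. The inequality $|Q+g|^2\ge\tfrac12Q^2-|g|^2$ handles this, since the leftover is quadratic in $g$ and hence $O(\delta^2)$ by \eqref{weighted-hardy}.
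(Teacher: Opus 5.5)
Your proof is correct and follows the same idea as the paper: compare the upper bound $\int V|u|^2\lesssim\delta^2$ from \eqref{potential-modulation} with a lower bound $\gtrsim(1+|y|)^{-\mu}$, obtained by localizing to a fixed ball around the modulated center, where $V(x+y)\ge a(R+|y|)^{-\mu}$. The only difference is how the perturbation is handled: the paper restricts to the ball and uses the $L^2$-closeness of $(1+\alpha)Q+h$ to $Q$ after shrinking $\eta_0$, whereas you use $|Q+g|^2\ge\frac12Q^2-|g|^2$ together with \eqref{weighted-hardy} to absorb the error into the $O(\delta^2)$ side, so you do not need to decrease $\eta_0$ further.
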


\begin{proof}
Choose $R>0$ such that $\int_{|x|\leq R}Q^2\,dx>0$.  By
\eqref{mo10}, after decreasing $\eta_0$ we have
\[
 \int_{|x|\leq R}|(1+\alpha)Q+h|^2\,dx
 \geq\frac12\int_{|x|\leq R}Q^2\,dx.
\]
Since $|x+y(t)|\leq R+|y(t)|$ on this ball, it follows that
\[
 \int_{\R^3}V(x)|u(t,x)|^2\,dx
 =\int_{\R^3}V(x+y(t))|(1+\alpha)Q+h|^2\,dx
 \gtrsim(1+|y(t)|)^{-\mu}.
\]
The result now follows from \eqref{potential-modulation}.
\end{proof}
\section{Virial identity and blow-up}\label{sec:viride}

In this section, we prove the following theorem by adapting the argument
of Du, Wu and Zhang \cite[Section~2.2]{DWZ}; see also \cite{GWY}.

\begin{theorem}\label{T:main-1}
Let $u:(T_-,T_+)\times\R^3\to\C$ solve \eqref{NLSV}, with
$M(u)=M(Q)$ and $E_V(u)\leq E_0(Q)$. Suppose that, for some $\kappa>0$,
\begin{equation}\label{equ:assump-1}
 \inf_{0\leq t<T_+}\delta(u(t))\geq\kappa.
\end{equation}
Then $T_+<\infty$, or $T_+=\infty$ and
$\limsup_{t\to\infty}\|u(t)\|_{\dot H^1}=\infty$.
\end{theorem}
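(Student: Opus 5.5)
We argue by contradiction, following the localized virial argument of Du, Wu and Zhang. Suppose $T_+=\infty$ and $\sup_{t\geq0}\|u(t)\|_{\dot H^1}\leq A<\infty$. By conservation of mass and \eqref{weighted-hardy}, this gives $\sup_{t\ge0}\|u(t)\|_{H^1}\lesssim_A 1$.

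\emph{Step 1: localized virial identity.} Let $\phi$ be a radial function with $\phi(x)=|x|^2$ for $|x|\leq1$, $\phi$ constant for $|x|\geq2$, and $\partial_r^2\phi\leq2$. Set $\phi_R(x)=R^2\phi(x/R)$ and
\[
 I_R(t)=2\,\Im\int_{\R^3}\bar u\,\nabla u\cdot\nabla\phi_R\,dx .
\]
Then $|I_R(t)|\lesssim R\|u\|_{L^2}\|\nabla u\|_{L^2}\lesssim_A R$ uniformly in $t$. The standard computation, with the potential term contributing $-\int\nabla\phi_R\cdot\nabla V|u|^2$, gives
\[
 I_R'(t)=8K_V(u(t))+\mathcal E_R(t).
\]
The error $\mathcal E_R$ collects three contributions from the region $|x|\geq R$: the kinetic term (of favorable sign by the convexity bound $\partial_r^2\phi\le2$), the term $\int\Delta^2\phi_R|u|^2=O(R^{-2})$, the quartic term $\lesssim\int_{|x|\ge R}|u|^4$, and the potential term. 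On $|x|\ge R$ we have $|x\cdot\nabla V|\lesssim R^{-\mu}$, so the potential term is $O(R^{-\mu})$. Hence
\[
 I_R'(t)\leq 8K_V(u(t))+C\int_{|x|\geq R}|u(t)|^4\,dx+O(R^{-\mu}+R^{-2}).
\]

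\emph{Step 2: negativity.} By \eqref{energy-virial-general} and $E_V\leq E_0(Q)$ we get $8K_V(u(t))\leq-4\delta(t)\leq-4\kappa$. The main obstacle is the tail $\int_{|x|\ge R}|u|^4$: without radiality or finite variance there is no Strauss-type decay, and mass may drift to spatial infinity. I would follow \cite{DWZ} and control the outflow of mass. The radial Strauss inequality is unavailable here, so I would instead use the $L^2$ localized mass estimate: for $\chi_R$ a cutoff supported in $|x|\le R$,
\[
 \Bigl|\frac{d}{dt}\int\chi_R|u|^2\Bigr|\lesssim R^{-1}\|u\|_{L^2}\|\nabla u\|_{L^2}\lesssim_A R^{-1}.
\]
Hence on a time interval $[0,T]$ with $T=\varepsilon R$, the mass outside $|x|\geq R$ stays below $\int_{|x|\geq R/2}|u_0|^2+C\varepsilon$, which is small once $R$ is large and $\varepsilon$ is small. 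Gagliardo--Nirenberg applied to a cutoff of $u$, together with the $H^1$ bound, then gives
\[
 \int_{|x|\geq R}|u|^4\lesssim_A\Bigl(\int_{|x|\geq R/2}|u|^2\Bigr)^{1/2}\ll\kappa
\]
on $[0,T]$.

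\emph{Step 3: contradiction.} Fix $\varepsilon$ small and then $R$ large, so that $I_R'\leq-2\kappa$ on $[0,\varepsilon R]$. Integrating,
\[
 I_R(\varepsilon R)\leq I_R(0)-2\kappa\varepsilon R .
\]
This alone does not contradict $|I_R|\lesssim R$, since both sides are of order $R$. The remedy is to exploit that the constants in Step 2 depend on $\varepsilon$ only through $C\varepsilon$. Choose $\varepsilon$ so that $C_A\varepsilon\ll\kappa$, and bound $|I_R(t)|$ by $R$ times a quantity tending to zero. For this, split $I_R$ into the region $|x|\le\sqrt{\varepsilon}R$, which contributes $\lesssim\sqrt{\varepsilon}R$, and the region $|x|>\sqrt{\varepsilon}R$, which contributes $\lesssim R\cdot(\text{exterior mass})^{1/2}$. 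The exterior mass at radius $\sqrt{\varepsilon}R$ is controlled by the same mass-outflow estimate with $R$ replaced by $\sqrt{\varepsilon}R$. This yields $|I_R(t)|\le o_\varepsilon(1)\,R$ on $[0,\varepsilon R]$, where the $o_\varepsilon(1)\,R$ is strictly smaller than $2\kappa\varepsilon R$ after first taking $R\to\infty$ and then $\varepsilon\to0$, which contradicts the inequality above. Arranging this bookkeeping, specifically the order of limits so that the virial gain $\kappa\varepsilon R$ beats the bound on $I_R$, is the delicate point, and it is exactly what \cite[Section~2.2]{DWZ} carries out. I expect it to transfer verbatim, because the potential enters only through the harmless $O(R^{-\mu})$ term and the sign of $K_V$.
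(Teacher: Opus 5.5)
Your Steps 1 and 2 are sound and essentially reproduce the paper's Lemmas~\ref{lem:copl2} and \ref{L:virial}. The only differences there are cosmetic: the paper bounds the quartic tail via H\"older with the $L^6$ bound and Young, and it uses the sign of the potential error rather than an $O(R^{-\mu})$ bound.

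The gap is in Step 3, and no order of limits repairs it. Split at radius $\sqrt\varepsilon R$. The inner region contributes $\lesssim_A\sqrt\varepsilon R$. At $t=\varepsilon R$ the mass-outflow estimate gives exterior mass $\lesssim o_R(1)+\varepsilon R/(\sqrt\varepsilon R)=o_R(1)+\sqrt\varepsilon$, so the outer region contributes $\lesssim_A\varepsilon^{1/4}R$. Hence $|I_R(\varepsilon R)|\lesssim_A(\varepsilon^{1/4}+o_R(1))R$. This is much larger than the gain $2\kappa\varepsilon R$ as $\varepsilon\to0$, and you cannot take $\varepsilon$ large because Step 2 needs $C\varepsilon\ll\kappa$. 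Your "$o_\varepsilon(1)$" is $\varepsilon^{1/4}$, not $o(\varepsilon)$.

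Optimizing the splitting radius, or summing dyadically, only improves this to $|I_R(T)|\lesssim_A(RT)^{1/2}$. That is $\ll T$ only when $T\gg R$, which is incompatible with the constraint $T\lesssim\varepsilon R$ needed to keep the exterior mass small. So bounding the momentum-type quantity at the final time cannot work with these estimates, and this is not what \cite{DWZ} does.

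The missing idea is to integrate once more and use positivity of the truncated variance instead of any bound at the final time. Your $I_R$ is the time derivative of $\widetilde I_R(t)=\int\phi_R|u(t)|^2\,dx\geq0$. Integrating $I_R(t)\leq I_R(0)-2\kappa t$ over $[0,\varepsilon R]$ gives $\widetilde I_R(\varepsilon R)\leq\widetilde I_R(0)+\varepsilon R\,I_R(0)-\kappa\varepsilon^2R^2$.

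Now the only quantities to control live at $t=0$, where $u_0$ is a fixed function. Splitting at $|x|=\sqrt R$ gives $\widetilde I_R(0)\lesssim RM(u_0)+R^2\int_{|x|>\sqrt R}|u_0|^2=o(R^2)$ and $|I_R(0)|=o(R)$. Therefore $0\leq\widetilde I_R(\varepsilon R)\leq o(R^2)-\kappa\varepsilon^2R^2<0$ for $R$ large, which is the paper's argument.

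The virial gain becomes quadratic in time, $\kappa\varepsilon^2R^2$, while every error term is $o(R^2)$ because of the fixed initial data. No information about $u(\varepsilon R)$ is needed beyond $\widetilde I_R\geq0$.
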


We argue by contradiction. Suppose that $u$ is forward global and
\begin{equation}\label{assumcon}
 M(u)=M(Q),\qquad E_V(u)\leq E_0(Q),\qquad
 \inf_{t\geq0}\delta(u(t))\geq\kappa,
 \qquad \sup_{t\geq0}\|u(t)\|_{\dot H^1}\leq C_1.
\end{equation}
Sobolev embedding gives
\begin{equation}\label{assumcon2}
 \sup_{t\geq0}\|u(t)\|_{L_x^6}\leq C_6
\end{equation}
for some $C_6<\infty$.

\subsection{Virial identity}
Given a real weight $w$, define
\[
 I(t;w)=\int_{\R^3}w(x)|u(t,x)|^2\,dx.
\]
For the smooth weights used below, the virial identities are
\begin{align}
 \partial_t I(t;w)
 &=2\Im\int\overline u\,\nabla u\cdot\nabla w\,dx,\label{virial-first}\\
 \partial_{tt} I(t;w)
 &=4\Re\sum_{j,k=1}^3\int w_{jk}\,\overline{u_j}u_k\,dx
   -\int\Delta^2w\,|u|^2\,dx-\int\Delta w\,|u|^4\,dx\notag\\
 &\quad-2\int\nabla w\cdot\nabla V\,|u|^2\,dx.\label{virial}
\end{align}
Here $u_j=\partial_j u$ and
\begin{equation}\label{potential-gradient}
 \nabla V(x)=-a\mu\frac{x}{|x|^{\mu+2}}.
\end{equation}
These formulas follow by approximation from the identities for smooth
solutions; see \cite{AHI,MMZ}. If $\nabla w=O(|x|)$ near the origin, the
potential term is bounded by a multiple of $\int V|u|^2$, which is finite
by \eqref{weighted-hardy}. We will also use weights whose gradient
vanishes near the origin.

\subsection{A localized mass estimate}
For $R>1$, choose a smooth a radial cutoff $\chi_R$ such that
\[
 \chi_R(x)=0\quad(|x|\leq R/2),\qquad
 \chi_R(x)=1\quad(|x|\geq R),
\]
with $0\leq\chi_R\leq1$ and $|\nabla\chi_R|\leq C/R$.
The local conservation of mass gives
\[
 \left|\frac{d}{dt}\int\chi_R|u(t)|^2\,dx\right|
 \leq\frac{C}{R}\|u(t)\|_{L_x^2}\|u(t)\|_{\dot H^1}.
\]
After integration, we obtain the following estimate.

\begin{lemma}\label{lem:copl2}
Under \eqref{assumcon}, for all $t\geq0$ and $R>1$,
\begin{equation}\label{equ:compacl2}
 \int_{|x|\geq R}|u(t,x)|^2\,dx
 \leq\int_{|x|\geq R/2}|u_0(x)|^2\,dx
          +\frac{CC_1M(Q)^{1/2}}{R}\,t.
\end{equation}
\end{lemma}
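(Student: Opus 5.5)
We differentiate the localized mass $\int\chi_R|u(t)|^2\,dx$ in time, bound the derivative using the assumptions \eqref{assumcon}, integrate over $[0,t]$, and then compare $\chi_R$ with indicator functions at both ends.

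\emph{Derivative bound.} Apply the first virial identity \eqref{virial-first} with the weight $w=\chi_R$. This gives
\[
 \frac{d}{dt}\int\chi_R|u(t)|^2\,dx=2\Im\int\overline u\,\nabla u\cdot\nabla\chi_R\,dx .
\]
The potential does not appear here, since $V$ is real and enters the equation only through a real multiplication operator. Since $\chi_R$ is smooth, bounded, and has bounded gradient, the identity is justified by approximation, as stated after \eqref{virial}. Using $|\nabla\chi_R|\le C/R$ and Cauchy--Schwarz, we get
\[
 \left|\frac{d}{dt}\int\chi_R|u(t)|^2\,dx\right|
 \le \frac{2C}{R}\|u(t)\|_{L_x^2}\|\nabla u(t)\|_{L_x^2}.
\]
By mass conservation, $\|u(t)\|_{L_x^2}=M(Q)^{1/2}$. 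By \eqref{assumcon}, $\|u(t)\|_{\dot H^1}\le C_1$ for all $t\ge0$. So the derivative is bounded by $CC_1M(Q)^{1/2}/R$, with the factor $2$ absorbed into $C$.

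\emph{Integration and comparison.} Integrating from $0$ to $t$ gives
\[
 \int\chi_R|u(t)|^2\,dx\le\int\chi_R|u_0|^2\,dx+\frac{CC_1M(Q)^{1/2}}{R}\,t .
\]
Since $0\le\chi_R\le1$ and $\chi_R=1$ on $|x|\ge R$, the left side is at least $\int_{|x|\ge R}|u(t)|^2\,dx$. Since $\chi_R$ vanishes on $|x|\le R/2$, the first term on the right is at most $\int_{|x|\ge R/2}|u_0|^2\,dx$. Combining these two comparisons gives \eqref{equ:compacl2}.

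The only point needing any care is the rigorous justification of the time derivative for $H^1$ solutions. This follows from the standard regularization argument already cited for \eqref{virial-first} in \cite{AHI,MMZ}, and it is easier here because the weight is bounded with bounded gradient. There is no genuine obstacle.
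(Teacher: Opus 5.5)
Your proposal is correct and follows the paper's argument: the derivative of $\int\chi_R|u(t)|^2\,dx$ is bounded via $|\nabla\chi_R|\le C/R$ and Cauchy--Schwarz, using mass conservation and the uniform $\dot H^1$ bound. You then integrate in time and compare $\chi_R$ with the indicators of $\{|x|\ge R\}$ and $\{|x|\ge R/2\}$, which the paper leaves implicit in ``after integration.''
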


\subsection{Truncated virial identity}
Let $\phi$ be a smooth, nonnegative radial function such that
\begin{equation}\label{phi-virial}
 \phi(r)=r^2\ (r\leq1),\qquad
 \phi(r)\text{ is constant for }r\geq3,
 \qquad 0\leq\phi'(r)\leq2r,\quad\phi''(r)\leq2.
\end{equation}
Set $w_R(x)=R^2\phi(|x|/R)$ and $I_R(t)=I(t;w_R)$.
In particular, $\nabla^2w_R\leq2\operatorname{Id}$ as quadratic forms,
$|\Delta^2w_R|\lesssim R^{-2}$, and $w_R$ agrees with $|x|^2$ on
$|x|\leq R$.

\begin{lemma}\label{L:virial}
For $R>1$,
\begin{equation}\label{virial-upper}
 I_R''(t)\leq8K_V(u(t))
       +C\int_{|x|>R}\bigl(R^{-2}|u(t,x)|^2+|u(t,x)|^4\bigr)\,dx.
\end{equation}
Under \eqref{assumcon}--\eqref{assumcon2}, there is
$C_2=C_2(\kappa,C_6)>0$ such that
\begin{equation}\label{equ:vttpaest}
 I_R''(t)\leq-3\kappa+C_2\int_{|x|>R}|u(t,x)|^2\,dx.
\end{equation}
\end{lemma}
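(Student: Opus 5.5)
We start from the virial identity \eqref{virial} with $w=w_R$ and compare it with the exact identity for $w=|x|^2$, which gives $8K_V(u)$. On $|x|\le R$ the weight $w_R$ equals $|x|^2$, so there $w_{jk}=2\delta_{jk}$, $\Delta w_R=6$, $\Delta^2w_R=0$ and $\nabla w_R=2x$. All these terms coincide with the $|x|^2$ computation. We therefore write
\[
 I_R''(t)=8K_V(u(t))+\mathcal E_R(t),
\]
where $\mathcal E_R$ collects the differences supported on $|x|>R$. We then bound $\mathcal E_R$ term by term.

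For the kinetic term, $4\Re\int w_{jk}\overline{u_j}u_k\le 8\int|\nabla u|^2$, since $\nabla^2w_R\le2\operatorname{Id}$. Hence its difference from $8\int|\nabla u|^2$ is $\le0$ on $|x|>R$. The bilaplacian term contributes at most $CR^{-2}\int_{|x|>R}|u|^2$. The quartic term contributes $\int_{|x|>R}(6-\Delta w_R)|u|^4$, and $|\Delta w_R|\lesssim1$, so it is bounded by $C\int_{|x|>R}|u|^4$.

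For the potential term, we use \eqref{potential-gradient}:
\[
 -2\nabla w_R\cdot\nabla V=2a\mu\frac{\phi'(|x|/R)R}{|x|^{\mu+1}}\le 4a\mu|x|^{-\mu}=2\mu\, V\cdot 2,
\]
because $0\le\phi'(r)\le2r$. Compared with $-2(2x)\cdot\nabla V=4\mu V$, this difference is also $\le0$ on $|x|>R$.

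The sign bookkeeping needs care. The contribution of $V$ to $8K_V$ is $8\cdot\frac\mu2\int V|u|^2=4\mu\int V|u|^2$, which matches the pure-$|x|^2$ potential term. Since the radial weight $\phi'(r)R/|x|\le2$ makes the truncated potential term smaller, that difference is indeed nonpositive. This proves \eqref{virial-upper}.

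For \eqref{equ:vttpaest}, we combine \eqref{energy-virial-general} with $E_V(u)\le E_0(Q)$ to get $8K_V(u(t))\le-4\delta(u(t))\le-4\kappa$ by \eqref{assumcon}.

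It remains to absorb the quartic tail, which we expect to be the main obstacle, since it has no smallness a priori. We use Hölder and interpolation:
\[
 \int_{|x|>R}|u|^4\le\Bigl(\int_{|x|>R}|u|^2\Bigr)^{1/2}\Bigl(\int_{|x|>R}|u|^6\Bigr)^{1/2}\le C_6^3\Bigl(\int_{|x|>R}|u|^2\Bigr)^{1/2}.
\]
Write $m=\int_{|x|>R}|u|^2$. By Young's inequality, $C C_6^3 m^{1/2}\le\kappa+\frac{C^2C_6^6}{4\kappa}m$, and the $R^{-2}$ term is $\le Cm$ since $R>1$. Altogether
\[
 I_R''\le-4\kappa+\kappa+C_2m=-3\kappa+C_2 m,
\]
with $C_2=C+C^2C_6^6/(4\kappa)$, which depends only on $\kappa$ and $C_6$ (and fixed data). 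This gives \eqref{equ:vttpaest}.
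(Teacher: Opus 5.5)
Your proposal is correct and follows essentially the same route as the paper: subtract $8K_V(u)$ from \eqref{virial}, use $\nabla^2 w_R\le 2\,\mathrm{Id}$ and $\phi'(r)\le 2r$ to discard the kinetic and potential differences as nonpositive, and bound the bilaplacian and quartic terms, which are supported in $|x|>R$. You then obtain \eqref{equ:vttpaest} from $8K_V\le-4\kappa$ via \eqref{energy-virial-general}, together with H\"older and Young on the quartic tail.
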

\begin{proof}
Subtracting $8K_V(u)$ from \eqref{virial} gives
\begin{align*}
 I_R''-8K_V(u)
 &=4\Re\sum_{j,k}\int
       \bigl((w_R)_{jk}-2\delta_{jk}\bigr)\overline{u_j}u_k\,dx
   -\int\Delta^2w_R|u|^2\,dx\\
 &\quad+\int(6-\Delta w_R)|u|^4\,dx
   +2a\mu\int\left(\frac{w_R'(r)}r-2\right)
                     \frac{|u|^2}{r^\mu}\,dx.
\end{align*}
The first and last terms are nonpositive by \eqref{phi-virial}.
The other terms are supported in $|x|>R$ and give
\eqref{virial-upper}.
By \eqref{energy-virial-general}, $8K_V(u(t))\leq-4\kappa$.
Also, H\"older's and Young's inequalities yield
\[
 C\int_{|x|>R}|u|^4\,dx
 \leq CC_6^3\|u\|_{L_x^2(|x|>R)}
 \leq\kappa+C_{\kappa,C_6}\|u\|_{L_x^2(|x|>R)}^2.
\]
Since $R>1$, \eqref{equ:vttpaest} follows.
\end{proof}

\subsection{Proof of Theorem~\ref{T:main-1}}
Combining Lemmas~\ref{lem:copl2} and \ref{L:virial}, and taking $R$
sufficiently large, gives
\begin{equation}\label{virial-cubic-bound}
 I_R''(t)\leq-2\kappa+C_3t/R\qquad(t\geq0),
\end{equation}
where $C_3$ is independent of $R$ and $t$. Thus
\begin{equation}\label{equ:yrest}
 I_R(t)\leq I_R(0)+tI_R'(0)-\kappa t^2+\frac{C_3}{6R}t^3.
\end{equation}
On the other hand,
\begin{align*}
 I_R(0)
 &\lesssim
 \int_{|x|\leq\sqrt R}|x|^2|u_0(x)|^2\,dx
 +R^2\int_{|x|\geq\sqrt R}|u_0(x)|^2\,dx\\
 &\lesssim RM(u_0)
 +R^2\int_{|x|\geq\sqrt R}|u_0(x)|^2\,dx
 =o_R(1)R^2.
\end{align*}
Similarly, by \eqref{virial-first},
\begin{align*}
 |I_R'(0)|
 &\lesssim \sqrt R\|u_0\|_{L_x^2}\|\nabla u_0\|_{L_x^2}
 +R\|u_0\|_{L_x^2(|x|\geq\sqrt R)}
     \|\nabla u_0\|_{L_x^2}\\
 &=o_R(1)R.
\end{align*}
Choose a fixed $\varepsilon>0$ such that $C_3\varepsilon/6\leq\kappa/2$.
Substitute $t=\varepsilon R$ in \eqref{equ:yrest}. We obtain
\[
 0\leq I_R(\varepsilon R)
 \leq-\frac\kappa2\varepsilon^2R^2+o(R^2)<0
\]
for all sufficiently large $R$, a contradiction.
This proves Theorem~\ref{T:main-1}.

\begin{corollary}\label{subthreshold-supercritical}
Suppose that $v_0\in H^1(\R^3)$ satisfies
\begin{equation}\label{subthreshold-supercritical-assumptions}
 M(v_0)E_V(v_0)<M(Q)E_0(Q),
 \qquad K_V(v_0)<0.
\end{equation}
Then, in each time direction, the corresponding solution to
\eqref{NLSV} either blows up in finite time or grows up.
\end{corollary}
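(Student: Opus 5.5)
The plan is to reduce Corollary~\ref{subthreshold-supercritical} to Theorem~\ref{T:main-1} by rescaling and by the uniform gap of Lemma~\ref{below-threshold-sign}. By time-reversal symmetry of \eqref{NLSV}, it suffices to treat the forward direction.

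\textbf{Step 1: rescaling.} First normalize the mass. Set $\lambda=M(v_0)/M(Q)$ and $u_0(x)=\lambda v_0(\lambda x)$, exactly as in the proof of Corollary~\ref{product-threshold}. Then $u_0$ solves the problem with potential $V_\lambda=a\lambda^{2-\mu}|x|^{-\mu}$, which is again of the admissible form with $a$ replaced by $a\lambda^{2-\mu}>0$. Moreover
\[
 M(u_0)=M(Q),\qquad E_{V_\lambda}(u_0)=\lambda^{-1}\cdot\frac{M(v_0)E_V(v_0)}{M(Q)}\cdot\lambda\,\frac{1}{\lambda}\cdots,
\]
and more simply, since $M(u_0)E_{V_\lambda}(u_0)=M(v_0)E_V(v_0)$ (the scaling is chosen to preserve this product), we get $E_{V_\lambda}(u_0)<E_0(Q)$. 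The sign is also preserved: $K_{V_\lambda}(u_0)=\lambda K_V(v_0)<0$. The scaling preserves both finite-time blow-up and grow-up, so it suffices to prove the claim for $u$.

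\textbf{Step 2: uniform gap.} Lemma~\ref{below-threshold-sign}, applied with $V_\lambda$ (all preliminary results hold for any $a>0$), gives
\[
 \|u(t)\|_{\dot H_{V_\lambda}^1}^2\geq(1+c)\|Q\|_{\dot H^1}^2
\]
on the whole lifespan, with $c=c(E_0(Q)-E_{V_\lambda}(u_0))>0$. Hence $\delta(u(t))\geq c\|Q\|_{\dot H^1}^2=:\kappa>0$ for all $t\in[0,T_+)$, so hypothesis \eqref{equ:assump-1} holds. The other hypotheses of Theorem~\ref{T:main-1} are also satisfied, since $M(u)=M(Q)$ and $E_{V_\lambda}(u)<E_0(Q)$.

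\textbf{Step 3: conclusion.} Theorem~\ref{T:main-1} now yields either $T_+<\infty$, which is finite-time blow-up, or $T_+=\infty$ with $\limsup\|u(t)\|_{\dot H^1}=\infty$, which is grow-up. Undoing the scaling and applying the same argument to $\overline{u(-t)}$ gives the backward direction.

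The only point needing care is the bookkeeping in Step 1: one must check that the scaling maps $E_V$ to $E_{V_\lambda}$ consistently with the product $M\cdot E$. There is no real obstacle beyond this.
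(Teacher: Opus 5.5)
Your proposal is correct and follows the paper's proof essentially step for step. You rescale to mass $M(Q)$ with the potential $V_\lambda=a\lambda^{2-\mu}|x|^{-\mu}$, obtain the uniform gap $\delta\geq c\|Q\|_{\dot H^1}^2$ from Lemma~\ref{below-threshold-sign}, and conclude with Theorem~\ref{T:main-1} in each time direction. The only blemish is the garbled first display in Step 1, which should simply read $E_{V_\lambda}(u_0)=\lambda E_V(v_0)=M(v_0)E_V(v_0)/M(Q)<E_0(Q)$.
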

\begin{proof}
Let $\lambda=M(v_0)/M(Q)$ and set
\[
 w_0(x)=\lambda v_0(\lambda x),
 \qquad
 V_\lambda(x)=\lambda^2V(\lambda x)
 =a\lambda^{2-\mu}|x|^{-\mu}.
\]
Then
\[
 M(w_0)=M(Q),\qquad E_{V_\lambda}(w_0)<E_0(Q),
 \qquad K_{V_\lambda}(w_0)<0.
\]
Lemma~\ref{below-threshold-sign} gives a constant $c>0$ such that the
corresponding solution satisfies
\[
 \|w(t)\|_{\dot H_{V_\lambda}^1}^2
 \geq(1+c)\|Q\|_{\dot H^1}^2
\]
throughout its lifespan. Theorem~\ref{T:main-1}, applied forward and
backward in time, gives blow-up or grow-up in each direction. Scaling
back proves the result for $v$.
\end{proof}

\begin{corollary}\label{sequencec}
If $u$ is forward global and satisfies \eqref{normalized-threshold} and
\eqref{bounded-solution}, then there exists $t_n\to\infty$ such that
$\delta(u(t_n))\to0$.
\end{corollary}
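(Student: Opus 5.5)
We argue by contradiction, using Theorem~\ref{T:main-1} directly. Let $u$ be forward global and satisfy \eqref{normalized-threshold} and \eqref{bounded-solution}. By Lemma~\ref{threshold-sign}, $\delta(u(t))>0$ for every $t\geq0$. Suppose the conclusion fails, so that no sequence $t_n\to\infty$ has $\delta(u(t_n))\to0$. Then $\liminf_{t\to\infty}\delta(u(t))>0$, and hence there exist $T_0\geq0$ and $\kappa_1>0$ such that $\delta(u(t))\geq\kappa_1$ for all $t\geq T_0$.

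Next we extend this lower bound to the whole half-line $[0,\infty)$. On the compact interval $[0,T_0]$, the map $t\mapsto\delta(u(t))$ is continuous, since $u\in C([0,T_0];H^1)$ and $\|\cdot\|_{\dot H_V^1}$ is continuous on $H^1$ by \eqref{weighted-hardy}. It is also strictly positive there, so it attains a positive minimum $\kappa_2$. Setting $\kappa=\min(\kappa_1,\kappa_2)>0$ gives $\inf_{t\geq0}\delta(u(t))\geq\kappa$. Alternatively, we could apply Theorem~\ref{T:main-1} to the time translate $u(\cdot+T_0)$. Its mass and energy are unchanged by conservation, and its forward lifespan is still infinite, so this route avoids the compactness step.

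Now Theorem~\ref{T:main-1} applies, because $M(u)=M(Q)$ and $E_V(u)=E_0(Q)\leq E_0(Q)$. Since $T_+=\infty$, the theorem gives $\limsup_{t\to\infty}\|u(t)\|_{\dot H^1}=\infty$. This contradicts \eqref{bounded-solution}, so the sequence $t_n$ exists.

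No step here is a genuine obstacle. The only points needing care are the positivity of $\delta$ on the whole lifespan, which comes from Lemma~\ref{threshold-sign}, and the continuity of $\delta$ in time, which together turn a failure of $\liminf\delta=0$ into a uniform lower bound as required by \eqref{equ:assump-1}.
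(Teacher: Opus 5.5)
Your proof is correct and is essentially the paper's argument: assuming $\liminf_{t\to\infty}\delta(u(t))>0$, you obtain the uniform lower bound \eqref{equ:assump-1}, Theorem~\ref{T:main-1} then contradicts \eqref{bounded-solution}, and Lemma~\ref{threshold-sign} supplies $\delta>0$. The paper gets the lower bound via the time translation you list as your alternative, so your continuity-and-compactness step on $[0,T_0]$ is a harmless variant that the paper does not need.
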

\begin{proof}
If $\liminf_{t\to\infty}\delta(u(t))>0$, then after a time translation
$u$ satisfies \eqref{equ:assump-1}. Theorem~\ref{T:main-1} contradicts
\eqref{bounded-solution}. Lemma~\ref{threshold-sign} gives
$\delta(u(t))>0$, so the required sequence exists.
\end{proof}

\section{Compactness for nonscattering solutions}\label{sec:compactness}

The next result states that there exist scattering solutions to
\eqref{NLSV} corresponding to initial data below the \eqref{NLS0}
threshold that are translated sufficiently far from the origin.

\begin{lemma}[Nonlinear embedding I]\label{nonlinear-embedding-I}
Suppose that $\{t_n\}\subset\R$ satisfies $t_n\equiv0$ or
$t_n\to\pm\infty$, and let $\{x_n\}\subset\R^3$ satisfy
$|x_n|\to\infty$. Let $\phi\in H^1(\R^3)$ satisfy
\[
 M(\phi)E_0(\phi)<M(Q)E_0(Q),\qquad
 \|\phi\|_{L_x^2}\|\phi\|_{\dot H^1}
 <\|Q\|_{L_x^2}\|Q\|_{\dot H^1}
 \quad\text{if }t_n\equiv0,
\]
and
\[
 \frac12\|\phi\|_{L_x^2}^2\|\phi\|_{\dot H^1}^2
 <M(Q)E_0(Q)
 \quad\text{if }t_n\to\pm\infty.
\]
Define
\[
 \phi_n=e^{it_nH}[\phi(\cdot-x_n)].
\]
Then, for all sufficiently large $n$, there exists a global solution
$v_n$ to \eqref{NLSV} with $v_n(0)=\phi_n$ satisfying
\[
 \|v_n\|_{S^1(\R)}\lesssim1,
\]
where the implicit constant depends on $\phi$. Furthermore,
for every $\varepsilon>0$, there exist
$N_\varepsilon\in\mathbb N$ and
$\psi_\varepsilon\in C_c^\infty(\R\times\R^3)$ such that, whenever
$n\geq N_\varepsilon$,
\[
 \|v_n-\psi_\varepsilon(\,\cdot-t_n,\cdot-x_n)\|_
 {X(\R\times\R^3)}<\varepsilon,
\]
where
\[
 \begin{aligned}
 X\in\{&L_{t,x}^5,\quad L_{t,x}^{10/3},\quad
 L_t^5\dot H_x^{1/2,30/11},\\
 &L_t^{30/7}L_x^{90/31},\quad
 L_t^{30/7}\dot H_x^{31/60,90/31}\}.
 \end{aligned}
\]
\end{lemma}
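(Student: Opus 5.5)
We follow the standard nonlinear-embedding argument of Killip--Murphy--Visan--Zheng \cite{KMVZ} and \cite{MMZ}. The plan is to build the free solution first, use it (translated to $x_n$) as an approximate solution of \eqref{NLSV}, and close with the stability lemmas.

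\emph{Step 1: the free solution.} Let $w$ be the solution to \eqref{NLS0} attached to $\phi$.
\begin{itemize}
\item If $t_n\equiv0$, take $w(0)=\phi$. The hypotheses place $\phi$ in the subthreshold scattering region of \cite{DHR,HR}, so $w$ is global with $\|w\|_{L^5_{t,x}}<\infty$.
\item If $t_n\to\pm\infty$, let $w$ be the solution that scatters to $e^{it\Delta}\phi$ as $t\to\pm\infty$. Since $\|e^{it\Delta}\phi\|_{L^4}\to0$, we get $M(w)E_0(w)=\tfrac12 M(\phi)\|\phi\|_{\dot H^1}^2<M(Q)E_0(Q)$. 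Moreover $\|w(t)\|_{L^2}\|w(t)\|_{\dot H^1}$ is below the $Q$-level for $|t|$ large. By Lemma~\ref{variational-dichotomy} (the $V=0$ case) and subthreshold scattering, $w$ is global and scatters in both directions.
\end{itemize}
In both cases $w\in S^1(\R)$, with $\nabla^{s}w$ bounded in all Strichartz norms. By density, for each $\varepsilon$ we pick $\psi_\varepsilon\in C_c^\infty$ approximating $w$ in the listed $X$-norms. We also truncate in frequency and space for the error estimates below.

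\emph{Step 2: the approximate solution.} Let $\chi_n$ be a cutoff equal to $1$ on $\{|x-x_n|\le |x_n|/4\}$ and supported in $\{|x-x_n|\le|x_n|/2\}$, so the potential is $O(|x_n|^{-\mu})$ and smooth there. Fix $T$ large and define
\[
\tilde v_n(t)=\begin{cases}\chi_n\,w_T(t+t_n,x-x_n), & |t+t_n|\le T,\\ e^{-i(t-(T-t_n))H}\tilde v_n(T-t_n), & t+t_n>T,\end{cases}
\]
with the analogous backward extension. Here $w_T$ is $w$ with smooth, compactly supported (in frequency and space) data. We check three things.
\begin{itemize}
\item \emph{Data matching.} $\|\tilde v_n(0)-\phi_n\|_{H^1}\to0$ as $n\to\infty$ and then $T\to\infty$. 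When $t_n\to\pm\infty$ this uses the convergence $e^{itH}[f(\cdot-x_n)]-e^{it\Delta}f(\cdot-x_n)\to0$, uniform on compact time intervals, for translates escaping to infinity (from \cite{KMVZ}). It also uses the scattering of $w$ and Strichartz for $e^{-itH}$.
\item \emph{Uniform bounds.} $\tilde v_n$ is bounded in $L^5_{t,x}$ and $S^1$ uniformly in $n$.
\item \emph{Small error.} The error $e_n$ is small in $N^{1/2}$ (or $\dot N^{1/2}$ when $\mu=2$). On the middle interval, $e_n=V\tilde v_n+(\text{cutoff commutator terms})+(|\chi_n|^2\chi_n-\chi_n)|w|^2w$. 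The first term is $O(|x_n|^{-\mu})$ on the support, times $\|w\|$ on an interval of length $2T$. The commutator terms are $O(|x_n|^{-1})$. The last term is supported where $|x-x_n|\gtrsim|x_n|$, where $w_T$ is small by spatial truncation. On the outer intervals, $e_n=-|\tilde v_n|^2\tilde v_n$. This is small in $N^{1/2}$ because $\|e^{-itH}\tilde v_n(T-t_n)\|_{L^5((T-t_n,\infty))}$ is small for $T$ large. That smallness follows from the smallness of $\|e^{it\Delta}w(T)\|_{L^5((0,\infty))}$ combined with the same $e^{itH}$ versus $e^{it\Delta}$ comparison.
\end{itemize}

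\emph{Step 3: conclusion and main obstacle.} Lemma~\ref{stability} (for $\mu=2$) or Lemma~\ref{stability-inverse-power} (for $1<\mu<2$) yields global $v_n$ with $v_n(0)=\phi_n$ and $\|v_n\|_{S^1}\lesssim1$. It also gives $\|v_n-\tilde v_n\|_{S^{1/2}}\to0$. Interpolating with the $S^1$ bound and using Sobolev embedding gives closeness in each $X$-norm; together with $\|\tilde v_n-w(\cdot+t_n,\cdot-x_n)\|_X\to0$ and the choice of $\psi_\varepsilon$, this gives the approximation statement. The main obstacle is the comparison between $e^{itH}$ and $e^{it\Delta}$ for far-translated data over unbounded time ranges, which is needed in the $t_n\to\pm\infty$ case. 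I would import it from \cite[Lemma~3.3]{KMVZ}. For $1<\mu<2$ the needed decay of $V$ at the translate is $|x_n|^{-\mu}$, which is still summable in the required dual Strichartz norm by \eqref{weighted-hardy}, and I would check this carefully.
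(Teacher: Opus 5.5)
Your outline is correct and follows the argument the paper relies on. The paper gives no proof of this lemma; it cites \cite[Theorem~6.1]{KMVZ} for $\mu=2$ and \cite{AHI,GWY} for $1<\mu<2$. The cited proof is exactly your scheme: cutoff, time truncation with the linear $e^{-itH}$ flow outside $[-T,T]$, and stability. The imported input is the global-in-time comparison of $e^{-itH}$ with $e^{it\Delta}$ for escaping translates, and it is needed on the outer time intervals even when $t_n\equiv0$, not only when $t_n\to\pm\infty$. The paper's Lemma~\ref{nonlinear-embedding-II} is the compact-interval version of the same construction.

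Two corrections are needed:
\begin{enumerate}
\item Signs. Since $H=-\Delta+V$, one has $e^{itH}[f(\cdot-x_n)]\approx[e^{-it\Delta}f](\cdot-x_n)$. So for $t_n\to\pm\infty$ the free solution must scatter to $e^{it\Delta}\phi$ as $t\to\mp\infty$, and the approximate solution must be built from $w(t-t_n,x-x_n)$; this matches $\psi_\varepsilon(\cdot-t_n,\cdot-x_n)$ in the statement. With your signs, the data matching at $t=0$ fails as written.
\item Zero-order norms when $\mu=2$. Lemma~\ref{stability} only gives $\dot S^{1/2}$ closeness, not $S^{1/2}$. The norms $L_{t,x}^{10/3}$ and $L_t^{30/7}L_x^{90/31}$ on all of $\R$ sit at a different scaling, so they cannot be reached by interpolating with the $S^1$ bound. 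They need an additional $L^2$-level perturbation argument, which works because the errors are also small at that level.
\end{enumerate}
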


For $1<\mu<2$, see \cite[Lemma~2.5]{AHI} and
\cite[Lemma~2.13]{GWY}; for $\mu=2$, see
\cite[Theorem~6.1]{KMVZ}.

We will also need the following local version, for which the solution to
\eqref{NLS0} is not assumed to scatter.

\begin{lemma}[Nonlinear embedding II]\label{nonlinear-embedding-II}
Let $\phi\in H^1(\R^3)$, let $|x_n|\to\infty$, and let
$v:I\times\R^3\to\C$ be the maximal-lifespan solution to
\eqref{NLS0} with $v(0)=\phi$. Let $J$ be a compact interval contained
in $I$ and containing zero. Then, for all sufficiently large $n$, there
exists a solution $v_n$ to \eqref{NLSV} on $J$ with
\[
 v_n(0)=\phi(\cdot-x_n)
\]
satisfying
\[
 \|v_n\|_{S^1(J)}\lesssim1,
\]
where the implicit constant may depend on $J$ and $\phi$. We also have
\begin{equation}\label{local-embedding-pointwise}
 \|v_n(\,\cdot,\cdot+x_n)-v\|_{L_t^\infty\dot H_x^{1/2}(J)}
 \longrightarrow0.
\end{equation}
Furthermore, for every $\varepsilon>0$, there exist
$N_\varepsilon\in\mathbb N$ and
$\psi_\varepsilon\in C_c^\infty(\R\times\R^3)$ such that, whenever
$n\geq N_\varepsilon$,
\[
 \|v_n-\psi_\varepsilon(\,\cdot,\cdot-x_n)\|_
 {X(J\times\R^3)}<\varepsilon,
\]
where
\[
 \begin{aligned}
 X\in\{&L_{t,x}^5,\quad L_{t,x}^{10/3},\quad
 L_t^5\dot H_x^{1/2,30/11},\\
 &L_t^{30/7}L_x^{90/31},\quad
 L_t^{30/7}\dot H_x^{31/60,90/31}\}.
 \end{aligned}
\]
In particular, the same conclusions hold on $[0,T]$ if $T>0$ and on
$[T,0]$ if $T<0$, whenever $T\in I$.
\end{lemma}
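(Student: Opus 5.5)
The plan is to treat the profile $\widetilde v_n(t,x):=v(t,x-x_n)$ as an approximate solution of \eqref{NLSV} on $J$ and then invoke the stability lemmas: Lemma~\ref{stability} for $\mu=2$ and Lemma~\ref{stability-inverse-power} for $1<\mu<2$. Since $J$ is a compact subinterval of the lifespan of the free solution $v$, we have $v\in C(J;H^1)$ and, by the standard local theory for \eqref{NLS0}, $\|v\|_{L_{t,x}^5(J)}+\|v\|_{L_t^\infty H^1(J)}+\|\langle\nabla\rangle v\|_{L_t^2L_x^6(J)}<\infty$. In fact, by persistence of regularity we may replace $\phi$ by a smooth, decaying approximant at first. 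So $E$ and $L$ in the stability lemma are fixed, and they depend only on $J$ and $\phi$. The function $\widetilde v_n$ solves
\[
(i\partial_t-H)\widetilde v_n=-|\widetilde v_n|^2\widetilde v_n+e_n,\qquad e_n=-V(x)v(t,x-x_n),
\]
and $\widetilde v_n(0)=\phi(\cdot-x_n)=v_n(0)$, so the data mismatch vanishes. Everything therefore reduces to showing $\|e_n\|_{N^{1/2}(J)}\to0$ (or the $\dot N^{1/2}$ version when $\mu=2$).

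The main step is this error estimate. I will put $e_n$ in the dual space $L_t^1H_x^{1/2}$, or in $L_t^{5/3}H^{1/2,30/23}$ when the singularity requires it, and split $V=V\chi_{|x|\le|x_n|/2}+V\chi_{|x|>|x_n|/2}$ using smooth cutoffs. On the far region we have $|V|+|\nabla V|\lesssim|x_n|^{-\mu}$, so the contribution is $O(|J|\,|x_n|^{-\mu}\|v\|_{L^\infty H^1})\to0$. On the near region, $x-x_n$ ranges over $|y|\ge|x_n|/2$ for $y=x-x_n$. To handle it, I will first approximate $v$ on $J$ in $L_t^\infty H^1\cap L_t^2H^{1,6}$ by $\psi\in C_c^\infty(\R\times\R^3)$. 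The error $V(v-\psi)(\cdot-x_n)$ is then controlled uniformly in $n$ by a Hardy-type bound: $\|V f\|_{H^{1/2,r}}\lesssim\|f\|_{H^1}$-type estimates via fractional product rules and Lemma~\ref{sobolev-equivalence}, together with \eqref{weighted-hardy} for the translated weight. Once $|x_n|$ is large, $V\psi(\cdot-x_n)$ has disjoint support from the near region. This density argument should give $\limsup_n\|e_n\|_{N^{1/2}(J)}\lesssim\varepsilon$ for every $\varepsilon>0$. For $\mu=2$ the singularity is critical, so I will use the $\dot N^{1/2}$ space with $L_t^{5/3}\dot H^{1/2,30/23}$, where Hardy's inequality $\||x|^{-1}f\|_{L^2}\lesssim\|\nabla f\|_{L^2}$ still suffices. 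I expect this uniform control of the singular part of $V$ at the origin to be the main technical obstacle. The stability lemma then produces $v_n$ on $J$ with $\|v_n\|_{S^1(J)}\lesssim1$ and $\|v_n-\widetilde v_n\|_{S^{1/2}(J)}\to0$, which gives \eqref{local-embedding-pointwise} after translating by $x_n$.

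For the approximation statement, I take $\psi_\varepsilon\in C_c^\infty(\R\times\R^3)$ with $\|v-\psi_\varepsilon\|_{X(J\times\R^3)}<\varepsilon/2$ for each listed space $X$. This is possible because $v$ lies in each of these spaces on the compact interval $J$: they are all controlled by $L_t^\infty H^1\cap L_t^2H^{1,6}$ via Sobolev embedding and interpolation, and $C_c^\infty$ is dense in them. Translation invariance of the norms then gives $\|\widetilde v_n-\psi_\varepsilon(\cdot,\cdot-x_n)\|_X<\varepsilon/2$. It remains to check $\|v_n-\widetilde v_n\|_X\to0$. The spaces built from $\dot H^{1/2}$-level norms follow from the $\dot S^{1/2}$ convergence through Strichartz and Sobolev embedding. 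The spaces above the $1/2$ regularity level, such as $L_t^{30/7}\dot H^{31/60,90/31}$, follow by interpolating the $S^{1/2}$ convergence with the uniform $S^1$ bound. The final remark, about $[0,T]$ or $[T,0]$ for $T\in I$, is just the special case $J=[0,T]$ or $J=[T,0]$.
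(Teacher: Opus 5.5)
\textbf{There is a genuine gap: without a spatial cutoff, the error term is not small in $\dot N^{1/2}(J)$, and for $9/5\le\mu\le2$ it is not even in that space.}

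With $\widetilde v_n=v(\cdot-x_n)$ and no cutoff, the error is $e_n=-V\,v(t,\cdot-x_n)$. The obstruction is local at the origin. By Sobolev embedding, $\dot H^{1/2,10/7}\subset L^{15/8}$, $\dot H^{1/2,30/23}\subset L^{5/3}$ and $\dot H^{1/2}\subset L^3$. So for a.e.\ $t$, every element of $\dot N^{1/2}(J)$, and a fortiori of $N^{1/2}(J)$, lies in $L^{5/3}_{\mathrm{loc}}$. But $|x|^{-\mu}\notin L^{5/3}(\{|x|<1\})$ once $\mu\ge 9/5$, since $|\nabla|^{1/2}|x|^{-\mu}\sim|x|^{-\mu-1/2}$ is too singular. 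Meanwhile $v(t,\cdot-x_n)$ has no reason to vanish near the origin, because NLS solutions are not compactly supported.

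Hardy's inequality does not rescue this. It only gives $Vf\in\dot H^{-1}$ for $f\in\dot H^1$, which is $3/2$ derivatives below the $\dot H^{1/2,r}$ level the stability lemmas require. Consequently the ``Hardy-type bound $\|Vf\|_{H^{1/2,r}}\lesssim\|f\|_{H^1}$'' you invoke is false. Your density step also cannot close. It needs $V\chi_{\mathrm{near}}(v-\psi)(\cdot-x_n)$ to be controlled uniformly in $n$ by $\|v-\psi\|_{L_t^\infty H^1\cap L_t^2H^{1,6}}$. However, the map $g\mapsto V\chi_{\mathrm{near}}g$ does not send smooth $g$ with $g(0)\neq0$ into $\dot N^{1/2}$ at all. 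For $1<\mu<9/5$, a fractional Leibniz argument using the $L_t^2W^{1,6}$ bound and the compactness of $J$ might rescue your route. It cannot cover $9/5\le\mu\le2$, which includes the inverse-square case.

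\textbf{The missing idea is to build the spatial localization into the approximate solution itself, not only into the error analysis.} The paper takes $\widetilde v_n(t,x)=[\chi_nw_n](t,x-x_n)$ with $\chi_n=0$ on $\{|x+x_n|<|x_n|/4\}$. In the original coordinates the approximate solution then vanishes on the ball $\{|x|<|x_n|/4\}$. The potential therefore acts only where $|V|\lesssim|x_n|^{-\mu}$ and $|\nabla V|\lesssim|x_n|^{-\mu-1}$.

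The price is two new error terms:
\begin{itemize}
\item $(\chi_n^3-\chi_n)|w_n|^2w_n$, which is small by dominated convergence;
\item $w_n\Delta\chi_n+2\nabla\chi_n\cdot\nabla w_n$, which needs half a derivative on $\nabla w_n$, hence more than $H^1$ regularity.
\end{itemize}
This is why $w_n$ is taken to be the \eqref{NLS0} solution with frequency-truncated data $P_{|x_n|^\theta}\phi$. Persistence of regularity then gives $\|w_n\|_{L_t^\infty H^2(J)}\lesssim|x_n|^\theta$, and after interpolation the commutator term is $O(|x_n|^{-1+\theta})$. The initial data now match only up to $o(1)$ in $H^1$, which the stability lemmas tolerate.

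Your remark about first replacing $\phi$ by a smooth approximant would supply the needed regularity. Without the cutoff, though, the singular part of $V$ stays in the error. Once stability is correctly applied, the rest of your plan is fine: the far-region bound, the choice of $\psi_\varepsilon$, the interpolation for the $X$ spaces, and the specialization to $[0,T]$ or $[T,0]$.
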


The proof is a compact-interval adaptation of
\cite[Theorem~6.1]{KMVZ}; compare
\cite[Proposition~2.5]{MMZ} and \cite[Lemma~2.13]{GWY}.

\begin{proof}
We first consider the inverse-square case $\mu=2$.
Fix $0<\theta\ll1$, and let $w_n$ be the
solution to \eqref{NLS0} with
\[
 w_n(0)=P_{|x_n|^\theta}\phi,
\]
where $P_{|x_n|^\theta}$ denotes the Littlewood--Paley projection to
frequencies less than $|x_n|^\theta$. As
$P_{|x_n|^\theta}\phi\to\phi$ strongly in $H^1$, the stability theory
for \eqref{NLS0} shows that $w_n$ is defined on $J$ for all
sufficiently large $n$ and
\[
 \|w_n-v\|_{L_t^\infty H_x^1(J)}
 +\|w_n-v\|_{L_t^2H_x^{1,6}(J)}
 +\|w_n-v\|_{L_{t,x}^5(J\times\R^3)}\longrightarrow0.
\]
Persistence of regularity also gives
\[
 \|w_n\|_{L_t^\infty H_x^2(J)}
 \lesssim_{J,\phi}|x_n|^\theta.
\]

Let $\chi_n$ be a smooth function satisfying
\[
 \chi_n(x)=
 \begin{cases}
 0,&|x+x_n|<\frac14|x_n|,\\
 1,&|x+x_n|>\frac12|x_n|,
 \end{cases}
 \qquad
 |\partial^\alpha\chi_n(x)|\lesssim_\alpha|x_n|^{-|\alpha|},
\]
and define on $J$
\[
 \widetilde v_n(t,x)=[\chi_nw_n](t,x-x_n).
\]
By construction and the dominated convergence theorem,
\[
 \|\widetilde v_n(0)-\phi(\cdot-x_n)\|_{H^1}
 \longrightarrow0,
\]
and
\[
 \limsup_{n\to\infty}
 \left\{
 \|\widetilde v_n\|_{L_t^\infty H_x^1(J)}
 +\|\widetilde v_n\|_{L_{t,x}^5(J\times\R^3)}
 \right\}<\infty.
\]

We claim in addition that
\[
 \bigl\|(i\partial_t-H)\widetilde v_n
   +|\widetilde v_n|^2\widetilde v_n
 \bigr\|_{\dot N^{1/2}(J)}\longrightarrow0.
\]
As in the proof of \cite[Theorem~6.1]{KMVZ}, the error consists of three
types of terms. In the translated coordinates these are
\[
 (\chi_n^3-\chi_n)|w_n|^2w_n,
 \qquad
 w_n\Delta\chi_n+2\nabla\chi_n\cdot\nabla w_n,
 \qquad
 -V(\cdot+x_n)\chi_nw_n.
\]
For the first term, Sobolev embedding and the preceding convergence give
\[
 \|\nabla[(\chi_n^3-\chi_n)|w_n|^2w_n]\|_{L_t^{5/3}L_x^{30/23}(J)}
 \lesssim_{J,\phi}1.
\]
On the other hand,
\begin{align*}
 &\|(\chi_n^3-\chi_n)|w_n|^2w_n\|_{L_t^{5/3}L_x^{30/23}(J)}\\
 &\qquad\lesssim
 \|w_n\|_{L_{t,x}^5(J\times\R^3)}^2
 \|(\chi_n^3-\chi_n)w_n\|_{L_t^5L_x^{30/11}(J)}
 \longrightarrow0.
\end{align*}
Indeed, $w_n\to v$ in $L_t^5L_x^{30/11}(J)$ by interpolation, while
\[
 \|(\chi_n^3-\chi_n)v\|_{L_t^5L_x^{30/11}(J)}\longrightarrow0
\]
by dominated convergence.  Interpolation therefore yields
\[
 \bigl\||\nabla|^{1/2}
 [ (\chi_n^3-\chi_n)|w_n|^2w_n]\bigr\|_{L_t^{5/3}L_x^{30/23}(J)}
 \longrightarrow0.
\]

For the second term, the symbol bounds for $\chi_n$ and the preceding
$H^2$ bound give
\begin{align*}
 \|w_n\Delta\chi_n
   +2\nabla\chi_n\cdot\nabla w_n\|_{L_t^1L_x^2(J)}
 &\lesssim_{J,\phi}(|x_n|^{-2}+|x_n|^{-1}),\\
 \|\nabla(w_n\Delta\chi_n
   +2\nabla\chi_n\cdot\nabla w_n)\|_{L_t^1L_x^2(J)}
 &\lesssim_{J,\phi}
 (|x_n|^{-3}+|x_n|^{-2}+|x_n|^{-1+\theta}).
\end{align*}
Hence
\[
 \bigl\||\nabla|^{1/2}
 [w_n\Delta\chi_n+2\nabla\chi_n\cdot\nabla w_n]
 \bigr\|_{L_t^1L_x^2(J)}\longrightarrow0.
\]
Finally,
\[
 \|V(\,\cdot+x_n)\chi_n\|_{L_x^\infty}
 \lesssim |x_n|^{-\mu},
 \qquad
 \|\nabla[V(\,\cdot+x_n)\chi_n]\|_{L_x^\infty}
 \lesssim |x_n|^{-\mu-1}.
\]
It follows that
\begin{align*}
 \|V(\,\cdot+x_n)\chi_nw_n\|_{L_t^1L_x^2(J)}
 &\lesssim_{J,\phi}|x_n|^{-\mu},\\
 \|\nabla[V(\,\cdot+x_n)\chi_nw_n]\|_{L_t^1L_x^2(J)}
 &\lesssim_{J,\phi}(|x_n|^{-\mu}+|x_n|^{-\mu-1}).
\end{align*}
Interpolating these estimates, we obtain
\[
 \bigl\||\nabla|^{1/2}
 [V(\,\cdot+x_n)\chi_nw_n]\bigr\|_{L_t^1L_x^2(J)}
 \to 0.
\]
Combining the preceding estimates proves the claimed error estimate.

With these estimates in place, Lemma~\ref{stability} yields a
solution $v_n$ to \eqref{NLSV} on $J$ with
$v_n(0)=\phi(\cdot-x_n)$ such that
\[
 \|v_n-\widetilde v_n\|_{\dot S^{1/2}(J)}\to 0,
 \qquad \|v_n\|_{S^1(J)}\lesssim1.
\]

For $1<\mu<2$, the same construction applies with Lemma~\ref{stability-inverse-power}. We first verify the uniform $S^0(J)$ bound needed to apply
Lemma~\ref{stability-inverse-power}. Since $J$ is compact, Sobolev embedding and the uniform $L_t^\infty H_x^1(J)$ bound give
\[
\sup_n\|\widetilde v_n\|_{S^0(J)}\lesssim_J\sup_n\|\widetilde v_n\|_{L_t^\infty H_x^1(J)}<\infty.
\]
We set
\[
e_{n,1}=(\chi_n^3-\chi_n)|w_n|^2w_n,\quad
e_{n,2}=w_n\Delta\chi_n+2\nabla\chi_n\cdot\nabla w_n,\quad
e_{n,3}=-V(\cdot+x_n)\chi_nw_n.
\]
Then
\[
\begin{aligned}
&\|e_{n,1}\|_{L_t^{5/3}L_x^{30/23}(J)}
 +\|e_{n,2}\|_{L_t^1L_x^2(J)}
 +\|e_{n,3}\|_{L_t^1L_x^2(J)}
 \to 0,\\
&\bigl\||\nabla|^{1/2}e_{n,1}\bigr\|_
  {L_t^{5/3}L_x^{30/23}(J)}
 +\bigl\||\nabla|^{1/2}e_{n,2}\bigr\|_
  {L_t^1L_x^2(J)}
 +\bigl\||\nabla|^{1/2}e_{n,3}\bigr\|_
  {L_t^1L_x^2(J)}
 \to 0.
\end{aligned}
\]
Consequently,
\[
 \|e_{n,1}\|_{L_t^{5/3}H_x^{1/2,30/23}(J)}
 +\|e_{n,2}\|_{L_t^1H_x^{1/2}(J)}
 +\|e_{n,3}\|_{L_t^1H_x^{1/2}(J)}
 \to 0.
\]
Hence
\[
 \bigl\|(i\partial_t-H)\widetilde v_n
       +|\widetilde v_n|^2\widetilde v_n
 \bigr\|_{N^{1/2}(J)}
 \to 0.
\]
Moreover, the convergence of the initial data gives $\|\widetilde v_n(0)-\phi(\cdot-x_n)\|_{H^{1/2}}\to0$. Lemma~\ref{stability-inverse-power} then yields
\[
\|v_n-\widetilde v_n\|_{S^{1/2}(J)}\to0,\qquad\|v_n\|_{S^1(J)}\lesssim1.
\]

In any case, by interpolation, Sobolev embedding, and
Lemma~\ref{sobolev-equivalence}, we obtain
\[
 \|v_n-\widetilde v_n\|_{L_{t,x}^5(J\times\R^3)}
 +\bigl\||\nabla|^{1/2}(v_n-\widetilde v_n)
   \bigr\|_{L_t^5L_x^{30/11}(J\times\R^3)}\to 0.
\]
We next verify the remaining approximations in the statement.  Set
$z_n=v_n-\widetilde v_n$.  The sequence $z_n$ is
uniformly bounded in $L_t^\infty H_x^1(J)$ and tends to zero in
$L_t^\infty\dot H_x^{1/2}(J)$.  Since $J$ is compact,
\begin{align*}
 \|z_n\|_{L_{t,x}^{10/3}(J\times\R^3)}
 &\lesssim_J
 \|z_n\|_{L_t^\infty L_x^3}^{4/5}
 \|z_n\|_{L_t^\infty L_x^6}^{1/5}\longrightarrow0,\\
 \|z_n\|_{L_t^{30/7}L_x^{90/31}(J\times\R^3)}
 &\lesssim_J
 \|z_n\|_{L_t^\infty L_x^2}^{1/15}
 \|z_n\|_{L_t^\infty L_x^3}^{14/15}\longrightarrow0.
\end{align*}
The convergence of $w_n$ to $v$, the bounds for $\chi_n$, and the
$S^1(J)$ bound for $v_n$ give
\[
 \limsup_{n\to\infty}
 \|z_n\|_{L_t^{30/7}\dot H_x^{1,90/31}(J\times\R^3)}<\infty.
\]
On the other hand, the convergence of $v_n-\widetilde v_n$ in
$\dot S^{1/2}(J)$, the preceding zero-order estimate, and
Lemma~\ref{sobolev-equivalence} give
\[
 \|z_n\|_{L_t^{30/7}\dot H_x^{1/2,90/31}(J\times\R^3)}
 \longrightarrow0.
\]
Consequently,
\begin{align*}
 &\|z_n\|_{L_t^{30/7}\dot H_x^{31/60,90/31}(J\times\R^3)}\\
 &\qquad\lesssim
 \|z_n\|_{L_t^{30/7}\dot H_x^{1/2,90/31}(J\times\R^3)}^{29/30}
 \|z_n\|_{L_t^{30/7}\dot H_x^{1,90/31}(J\times\R^3)}^{1/30}
 \longrightarrow0.
\end{align*}
The convergence of $w_n$ to $v$, dominated convergence at order zero,
the product rule at order one, and interpolation give the corresponding
convergences with $z_n$ replaced by
$\widetilde v_n-v(\,\cdot,\cdot-x_n)$, as well as
\eqref{local-embedding-pointwise}.  It follows that
\[
 \|v_n-v(\,\cdot,\cdot-x_n)\|_{X(J\times\R^3)}\longrightarrow0
\]
for every space $X$ appearing in the statement.  The conclusion now
follows by choosing a single function in
$C_c^\infty(\R\times\R^3)$ that approximates $v$ simultaneously in
these finitely many spaces.
\end{proof}

\begin{proposition}\label{P:compact}
Suppose that \(u\) is a forward-global solution to \eqref{NLSV}
satisfying \eqref{normalized-threshold} and
\eqref{bounded-solution}.  There exists \(\eta_c>0\) with the following
property.  If \(0<\eta\leq\eta_c\) and \(s_n\to\infty\) satisfy
\[
 \delta(u(s_n))=\eta,
\]
then, after passing to a subsequence, \(u(s_n)\) converges strongly in
\(H^1(\R^3)\).
\end{proposition}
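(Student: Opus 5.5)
We apply Proposition~\ref{linear-profile} to the bounded sequence $f_n=u(s_n)$ and obtain profiles $\phi^j$ with parameters $(t_n^j,x_n^j)$ and remainders $r_n^J$. Since $M(u(s_n))=M(Q)$ and $E_V(u(s_n))=E_0(Q)$, the decouplings \eqref{profile-mass}--\eqref{profile-quartic} give
\[
 \sum_j M(\phi_n^j)+\lim M(r_n^J)\le M(Q),\qquad
 \sum_j E_V(\phi_n^j)+E_V(r_n^J)=E_0(Q)+o_n(1).
\]
Moreover, every piece has nonnegative energy along the sequence, because $\delta(u(s_n))=\eta$ is small and \eqref{GN-normalized} applies to each piece (it has mass at most $M(Q)$ and kinetic energy at most $\|Q\|^2_{\dot H^1}+\eta$). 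We first use the variational lemmas to show that at most one profile, say $j=1$, lies on or above the mass--kinetic threshold $\|\phi_n^1\|_{L^2}\|\phi_n^1\|_{\dot H^1_V}\ge\|Q\|_{L^2}\|Q\|_{\dot H^1}$ asymptotically. Indeed, two such profiles would carry total kinetic energy at least about $2\|Q\|_{\dot H^1}^2\cdot(M(Q)/\text{masses})$, which exceeds $\|Q\|_{\dot H^1}^2+\eta$ when $\eta\le\eta_c$ is small. All other profiles (and any profile, if none is exceptional) have $M E<M(Q)E_0(Q)$ strictly and lie below the mass--kinetic threshold. Therefore Theorem~\ref{subthreshold-scattering} applies when $x_n^j\equiv0$, Lemma~\ref{nonlinear-embedding-I} applies when $|x_n^j|\to\infty$, and the final-state construction in Proposition~\ref{local-theory} applies when $t_n^j\to\pm\infty$. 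Each of these yields global scattering nonlinear profiles with uniform $L^5_{t,x}$ bounds.

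\emph{Some profile must be exceptional.} If every nonlinear profile scattered, the standard nonlinear profile decomposition would build an approximate solution with finite $L^5$ norm on $[0,\infty)$. Orthogonality \eqref{profile-orthogonality} would decouple the cross terms, and \eqref{linear-remainder-small} would control the remainder. The stability Lemma~\ref{stability} (for $\mu=2$) or Lemma~\ref{stability-inverse-power} (for $1<\mu<2$) would then give $\|u\|_{L^5_{t,x}((s_n,\infty))}<\infty$, so $u$ would scatter forward. This is impossible. Scattering would force $\|u(t)\|_{L^4}\to0$, hence $\|u(t)\|_{\dot H_V^1}^2\to2E_0(Q)<\|Q\|_{\dot H^1}^2$ by conservation of energy, contradicting $\delta(u(t))>0$ from Lemma~\ref{threshold-sign}. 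So there is exactly one exceptional profile $\phi^1$.

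\emph{Normalizing the exceptional profile and showing compactness.} We now show that $t_n^1\equiv0$. If instead $|t_n^1|\to\infty$, Lemma~\ref{linear-L4-decay} gives $\|\phi_n^1\|_{L^4}\to0$. Then $E_V(\phi_n^1)\to\tfrac12\|\phi^1\|_{\dot H^1}^2$, and by the mass--kinetic lower bound together with mass decoupling this is at least $\tfrac12\|Q\|_{\dot H^1}^2=3E_0(Q)>E_0(Q)$. Since all other pieces have nonnegative energy, this contradicts the energy decoupling. Next we rule out dichotomy, namely the case where other profiles or the remainder carry nontrivial mass or energy. In that case $\phi^1$ satisfies $M(\phi^1)E(\phi^1)<M(Q)E_0(Q)$ with respect to $E_V$, or with respect to $E_0$ when $|x_n^1|\to\infty$, while still lying above the mass--kinetic threshold. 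By Corollary~\ref{subthreshold-supercritical}, or by its free analogue, the profile is then supercritical and sub-threshold. To get a contradiction we run the profile decomposition on a fixed compact interval $[0,T]$ inside its lifespan: Lemma~\ref{nonlinear-embedding-II} when $|x_n^1|\to\infty$, local theory otherwise. We choose $T$ so that $\delta$ of the nonlinear profile is pushed away from $\eta$: for the sub-threshold profile the gap \eqref{below-threshold-gap} forces $\delta\ge c$, while the remaining pieces contribute only nonnegative amounts. Stability on $[0,T]$ then transfers this to $u(s_n+t)$.

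I expect this step to be the main obstacle. Turning ``the profile is strictly below threshold'' into an actual contradiction with $u$ is delicate. The cleanest route I would try first is a direct energy count at time $s_n$: if the remainder or the other profiles carry positive mass $m>0$, then by \eqref{GN-normalized} the kinetic budget for $\phi^1$ forces $\|\phi^1\|_{\dot H^1}^2\ge\|Q\|_{\dot H^1}^2M(Q)/(M(Q)-m)$, while the total is at most $\|Q\|_{\dot H^1}^2+\eta$. This is impossible for $\eta<\eta_c(m)$. Since $m$ depends on the sequence, however, I would fall back on the local-in-time stability argument described above if this count fails to close. Once dichotomy is excluded, we have $J^*=1$, the remainder tends to zero in $H^1$, and $M(\phi^1)=M(Q)$, $E(\phi^1)=E_0(Q)$.

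\emph{Excluding spatial escape.} Finally, suppose $|x_n^1|\to\infty$. Then $u(s_n)$ is close to $\phi^1(\cdot-x_n)$, and the potential term vanishes in the limit, so $\phi^1$ is a free threshold datum with $\|\phi^1\|_{\dot H^1}^2=\|Q\|_{\dot H^1}^2+\eta$ and $K_0(\phi^1)<0$. Let $v$ be its solution to \eqref{NLS0}. Theorem~\ref{free-blowup-growup} says $v$ is not a bounded global solution, unless $v=Q^+$ up to symmetries; in the $Q^+$ case we use the backward blow-up and otherwise run the same argument in forward time. We pick a compact interval $J$ in the lifespan of $v$ on which $\|v\|_{H^1}$ exceeds $2\sup_t\|u(t)\|_{H^1}$, or on which $\delta$ escapes the region allowed by the bounds on $u$. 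Lemma~\ref{nonlinear-embedding-II} together with stability then shows that $u(s_n+\cdot)$ tracks $v(\cdot-x_n)$ on $J$ in $\dot H^{1/2}$, with $H^1$ convergence coming from the $S^1$ bounds and energy identities. This contradicts \eqref{bounded-solution}. Hence $x_n^1\equiv0$, $t_n^1\equiv0$, $r_n^1\to0$ in $H^1$, and $u(s_n)\to\phi^1$ strongly in $H^1$.
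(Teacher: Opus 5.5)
Your overall architecture matches the paper's. You decompose $u(s_n)$ into profiles, get nonnegative energy for every piece from \eqref{GN-normalized}, and show at most one profile sits on or above the mass--kinetic threshold. Non-scattering of $u$ forces exactly one such profile, Lemma~\ref{linear-L4-decay} forces $t_n^1\equiv0$, and Theorem~\ref{free-blowup-growup} with Lemma~\ref{nonlinear-embedding-II} excludes $|x_n^1|\to\infty$. The gap is in the dichotomy step, which is the heart of the proposition: neither mechanism you propose closes it.

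The energy count fails for the reason you note. $\eta_c$ is fixed before the sequence, while the mass or energy carried by the other pieces can be arbitrarily small relative to $\eta$. The gap mechanism fails for the same reason and one more:
\begin{itemize}
\item The constant $c$ in \eqref{below-threshold-gap}, applied to $v^1$ after rescaling to mass $M(Q)$, is governed by the energy deficit of $\phi^1$. That deficit is bounded below only by the possibly tiny energy of the other pieces.
\item The bound is uniform in time. It is either already violated at $t=0$ (only if $c\|Q\|_{\dot H^1}^2>\eta$, which is not guaranteed) or it is never violated, so moving to a later time $T$ gains nothing.
\item Nothing constrains $\delta(u(s_n+T))$ except the uniform bound \eqref{bounded-solution}. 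Transferring $\delta\gtrsim c$ to $u(s_n+T)$ therefore contradicts nothing.
\end{itemize}

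What you need is the \emph{conclusion} of Corollary~\ref{subthreshold-supercritical} (or of its free analogue when $|x_n^1|\to\infty$), not merely its hypothesis $K_V(\phi^1)<0$, which comes from Lemma~\ref{variational-dichotomy}. The conclusion is that $v^1$ blows up or grows up. So there is a time $T$ in its lifespan, possibly negative, with $\|v^1(T)\|_{\dot H_V^1}$ (resp.\ $\|v^1(T)\|_{\dot H^1}$) exceeding $\sup_{t\geq0}\|u(t)\|_{\dot H_V^1}+1$. On the interval $I$ between $0$ and $T$, build the approximate solution from three pieces: $v^1$ or its local embedding from Lemma~\ref{nonlinear-embedding-II}, global scattering profiles for $j\geq2$, and $e^{-itH}r_n^J$.

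Stability only controls the error in $\dot S^{1/2}(I)$, so the transfer to $u$ must go through a weak limit. By parameter orthogonality, the profiles $j\geq2$ and $e^{-iTH}r_n^J$ are weakly null in the frame of $x_n^1$ at time $T$. Hence $u(s_n+T,\cdot+x_n^1)\rightharpoonup v^1(T)$ in $H^1$; note that $s_n+T\geq0$ for large $n$. Weak lower semicontinuity then gives $\|v^1(T)\|\leq\sup_{t\geq0}\|u(t)\|_{\dot H_V^1}$, a contradiction.

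This is exactly the mechanism you already use to exclude spatial escape. It is needed in the dichotomy step too, comparing the norm of $v^1(T)$ against the uniform bound on $u$ rather than $\delta$ against $\eta$.
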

\begin{proof}
We first consider the inverse-square case $\mu=2$.
Set $u_n=u(s_n)$.
By conservation of mass and energy, \eqref{quartic-delta}, and
\eqref{pohozaev},
\begin{equation}\label{fixed-delta-identities}
 \begin{gathered}
 M(u_n)=M(Q),\qquad E_V(u_n)=E_0(Q),\\
 \|u_n\|_{\dot H_V^1}^2=\|Q\|_{\dot H^1}^2+\eta,\qquad
 \|u_n\|_{L_x^4}^4=\frac43\|Q\|_{\dot H^1}^2+2\eta.
 \end{gathered}
\end{equation}
The solution \(u\) does not scatter forward.  Indeed,
Lemma~\ref{threshold-sign} gives \(K_V(u(t))<0\) throughout the
lifespan, whereas a scattering solution has vanishing \(L^4\)-norm and
hence $K_V(u(t))>0$ for all sufficiently large times.

We now apply the linear profile decomposition
(Proposition~\ref{linear-profile}) to the bounded sequence $u_n$ to
obtain
\[
 u_n=\sum_{j=1}^J\phi_n^j+r_n^J,\qquad
 \phi_n^j=e^{it_n^jH}[\phi^j(\cdot-x_n^j)]
\]
along a subsequence, with all of the properties stated in that
proposition. We single out three possibilities, namely: vanishing
($J^*=0$), compactness ($J^*=1$), or dichotomy ($J^*\geq2$).

If $J^*=0$, then we obtain
\[
 \|e^{-itH}u_n\|_{L_{t,x}^5([0,\infty)\times\R^3)}
 \longrightarrow0\qquad\text{as }n\to\infty.
\]
Using this together with stability, we readily deduce
\[
 \|u(s_n+t)\|_{L_{t,x}^5([0,\infty)\times\R^3)}
 =\|u\|_{L_{t,x}^5([s_n,\infty)\times\R^3)}\lesssim1
\]
for all sufficiently large $n$, which contradicts the failure of
forward scattering. Thus vanishing cannot occur.

By a diagonal argument, we may assume that the mass, energy, and
$\dot H_V^1$ norm of every profile and remainder have limits as
$n\to\infty$.  The sharp Gagliardo--Nirenberg inequality may be
written as
\begin{equation}\label{profile-GN}
 \|f\|_{L_x^4}^4
 \leq\frac43
 \left[
 \frac{M(f)\|f\|_{\dot H_V^1}^2}
      {M(Q)\|Q\|_{\dot H^1}^2}
 \right]^{1/2}
 \|f\|_{\dot H_V^1}^2.
\end{equation}
Consequently, the Pythagorean expansions and
\eqref{fixed-delta-identities} imply that, for $\eta_c$ sufficiently
small,
\begin{align}\label{profile-energy-coercivity}
 E_V(\phi_n^j)
 &\geq
 \left(\frac12-\frac13
 \sqrt{1+\frac{\eta}{\|Q\|_{\dot H^1}^2}}\right)
 \|\phi_n^j\|_{\dot H_V^1}^2+o_n(1),\notag\\
 E_V(r_n^J)
 &\geq
 \left(\frac12-\frac13
 \sqrt{1+\frac{\eta}{\|Q\|_{\dot H^1}^2}}\right)
 \|r_n^J\|_{\dot H_V^1}^2+o_n(1)
\end{align}
for every fixed $j$ and $J$.  In particular, all limiting energies
are nonnegative, and the limiting energy of every nonzero profile is
positive.

We next show that one of the profiles lies on or above the
mass--kinetic threshold.  Suppose instead that, for every $j$,
\begin{equation}\label{all-profiles-subthreshold}
 \lim_{n\to\infty}
 M(\phi_n^j)\|\phi_n^j\|_{\dot H_V^1}^2
 <M(Q)\|Q\|_{\dot H^1}^2.
\end{equation}
If $J^*=1$ and $r_n^1\to0$ strongly in $H^1$, then the mass and
kinetic-energy decouplings give
\[
 \lim_{n\to\infty}
 M(\phi_n^1)\|\phi_n^1\|_{\dot H_V^1}^2
 =M(Q)\bigl(\|Q\|_{\dot H^1}^2+\eta\bigr),
\]
contrary to \eqref{all-profiles-subthreshold}.  Thus either
$J^*\geq2$, or $J^*=1$ and $r_n^1$ does not converge strongly to zero
in $H^1$.

The mass and energy decouplings, together with
\eqref{profile-energy-coercivity}, now imply that every profile lies
strictly below the mass--energy threshold:
\begin{equation}\label{profiles-mass-energy-subthreshold}
 \lim_{n\to\infty}M(\phi_n^j)E_V(\phi_n^j)
 <M(Q)E_0(Q),\qquad 1\leq j\leq J^*.
\end{equation}
Indeed, if there is a second profile, then it carries strictly positive
mass and energy.  If $J^*=1$ and the remainder does not converge to
zero in $H^1$, then
\[
 \lim_{n\to\infty}\left[M(r_n^1)+\|r_n^1\|_{\dot H_V^1}^2\right]>0.
\]
If $\lim_nM(r_n^1)>0$, mass decoupling gives
$\lim_nM(\phi_n^1)<M(Q)$. Otherwise,
\eqref{profile-energy-coercivity} gives $\lim_nE_V(r_n^1)>0$, and hence
$\lim_nE_V(\phi_n^1)<E_0(Q)$. Since both limiting quantities for the
first profile are positive and bounded above by $M(Q)$ and $E_0(Q)$,
respectively, their product is strictly smaller than $M(Q)E_0(Q)$.  This proves \eqref{profiles-mass-energy-subthreshold}.

We may therefore construct global nonlinear profiles exactly as in
\cite[Proposition~3.1]{MMZ}.  First suppose that $x_n^j\equiv0$.  If
$t_n^j\equiv0$, let $v^j$ be the solution to \eqref{NLSV} with
$v^j(0)=\phi^j$.  If $t_n^j\to+\infty$, let $v^j$ be the solution
that scatters to $e^{-itH}\phi^j$ as $t\to-\infty$; if
$t_n^j\to-\infty$, then let $v^j$ be the solution
that scatters to $e^{-itH}\phi^j$ as $t\to+\infty$. 
When $t_n^j\equiv0$, the subthreshold scattering theorem applies
directly by \eqref{all-profiles-subthreshold} and
\eqref{profiles-mass-energy-subthreshold}. When $|t_n^j|\to\infty$,
the scattering condition and Lemma~\ref{linear-L4-decay} give
\[
 M(v^j)=M(\phi^j),\qquad
 E_V(v^j)=\frac12\|\phi^j\|_{\dot H_V^1}^2
 =\lim_{n\to\infty}E_V(\phi_n^j).
\]
Moreover,
$\|v^j(-t_n^j)-e^{it_n^jH}\phi^j\|_{H^1}\to0$,
so \eqref{all-profiles-subthreshold} gives
\[
 \|v^j(-t_n^j)\|_{L_x^2}\|v^j(-t_n^j)\|_{\dot H_V^1}
 <\|Q\|_{L_x^2}\|Q\|_{\dot H^1}
\]
for sufficiently large $n$. Applying the subthreshold scattering
theorem at one such time shows that $v^j$ is global and scatters in
both directions.

In each case set
\[
 v_n^j(t,x)=v^j(t-t_n^j,x).
\]
If $|x_n^j|\to\infty$, convergence of the translated operators gives
\[
 E_V(\phi_n^j)\longrightarrow E_0(\phi^j),\qquad
 \|\phi_n^j\|_{\dot H_V^1}^2
 \longrightarrow\|\phi^j\|_{\dot H^1}^2
\]
when $t_n^j\equiv0$.  If $|t_n^j|\to\infty$, then
Lemma~\ref{linear-L4-decay} instead gives
\[
 E_V(\phi_n^j)\longrightarrow
 \frac12\|\phi^j\|_{\dot H^1}^2.
\]
Thus \eqref{all-profiles-subthreshold} and
\eqref{profiles-mass-energy-subthreshold} give the hypotheses of
Lemma~\ref{nonlinear-embedding-I}; let $v_n^j$ be the global solution
furnished by that lemma.  For every fixed $j$,
\begin{equation}\label{scattering-profile-initial-match}
 \|v_n^j(0)-\phi_n^j\|_{H^1}\longrightarrow0.
\end{equation}

Define
\[
 u_n^J(t,x)=\sum_{j=1}^Jv_n^j(t,x)+e^{-itH}r_n^J.
\]
Then
\begin{equation}\label{scattering-initial-approximation}
 \lim_{n\to\infty}\|u_n^J(0)-u_n\|_{H^1}=0
 \qquad\text{for every fixed }J.
\end{equation}
We claim that
\begin{align}
 &\limsup_{J\to J^*}\limsup_{n\to\infty}
 \left\{\|u_n^J(0)\|_{H^1}
 +\|u_n^J\|_{L_{t,x}^5(\R\times\R^3)}\right\}
 \lesssim1,\label{scattering-profile-bounds}\\
 &\limsup_{J\to J^*}\limsup_{n\to\infty}
 \bigl\|(i\partial_t-H)u_n^J+|u_n^J|^2u_n^J
 \bigr\|_{\dot N^{1/2}(\R)}=0.
 \label{scattering-equation-error}
\end{align}

Before turning to the proof of \eqref{scattering-profile-bounds} and
\eqref{scattering-equation-error}, we observe that by the orthogonality
of the parameters and approximation by functions in
$C_c^\infty(\R\times\R^3)$ (see Lemma~\ref{nonlinear-embedding-I} for
the case $|x_n^j|\to\infty$), we may obtain the following: for any
$j\ne k$,
\begin{equation}\label{scattering-profile-orthogonality}
 \|v_n^jv_n^k\|_{L_{t,x}^{5/2}}
 +\|v_n^j|\nabla|^{1/2}v_n^k
 \|_{L_t^{5/2}L_x^{30/17}}
 \longrightarrow0.
\end{equation}
Here and below, the norms in this part of the proof are taken over
$\R\times\R^3$.
For the treatment of the linear remainder, the same compact
approximation argument also gives
\begin{equation}\label{scattering-auxiliary-orthogonality}
 \begin{aligned}
 &\|v_n^jv_n^k\|_{L_{t,x}^{5/3}}
 +\|v_n^jv_n^k\|_{L_t^{15/7}L_x^{45/31}}\\
 &\quad+
 \|(|\nabla|^{31/60}v_n^j)(|\nabla|^{31/60}v_n^k)
   \|_{L_t^{15/7}L_x^{45/31}}
 \longrightarrow0
 \end{aligned}
 \qquad(j\ne k).
\end{equation}

\begin{proof}[Proof of \eqref{scattering-profile-bounds}]
The $H^1$ bound follows from
\eqref{scattering-initial-approximation}.  For the uniform $S^1$
bound, we follow the proof of \cite[(7.14)]{KMVZ}.  By the $H^1$
decoupling,
\[
 \limsup_{n\to\infty}\sum_{j=1}^J
 \|\phi_n^j\|_{H^1}^2\lesssim1
\]
uniformly in $J$.  The definition of the profiles and the convergence
of the translated operators therefore give
\[
 \sum_{j=1}^{J^*}\|\phi^j\|_{H^1}^2<\infty.
\]
If $J^*<\infty$, the square-sum bound below follows immediately.
Suppose therefore that $J^*=\infty$, and choose $J_0$ sufficiently
large that the last sum restricted to $j\geq J_0$ lies below the
small-data threshold.  For these profiles, the small-data estimate
$\|v_n^j\|_{S^1(\R)}\lesssim\|\phi^j\|_{H^1}$ holds uniformly for
large $n$.  Hence
\[
 \sup_{J\geq J_0}\limsup_{n\to\infty}
 \sum_{j=J_0}^J\|v_n^j\|_{S^1(\R)}^2
 \lesssim\sum_{j=J_0}^{J^*}\|\phi^j\|_{H^1}^2.
\]
For each $j<J_0$, the corresponding nonlinear profile has a finite
$S^1(\R)$ norm.  There are only finitely many such profiles, and hence
\[
 \limsup_{J\to J^*}\limsup_{n\to\infty}
 \sum_{j=1}^J\|v_n^j\|_{S^1(\R)}^2\lesssim1.
\]
For every fixed $J$, \eqref{scattering-profile-orthogonality} also
gives
\begin{align*}
 &\left|
 \left\|\sum_{j=1}^Jv_n^j\right\|_{L_{t,x}^5}^5
 -\sum_{j=1}^J\|v_n^j\|_{L_{t,x}^5}^5\right|\\
 &\qquad\lesssim_J
 \sum_{j\ne k}\|v_n^j\|_{L_{t,x}^5}^3
 \|v_n^jv_n^k\|_{L_{t,x}^{5/2}}\longrightarrow0.
\end{align*}
Together with \eqref{scattering-profile-orthogonality}, Strichartz,
and \eqref{linear-remainder-small}, this proves
\eqref{scattering-profile-bounds}.
\end{proof}

A similar argument gives
\begin{equation}\label{scattering-L10/3-bound}
 \limsup_{J\to J^*}\limsup_{n\to\infty}
 \|u_n^J\|_{L_{t,x}^{10/3}}\lesssim1.
\end{equation}
Next, arguing as above, for $s\in\{0,31/60\}$ we have
\begin{align*}
 \left\|\sum_{j=1}^Jv_n^j
 \right\|_{L_t^{30/7}\dot H_x^{s,90/31}}^2
 &\lesssim
 \sum_{j=1}^J
 \|v_n^j\|_{L_t^{30/7}\dot H_x^{s,90/31}}^2\\
 &\quad+C_J\sum_{j\ne k}
 \|(|\nabla|^sv_n^j)(|\nabla|^sv_n^k)
 \|_{L_t^{15/7}L_x^{45/31}}.
\end{align*}
Using this, \eqref{scattering-auxiliary-orthogonality}, the square-sum
bound above, Strichartz, and Lemma~\ref{sobolev-equivalence}, we deduce
\begin{equation}\label{scattering-high-bound}
 \limsup_{J\to J^*}\limsup_{n\to\infty}
 \|u_n^J\|_{L_t^{30/7}H_x^{31/60,90/31}}\lesssim1.
\end{equation}

\begin{proof}[Proof of \eqref{scattering-equation-error}]
Write $F(z)=-|z|^2z$.  We first estimate
\[
 \left\||\nabla|^{1/2}
 \left[\sum_{j=1}^JF(v_n^j)
 -F\left(\sum_{j=1}^Jv_n^j\right)\right]\right\|_
 {L_t^{5/3}L_x^{30/23}}.
\]
The expression inside the brackets is a finite linear combination of
terms $v_n^jv_n^kv_n^\ell$, up to complex conjugates, where not all
three indices are equal.  Supposing $j\ne k$, the fractional product
rule gives
\begin{align*}
 &\bigl\||\nabla|^{1/2}(v_n^jv_n^kv_n^\ell)
   \bigr\|_{L_t^{5/3}L_x^{30/23}}\\
 &\quad\lesssim
 \|v_n^jv_n^k\|_{L_{t,x}^{5/2}}
 \bigl\||\nabla|^{1/2}v_n^\ell\bigr\|_{L_t^5L_x^{30/11}}\\
 &\qquad+
 \bigl\||\nabla|^{1/2}(v_n^jv_n^k)
 \bigr\|_{L_t^{5/2}L_x^{30/17}}
 \|v_n^\ell\|_{L_{t,x}^5}.
\end{align*}
The first term is $o_n(1)$ by
\eqref{scattering-profile-orthogonality}.  For the second term, the
paraproduct estimate in \cite[(3.6)]{KM-cubic} yields
\begin{align*}
 \bigl\||\nabla|^{1/2}(v_n^jv_n^k)
 \bigr\|_{L_t^{5/2}L_x^{30/17}}
 &\lesssim
 \|v_n^j|\nabla|^{1/2}v_n^k
 \|_{L_t^{5/2}L_x^{30/17}}\\
 &\quad+
 \|v_n^k|\nabla|^{1/2}v_n^j
 \|_{L_t^{5/2}L_x^{30/17}}\\
 &\quad+
 \|A(v_n^j)B(|\nabla|^{1/2}v_n^k)
 \|_{L_t^{5/2}L_x^{30/17}},
\end{align*}
where $A$ and $B$ are bounded sublinear operators that commute with
translations. The first two terms are $o_n(1)$ by
\eqref{scattering-profile-orthogonality}, while the final term is
$o_n(1)$ by the same argument used to prove
\eqref{scattering-profile-orthogonality}.

Next, we estimate the contribution of the linear remainder.  Using
\eqref{scattering-high-bound}, the fractional product rule, and Sobolev
embedding, we have
\begin{align*}
 &\limsup_{n\to\infty}
 \bigl\||\nabla|^{31/60}F(u_n^J)\bigr\|_{L_{t,x}^{10/7}}\\
 &\quad\lesssim
 \limsup_{n\to\infty}
 \|u_n^J\|_{L_t^{30/7}L_x^{45/8}}^2
 \bigl\||\nabla|^{31/60}u_n^J
 \bigr\|_{L_t^{30/7}L_x^{90/31}}
 \lesssim1
\end{align*}
uniformly in $J$.
Since $e^{-itH}r_n^J$ is uniformly bounded in
$S^1(\R)$, the same estimate gives
\begin{equation}\label{scattering-remainder-high}
 \limsup_{J\to J^*}\limsup_{n\to\infty}
 \left\||\nabla|^{31/60}
 [F(u_n^J-e^{-itH}r_n^J)-F(u_n^J)]\right\|_{L_{t,x}^{10/7}}
 \lesssim1.
\end{equation}
On the other hand, Strichartz,
\eqref{scattering-profile-bounds},
\eqref{scattering-L10/3-bound}, and
\eqref{linear-remainder-small} yield
\begin{align*}
 &\limsup_{J\to J^*}\limsup_{n\to\infty}
 \|F(u_n^J-e^{-itH}r_n^J)-F(u_n^J)\|_{L_{t,x}^{10/7}}\\
 &\quad\lesssim
 \limsup_{J\to J^*}\limsup_{n\to\infty}
 \|e^{-itH}r_n^J\|_{L_{t,x}^5}
 \bigl(\|u_n^J\|_{L_{t,x}^5}
       +\|r_n^J\|_{\dot H^{1/2}}\bigr)
 \bigl(\|u_n^J\|_{L_{t,x}^{10/3}}
       +\|r_n^J\|_{L_x^2}\bigr)=0.
\end{align*}
Interpolating this estimate with
\eqref{scattering-remainder-high}, we have
\begin{align*}
 &\left\||\nabla|^{1/2}
 [F(u_n^J-e^{-itH}r_n^J)-F(u_n^J)]\right\|_{L_{t,x}^{10/7}}\\
 &\quad\lesssim
 \|F(u_n^J-e^{-itH}r_n^J)-F(u_n^J)\|_{L_{t,x}^{10/7}}^{1/31}\\
 &\qquad\quad\times
 \left\||\nabla|^{31/60}
 [F(u_n^J-e^{-itH}r_n^J)-F(u_n^J)]
 \right\|_{L_{t,x}^{10/7}}^{30/31}.
\end{align*}
Consequently,
\[
 \limsup_{J\to J^*}\limsup_{n\to\infty}
 \left\||\nabla|^{1/2}
 [F(u_n^J-e^{-itH}r_n^J)-F(u_n^J)]\right\|_{L_{t,x}^{10/7}}
 =0.
\]
This proves
\eqref{scattering-equation-error}.
\end{proof}

Applying Lemma~\ref{stability} on $[0,\infty)$, using
\eqref{scattering-initial-approximation}--\eqref{scattering-equation-error},
we obtain
\[
 \|u(s_n+t)\|_{L_{t,x}^5([0,\infty)\times\R^3)}\lesssim1
\]
for all sufficiently large $n$.  This contradicts the failure of
forward scattering and proves that, after relabeling,
\begin{equation}\label{large-profile-inequality}
 \lim_{n\to\infty}
 M(\phi_n^1)\|\phi_n^1\|_{\dot H_V^1}^2
 \geq M(Q)\|Q\|_{\dot H^1}^2.
\end{equation}
There is no second profile satisfying
\eqref{large-profile-inequality}, since
\[
 \|Q\|_{\dot H^1}^2+\eta<2\|Q\|_{\dot H^1}^2
\]
for $\eta_c$ sufficiently small.

If $|t_n^1|\to\infty$, then Lemma~\ref{linear-L4-decay} gives
\[
 \|\phi_n^1\|_{L_x^4}\longrightarrow0.
\]
Energy decoupling, \eqref{profile-energy-coercivity}, and
\eqref{pohozaev} then imply
\[
 \limsup_{n\to\infty}\|\phi_n^1\|_{\dot H_V^1}^2
 \leq2E_0(Q)=\frac13\|Q\|_{\dot H^1}^2,
\]
contradicting \eqref{large-profile-inequality}.  Thus
$t_n^1\equiv0$.

Next, we rule out dichotomy (i.e. $J^*\geq2$). Suppose that
$J^*\geq2$. Since $\phi^2$ is nonzero, the decoupling of mass and
energy and \eqref{profile-energy-coercivity} give
\[
 \begin{cases}
 M(\phi^1)E_V(\phi^1)<M(Q)E_0(Q),&x_n^1\equiv0,\\
 M(\phi^1)E_0(\phi^1)<M(Q)E_0(Q),&|x_n^1|\to\infty.
 \end{cases}
\]
Together with \eqref{large-profile-inequality},
Lemma~\ref{variational-dichotomy} shows that \(K_V(\phi^1)<0\) in the first case and
\(K_0(\phi^1)<0\) in the second.  Let \(v^1\) be the corresponding
solution: it solves \eqref{NLSV} in the first case and \eqref{NLS0} in
the second.  Corollary~\ref{subthreshold-supercritical}, or the
blow-up/grow-up result for \eqref{NLS0} \cite{HR10}, yields a time $T$
in the lifespan of $v^1$, possibly negative, such that
\begin{equation}\label{main-profile-large}
 \begin{cases}
 \|v^1(T)\|_{\dot H_V^1}>
 \displaystyle\sup_{t\geq0}\|u(t)\|_{\dot H_V^1}+1,
 &x_n^1\equiv0,\\
 \|v^1(T)\|_{\dot H^1}>
 \displaystyle\sup_{t\geq0}\|u(t)\|_{\dot H_V^1}+1,
 &|x_n^1|\to\infty.
 \end{cases}
\end{equation}
Let $I=[0,T]$ if $T>0$ and $I=[T,0]$ if $T<0$. Since
$s_n\to\infty$, the solution $u(s_n+\cdot)$ is defined on $I$ for all
sufficiently large $n$.
To reach a contradiction under the dichotomy assumption, we will use
the linear profiles $\phi_n^j$ to build approximate solutions to
\eqref{NLSV} on $I$. Since $M(\phi_n^1)\leq M(Q)$,
\eqref{large-profile-inequality} and the Pythagorean expansions imply
that, for every fixed $J$,
\begin{equation}\label{secondary-profile-smallness}
 \limsup_{n\to\infty}
 \left\{\sum_{j=2}^J\|\phi_n^j\|_{\dot H_V^1}^2
             +\|r_n^J\|_{\dot H_V^1}^2\right\}\leq\eta.
\end{equation}

For the first profile, set $v_n^1=v^1$ if $x_n^1\equiv0$. If
$|x_n^1|\to\infty$, let $v_n^1$ be the solution given by
Lemma~\ref{nonlinear-embedding-II} on $I$.

We next construct the remaining profiles $j\geq2$ as in the proof of
\cite[Proposition~3.1]{MMZ}. First consider the case
$x_n^j\equiv0$. If $t_n^j\equiv0$, then the subthreshold scattering
theorem yields a solution $v^j$ to \eqref{NLSV} satisfying
$v^j(0)=\phi^j$ and obeying global space-time bounds. If instead
$t_n^j\to+\infty$, let $v^j$ be the solution that scatters to
$e^{-itH}\phi^j$ as $t\to-\infty$; if $t_n^j\to-\infty$, then let $v^j$ be the solution that scatters to
$e^{-itH}\phi^j$ as $t\to+\infty$. In either case, we define
\[
 v_n^j(t,x)=v^j(t-t_n^j,x).
\]
To verify the hypotheses, use mass decoupling,
$E_V(f)\leq\frac12\|f\|_{\dot H_V^1}^2$, and
\eqref{secondary-profile-smallness}. After decreasing $\eta_c$,
\[
 \begin{aligned}
 \limsup_{n\to\infty}M(\phi_n^j)E_V(\phi_n^j)
 &\leq\frac12M(Q)\eta<M(Q)E_0(Q),\\
 \limsup_{n\to\infty}M(\phi_n^j)\|\phi_n^j\|_{\dot H_V^1}^2
 &\leq M(Q)\eta<M(Q)\|Q\|_{\dot H^1}^2.
 \end{aligned}
\]
When $|t_n^j|\to\infty$, Lemma~\ref{linear-L4-decay} gives
\[
 E_V(\phi_n^j)\longrightarrow
 \frac12\|\phi^j\|_{\dot H_V^1}^2.
\]
The scattering condition gives
$M(v^j)=M(\phi^j)$,
$E_V(v^j)=\frac12\|\phi^j\|_{\dot H_V^1}^2$, and
$\|v^j(-t_n^j)-e^{it_n^jH}\phi^j\|_{H^1}\to0$.
Thus $v^j(-t_n^j)$ satisfies both subthreshold inequalities for large
$n$, and Theorem~\ref{subthreshold-scattering} applies at that time.

Next, if $|x_n^j|\to\infty$, we appeal to
Lemma~\ref{nonlinear-embedding-I} to obtain a global solution $v_n^j$
to \eqref{NLSV}. The hypotheses of that lemma
follow from the preceding two inequalities when $t_n^j\equiv0$; when
$|t_n^j|\to\infty$, mass decoupling and
\eqref{secondary-profile-smallness} give, after decreasing $\eta_c$,
\[
 \frac12\|\phi^j\|_{L_x^2}^2\|\phi^j\|_{\dot H^1}^2
 \leq\frac12M(Q)\eta<M(Q)E_0(Q).
\]

By construction, for every fixed $j$,
\begin{equation}\label{nonlinear-profile-initial-match}
 \|v_n^j(0)-\phi_n^j\|_{H^1}\longrightarrow0.
\end{equation}
All profiles with $j\geq2$ are global and uniformly bounded in
$S^1(\R)$, while the first profile is uniformly bounded in $S^1(I)$.

We now define approximate solutions to \eqref{NLSV} by
\[
 u_n^J(t,x)=\sum_{j=1}^Jv_n^j(t,x)+e^{-itH}r_n^J
\]
and immediately observe that by construction,
\begin{equation}\label{nonlinear-initial-approximation}
 \lim_{n\to\infty}\|u_n^J(0)-u_n\|_{H^1}=0
 \qquad\text{for each }J.
\end{equation}
Our next goal is to show
\begin{align}
 &\limsup_{J\to J^*}\limsup_{n\to\infty}
 \left\{\|u_n^J(0)\|_{H^1}
 +\|u_n^J\|_{L_{t,x}^5(I\times\R^3)}\right\}
 \lesssim1,\label{nonlinear-profile-bounds}\\
 &\limsup_{J\to J^*}\limsup_{n\to\infty}
 \bigl\|(i\partial_t-H)u_n^J+|u_n^J|^2u_n^J
 \bigr\|_{\dot N^{1/2}(I)}=0.
 \label{nonlinear-equation-error}
\end{align}

The proofs of \eqref{scattering-profile-bounds} and
\eqref{scattering-equation-error} apply with $\R$ replaced by $I$.
We first verify the corresponding bounds on $I$. For $j\geq2$, the
nonlinear profiles are constructed as before. If $|x_n^1|\to\infty$,
Lemma~\ref{nonlinear-embedding-II} gives a uniform $S^1(I)$ bound
and approximation by smooth compactly supported functions in all
the spaces used in
\eqref{scattering-profile-orthogonality}--\eqref{scattering-auxiliary-orthogonality}.
If $x_n^1\equiv0$, these properties follow from the local theory
for $v^1$. Parameter orthogonality therefore gives the same
orthogonality estimates on $I$.

The small-data argument for sufficiently large $j$ is unchanged.
The remaining profiles are finite in number and have uniformly
bounded $S^1(I)$ norms. Consequently,
\[
 \limsup_{J\to J^*}\limsup_{n\to\infty}
 \sum_{j=1}^J\|v_n^j\|_{S^1(I)}^2\lesssim1.
\]
It follows, exactly as in
\eqref{scattering-profile-bounds}--\eqref{scattering-high-bound}, that
\[
 \begin{aligned}
 &\limsup_{J\to J^*}\limsup_{n\to\infty}
 \Bigl\{
 \|u_n^J\|_{L_{t,x}^5(I\times\R^3)}
 +\|u_n^J\|_{L_{t,x}^{10/3}(I\times\R^3)}\\
 &\qquad\qquad
 +\|u_n^J\|_{L_t^{30/7}H_x^{31/60,90/31}(I\times\R^3)}
 \Bigr\}\lesssim1.
 \end{aligned}
\]
Together with \eqref{nonlinear-initial-approximation}, this proves
\eqref{nonlinear-profile-bounds}. The estimates for the cubic
interactions and the linear remainder, including interpolation
between orders $0$ and $31/60$, now give
\eqref{nonlinear-equation-error} by the proof of
\eqref{scattering-equation-error}. In both arguments $J$ is fixed
before $n\to\infty$, and then $J\to J^*$.

Lemma~\ref{stability}, together with
\eqref{nonlinear-initial-approximation},
\eqref{nonlinear-profile-bounds}, and
\eqref{nonlinear-equation-error}, gives
\begin{equation}\label{nonlinear-stability}
 \lim_{J\to J^*}\limsup_{n\to\infty}
 \|u(s_n+\cdot)-u_n^J\|_{\dot S^{1/2}(I)}=0.
\end{equation}
We now pass to the frame of the first profile.  If
$|x_n^1|\to\infty$, \eqref{local-embedding-pointwise} gives
\[
 v_n^1(T,\cdot+x_n^1)\longrightarrow v^1(T)
 \qquad\text{in }\dot H^{1/2},
\]
while this sequence is bounded in $H^1$. Thus the convergence is weak
in $H^1$ and, by Sobolev embedding and interpolation, strong in $L^4$.
Conservation of mass also gives strong convergence in $L^2$. Moreover,
\[
 E_V(v_n^1(T))
 =E_V(\phi^1(\cdot-x_n^1))
 \longrightarrow E_0(\phi^1)=E_0(v^1(T)).
\]
Consequently,
\[
 \|\nabla v_n^1(T)\|_{L_x^2}^2
 +\int_{\R^3}V(x)|v_n^1(T,x)|^2\,dx
 \longrightarrow \|\nabla v^1(T)\|_{L_x^2}^2.
\]
Since $V\geq0$, weak lower semicontinuity yields
\[
 \int_{\R^3}V(x)|v_n^1(T,x)|^2\,dx\longrightarrow0
\]
and
\[
 v_n^1(T,\cdot+x_n^1)\longrightarrow v^1(T)
 \qquad\text{strongly in }H^1.
\]
The same conclusion is immediate when $x_n^1\equiv0$. For $j\geq2$,
parameter orthogonality, scattering,
and the construction and stability argument in the proof of
Lemma~\ref{nonlinear-embedding-I} give
\[
 v_n^j(T,\cdot+x_n^1)\rightharpoonup0
 \qquad\text{weakly in }H^1.
\]
Writing $r_n^J=r_n^1-\sum_{j=2}^J\phi_n^j$, the weak convergence in
\eqref{linear-remainder-weak} with $J=1$, parameter orthogonality, and
the convergence of the translated propagators show that
$e^{-iTH}r_n^J$ converges weakly to zero in the frame of the first
profile; see \cite[Lemma~2.11]{KMVZ}. 

We now identify the weak
limit of the exact solutions.  After passing to a subsequence, the sequence
$u(s_n+T)$, translated by $x_n^1$ when $|x_n^1|\to\infty$, has a weak
$H^1$ limit.  For each fixed $J$, the preceding convergences give
$u_n^J(T,\cdot+x_n^1)\rightharpoonup v^1(T)$ in $H^1$.
Since $V\geq0$, one has
$\||\nabla|^{1/2}f\|_2\leq\|H^{1/4}f\|_2$.
Thus \eqref{nonlinear-stability} controls the error at $T$ in
the $\dot H^{1/2}$ norm. Testing against a smooth compactly supported function,
which belongs to $\dot H^{-1/2}$, and passing first to $n\to\infty$
and then to $J\to J^*$ identifies the distributional limit with
$v^1(T)$. The uniform $H^1$ bound then gives the same weak $H^1$ limit.

Consequently,
\[
 \begin{cases}
 u(s_n+T)\rightharpoonup v^1(T),&x_n^1\equiv0,\\
 u(s_n+T,\cdot+x_n^1)\rightharpoonup v^1(T),&|x_n^1|\to\infty,
 \end{cases}
 \qquad\text{weakly in }H^1.
\]
If $x_n^1\equiv0$, weak lower semicontinuity gives
\[
 \|v^1(T)\|_{\dot H_V^1}
 \leq\sup_{t\geq0}\|u(t)\|_{\dot H_V^1}.
\]
If $|x_n^1|\to\infty$, it gives
\[
 \|v^1(T)\|_{\dot H^1}
 \leq\sup_{t\geq0}\|u(t)\|_{\dot H_V^1}.
\]
This contradicts \eqref{main-profile-large} and hence rules out the
possibility of dichotomy.

Having ruled out vanishing and dichotomy, we are left with the
conclusion that $J^*=1$ (`compactness'). In particular, since
$t_n^1\equiv0$, our decomposition reduces to
\[
 u_n=\phi_n^1+r_n^1=\phi^1(\cdot-x_n^1)+r_n^1.
\]
We claim that $r_n^1\to0$ strongly in $H^1$. Indeed, if strong
convergence fails, then mass and energy decoupling and
\eqref{profile-energy-coercivity} show that the first profile lies
strictly below the mass--energy threshold. Let $v^1$ denote the
solution to \eqref{NLSV} with initial data $\phi^1$ when
$x_n^1\equiv0$, and the solution to \eqref{NLS0} with initial data
$\phi^1$ when $|x_n^1|\to\infty$. As above, there is a time $T$ in
the lifespan of $v^1$, possibly negative, such that
\[
 \begin{cases}
 \|v^1(T)\|_{\dot H_V^1}>
 \displaystyle\sup_{t\geq0}\|u(t)\|_{\dot H_V^1}+1,
 &x_n^1\equiv0,\\
 \|v^1(T)\|_{\dot H^1}>
 \displaystyle\sup_{t\geq0}\|u(t)\|_{\dot H_V^1}+1,
 &|x_n^1|\to\infty.
 \end{cases}
\]
Let $I=[0,T]$ if $T>0$ and $I=[T,0]$ if $T<0$.  Set
$v_n^1=v^1$ when $x_n^1\equiv0$, and when $|x_n^1|\to\infty$ let
$v_n^1$ be the solution furnished by
Lemma~\ref{nonlinear-embedding-II} on $I$. Since $s_n\to\infty$,
$u(s_n+\cdot)$ is defined on $I$ for all sufficiently large $n$. The
preceding stability argument on $I$, now with only $v_n^1$ and
$e^{-itH}r_n^1$, gives
\[
 \begin{cases}
 u(s_n+T)\rightharpoonup v^1(T),&x_n^1\equiv0,\\
 u(s_n+T,\cdot+x_n^1)\rightharpoonup v^1(T),
 &|x_n^1|\to\infty,
 \end{cases}
 \qquad\text{weakly in }H^1.
\]
Since $s_n+T\geq0$ for all sufficiently large $n$, weak lower
semicontinuity contradicts the preceding choice of $T$. Thus
\begin{equation}\label{one-profile-convergence}
 \|u_n-\phi^1(\cdot-x_n^1)\|_{H^1}\longrightarrow0.
\end{equation}
Equivalently, no further profile occurs and $r_n^1\to0$ strongly in
$H^1$; hence the decomposition contains only one profile.

It remains to rule out $|x_n^1|\to\infty$. Suppose that
\(|x_n^1|\to\infty\). Then \eqref{fixed-delta-identities} and
\eqref{one-profile-convergence} imply
\[
 M(\phi^1)=M(Q),\qquad E_0(\phi^1)=E_0(Q),\qquad
 \|\phi^1\|_{\dot H^1}^2=\|Q\|_{\dot H^1}^2+\eta.
\]
Let \(v^1\) be the solution to \eqref{NLS0} with \(v^1(0)=\phi^1\).
By Theorem~\ref{free-blowup-growup}, there exists a time $T$ in the
lifespan of $v^1$, which may be
positive or negative, such that
\[
 \|v^1(T)\|_{\dot H^1}>
 \sup_{t\geq0}\|u(t)\|_{\dot H_V^1}+1.
\]
If $T>0$, apply Lemma~\ref{nonlinear-embedding-II} on $[0,T]$; if
$T<0$, apply it on $[T,0]$.  Denote the resulting solution by $v_n^1$.
Since $s_n\to\infty$, one has $s_n+T\geq0$ for all sufficiently large
$n$.
By \eqref{local-embedding-pointwise}, stability, and
\eqref{one-profile-convergence},
\begin{align}\label{Chc}
	 u(s_n+T,\cdot+x_n^1)\longrightarrow v^1(T)
 \qquad\text{in }\dot H^{1/2}(\R^3).
\end{align}
In view of \eqref{Chc} and the fact that the sequence is bounded in $H^1$, it converges weakly to $v^1(T)$
in $H^1$. By Sobolev embedding, the convergence in $\dot H^{1/2}$
implies strong convergence in $L^3$, while the $H^1$ bound gives a
uniform $L^6$ bound. Interpolation therefore yields
\[
u(s_n+T,\cdot+x_n^1)\longrightarrow v^1(T)
\qquad\text{strongly in }L^4.
\]
Moreover,
$M(u(s_n+T))=M(Q)=M(v^1(T))$, so weak $L^2$ convergence and equality
of the norms give strong $L^2$ convergence.
Conservation of energy gives
\[
 \|\nabla u(s_n+T)\|_{L_x^2}^2
 +\int_{\R^3}V(x)|u(s_n+T,x)|^2\,dx
 \longrightarrow\|\nabla v^1(T)\|_{L_x^2}^2.
\]
Since $V\geq0$, weak lower semicontinuity yields
\[
 \int_{\R^3}V(x)|u(s_n+T,x)|^2\,dx\longrightarrow0
\]
and
\[
 u(s_n+T,\cdot+x_n^1)\longrightarrow v^1(T)
 \qquad\text{strongly in }H^1(\R^3),
\]
and hence
$\|v^1(T)\|_{\dot H^1}
\leq\sup_{t\geq0}\|u(t)\|_{\dot H_V^1}$, which is a contradiction.
Hence
\(x_n^1\equiv0\), and \eqref{one-profile-convergence} proves the
proposition for $\mu=2$.

For $1<\mu<2$, we use Lemma~\ref{stability-inverse-power} in place of Lemma~\ref{stability}. Let $\mathcal I$ denote the interval where stability is applied; it remains to upgrade convergence in $\dot N^{1/2}(\mathcal I)$ to $N^{1/2}(\mathcal I)$. Write
\[
\mathcal P_n^J:=\sum_{j=1}^JF(v_n^j)-F\!\left(\sum_{j=1}^Jv_n^j\right),\qquad
\mathcal R_n^J:=F(u_n^J-e^{-itH}r_n^J)-F(u_n^J).
\]
Every mixed term in $\mathcal P_n^J$ has the form $v_n^jv_n^kv_n^\ell$, up to conjugates, with at least two distinct indices. If $j\ne k$,
\[
\|v_n^jv_n^kv_n^\ell\|_{L_t^{5/3}L_x^{30/23}(\mathcal I)}
\lesssim\|v_n^jv_n^k\|_{L_{t,x}^{5/2}(\mathcal I)}\|v_n^\ell\|_{L_t^5L_x^{30/11}(\mathcal I)}\to0.
\]
Thus parameter orthogonality and the argument giving convergence in $\dot N^{1/2}(\mathcal I)$ yield
\[
\limsup_{J\to J^*}\limsup_{n\to\infty}\|\mathcal P_n^J\|_{L_t^{5/3}L_x^{30/23}(\mathcal I)}=0.
\]
Similarly, Hölder's inequality, the uniform bounds for $u_n^J$, and the smallness of $e^{-itH}r_n^J$ give
\[
\limsup_{J\to J^*}\limsup_{n\to\infty}\|\mathcal R_n^J\|_{L_{t,x}^{10/7}(\mathcal I)}=0.
\]
Together with convergence in $\dot N^{1/2}(\mathcal I)$, these imply
\[
\limsup_{J\to J^*}\limsup_{n\to\infty}\|(i\partial_t-H)u_n^J+|u_n^J|^2u_n^J\|_{N^{1/2}(\mathcal I)}=0.
\]
The $H^{1/2}$ convergence at the initial time follows from the established $H^1$ convergence. Hence Lemma~\ref{stability-inverse-power} applies in each preceding stability argument, and the rest of the proof is unchanged. 
\end{proof}

\section{A rigidity result}\label{S:rigidity}

\begin{theorem}\label{rigidity}
There exists $\eta_*>0$ such that there is no forward-global solution to
\eqref{NLSV} satisfying \eqref{normalized-threshold} and
\begin{equation}\label{rigidity-hypothesis}
 0<\delta(u(t))\leq\eta_*,\qquad t\geq0.
\end{equation}
\end{theorem}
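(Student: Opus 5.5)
We argue by contradiction. Suppose $u$ is forward global, satisfies \eqref{normalized-threshold}, and obeys \eqref{rigidity-hypothesis} with $\eta_*\le\eta_0$. Then Proposition~\ref{modulation} applies for all $t\ge0$, so the modulation parameters $\theta,\alpha,y$ are $C^1$ on $[0,\infty)$ and satisfy \eqref{mo10}--\eqref{mo20} together with \eqref{center-lower-bound}. Since $u$ is $H^1$-bounded, Corollary~\ref{sequencec} gives $t_n\to\infty$ with $\delta(t_n)\to0$. By \eqref{center-lower-bound} this forces $|y(t_n)|\to\infty$. The goal is to show that $|y(t)-y(0)|$ stays bounded by a constant, say $C\eta_*^{-\gamma}$ or simply $R_0$, for all $t\ge0$, which gives a contradiction.

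\textbf{Step 1: modulated virial.} Fix $R\gg1$ and consider $I_R(t)=\int w_R(x-y(0))|u(t,x)|^2dx$, using the truncated weight $w_R$ from Section~\ref{sec:viride} centered at $y(0)$.

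Because of \eqref{virial-first}, the $H^1$ bound, and $|w_R|\lesssim R^2$, we have $|I_R'(t)|\lesssim R$. In the first-derivative expression, the bulk part centered at $y(t)$ is controlled as follows. Since $\Im\int Q\nabla Q\cdot\nabla w=0$, one gets $|I_R'(t)|\lesssim R\,\delta(t)$ as long as $|y(t)-y(0)|\le R/2$, because the tail of $Q$ beyond distance $R/2$ is exponentially small. More precisely, the bound is $|I_R'|\lesssim (R\delta+Re^{-cR})$, and the exponential term is absorbed by choosing $R\gtrsim\log(1/\delta)$. Here the lower bound $\delta\gtrsim(1+|y|)^{-\mu/2}$ from \eqref{center-lower-bound} is useful: it ensures $\delta$ is not too small relative to $|y|$.

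For the second derivative, Lemma~\ref{L:virial} gives
\[
I_R''\le 8K_V(u)+C\int_{|x-y(0)|>R}(R^{-2}|u|^2+|u|^4)\,dx .
\]
Since $8K_V\le-4\delta$ by \eqref{energy-virial-general}, and the exterior region carries only $O(\delta^2+e^{-cR})$ of the mass and $L^4$ norm when $|y(t)-y(0)|\le R/2$ (by \eqref{decompz0} and \eqref{mo10}), we obtain $I_R''(t)\le-2\delta(t)$ on such time intervals.

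\textbf{Step 2: bootstrap.} On any interval $[0,T]$ where $|y(t)-y(0)|\le R/2$, integrating $I_R''\le-2\delta$ gives
\[
\int_0^T\delta(t)\,dt\lesssim |I_R'(0)|+|I_R'(T)|\lesssim R\,(\delta(0)+\delta(T)).
\]
Combining this with \eqref{mo20} yields
\[
|y(T)-y(0)|\le\int_0^T|y'|\lesssim\int_0^T\delta\lesssim R\eta_*.
\]
This is less than $R/4$ once $\eta_*$ is small. By continuity, the constraint $|y(t)-y(0)|\le R/2$ therefore persists for all $t\ge0$, with $R$ fixed (large, depending on $\delta(0)$ through the exponential-tail absorption).

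The main obstacle is the mismatch of scales. The exponential error terms of size $e^{-cR}$ must be dominated by $\delta(t)$ uniformly in time, while $\delta(t)$ may become arbitrarily small along $t_n$. I would handle this with \eqref{center-lower-bound}: on the bootstrap interval, $|y(t)|\le|y(0)|+R$, so $\delta(t)\gtrsim(1+|y(0)|+R)^{-\mu/2}$, which is only polynomially small in $R$. Choosing $R$ large makes $e^{-cR}\ll(|y(0)|+R)^{-\mu/2}$, which closes the argument.

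\textbf{Step 3: contradiction.} The same lower bound, $\delta(t)\gtrsim(1+|y(0)|+R)^{-\mu/2}>0$ uniformly for $t\ge0$, already contradicts $\delta(t_n)\to0$ from Corollary~\ref{sequencec}. Equivalently, $|y(t_n)|\to\infty$ is incompatible with $|y(t)-y(0)|\le R/2$. This rules out such a solution and proves Theorem~\ref{rigidity}.
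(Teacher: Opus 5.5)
Your architecture (modulated virial centered at $y(0)$, bootstrap on $|y(t)-y(0)|$, contradiction via Corollary~\ref{sequencec} and \eqref{center-lower-bound}) is the paper's. However, Step~1 has a real gap: Lemma~\ref{L:virial} does not apply to the shifted weight $w_R(\cdot-y(0))$. That lemma's proof gets the sign of the potential contribution because the weight is radial about the singularity of $V$. For your weight, the potential term in \eqref{virial} is $2a\mu\int_{\R^3}\frac{x\cdot\nabla w_R(x-y(0))}{|x|^{\mu+2}}|u|^2\,dx$. This term is not dominated by the $4\mu\int V|u|^2$ in $8K_V(u)$. Moreover, if $|y(0)|\lesssim R$, then $\nabla w_R(\cdot-y(0))$ does not vanish near $0$. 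The term then involves $\int|x|^{-\mu-1}|u|^2$, which the $H^1$ norm does not control for $\mu>1$.

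The repair is the paper's \eqref{bst2}. Impose $|y(0)|\ge 6R$, so that $|x|\ge 3R$ on the support of $\nabla w_R(\cdot-y(0))$. The whole potential contribution is then $\lesssim\int V|u|^2\lesssim\delta(t)^2$ by \eqref{potential-modulation}. You must check this is compatible with your choice of $R$. It is: \eqref{center-lower-bound} gives $|y(0)|\gtrsim\eta_*^{-2/\mu}$, while your tail absorption only needs $R$ of order $\log(2+|y(0)|)$. Note that $R$ depends on $|y(0)|$, not on $\delta(0)$ as you wrote.

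With that fixed, your route is valid but differs from the paper in how the exterior terms are handled. You bound them absolutely by $O(\delta^2+e^{-cR})$ and absorb $e^{-cR}$ via $\delta(t)\gtrsim(1+|y(0)|+R)^{-\mu/2}$ on the bootstrap interval. This forces a solution-dependent $R$ and a bootstrap at scale $R/2$. The bootstrap closes because $|y(t)-y(0)|\lesssim R\eta_*$ with a constant independent of $R$.

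The paper instead subtracts $\mathcal G$ of the modulated ground state, which vanishes identically because $e^{it}Q$ is a standing wave with $K_0(Q)=0$. All errors are then $O((\varepsilon+\delta)\delta)$, $R$ is universal, and the bootstrap is at scale $1$. Your device is more elementary but relies essentially on the repulsive potential through \eqref{center-lower-bound}. The paper's is the standard modulated-virial estimate and needs no lower bound on $\delta$ to control tails.

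Finally, the tail you attach to $I_R'$ is spurious. Since $Q$ is real, $\Im\int\overline{u}\,\nabla u\cdot\nabla w_R(\cdot-y(0))$ has no $Q$--$Q$ term, so $|I_R'|\lesssim R\delta$ holds with no condition on $|y(t)-y(0)|$.
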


In the remainder of this section, suppose that $u$ satisfies the
assumptions of Theorem~\ref{rigidity}, with $\eta_*<\eta_0$. By
Proposition~\ref{modulation}, there exist $C^1$ functions
\[
 \theta:[0,\infty)\to\R,\qquad
 y:[0,\infty)\to\R^3,\qquad
 \alpha:[0,\infty)\to\R
\]
such that \eqref{decompz0} and
\eqref{mo10}--\eqref{center-lower-bound} hold. In particular,
\[
 \|u(t)-e^{i(t+\theta(t))}Q(\cdot-y(t))\|_{H^1}
 \lesssim\delta(t),\qquad t\geq0.
\]

\begin{proposition}\label{bootstrap}
If $|y(0)|$ is sufficiently large and $\eta_*$ is sufficiently small,
then
\[
 |y(t)-y(0)|\leq1,\qquad t\geq0.
\]
\end{proposition}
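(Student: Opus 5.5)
We argue by a continuity (bootstrap) argument combined with a modulated localized virial estimate. Let $T^*=\sup\{T\ge0: |y(t)-y(0)|\le 2 \text{ on }[0,T]\}$; by continuity of $y$, $T^*>0$. The aim is to show that on $[0,T^*)$ one in fact has $|y(t)-y(0)|\le1$, so that $T^*=\infty$. By \eqref{mo20},
\[
 |y(t)-y(0)|\lesssim\int_0^t\delta(s)\,ds ,
\]
so it suffices to bound $\int_0^{T}\delta(s)\,ds$ by a small constant for $T<T^*$. Note that on $[0,T^*)$ the center stays within distance $2$ of $y(0)$. Hence \eqref{center-lower-bound} gives $\delta(t)\gtrsim(1+|y(0)|)^{-\mu/2}$, while \eqref{potential-modulation} gives $\int V|u|^2\lesssim\delta^2$. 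The largeness of $|y(0)|$ will enter only through how small the potential makes everything near the soliton.

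\textbf{Virial step.} Let $w_R(x)=R^2\phi(|x-y(0)|/R)$ be the truncated weight of Section~\ref{sec:viride}, recentered at $y(0)$, and set $I(t)=I(t;w_R)$. The plan is to show, for $t<T^*$,
\begin{equation*}
 I''(t)\le -c\,\delta(t),\qquad |I'(t)|\lesssim R\,\delta(t).
\end{equation*}
For the second derivative, compute as in Lemma~\ref{L:virial}. The main term is $8K_V(u)\le-4\delta$ by \eqref{energy-virial-general}. Two error terms must then be controlled.

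\emph{Exterior error.} This is supported in $|x-y(0)|>R$. Since $u=e^{i(\cdot)}[(1+\alpha)Q+h](\cdot-y(t))$ with $|y(t)-y(0)|\le2$ and $\|h\|_{H^1}\lesssim\delta$, it is bounded by $\delta^2+e^{-cR}$.

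\emph{Potential error.} The term $2a\mu\int(w_R'/r-2)\ldots$ no longer has a sign, because the weight is not centered at the origin. It is bounded by $R\int|\nabla V||u|^2$. Near the soliton, $|\nabla V|\lesssim|y(0)|^{-\mu-1}$; away from it, we use the exponential decay of $Q$ plus $h$. The result is a bound $\lesssim R|y(0)|^{-1}\delta^2+R\,\delta^2$. Actually the cleaner route is to use \eqref{potential-modulation} and Hardy, which give $\lesssim R\,\delta^{2}$ at worst.

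\emph{Absorbing the errors.} The term $e^{-cR}$ must be absorbed by $\delta$. This is where largeness of $|y(0)|$ is used: choose $R=|y(0)|^{\beta}$ with small $\beta$ so that $e^{-cR}\ll(1+|y(0)|)^{-\mu/2}\lesssim\delta$. The $R\delta^2$ terms are absorbed by requiring $R\eta_*\ll1$. Here I would refine by using the modulation ansatz: the potential term evaluated on $Q(\cdot-y)$ is $O(|y(0)|^{-\mu})\lesssim\delta^2$, which removes the factor $R$ for the dominant part.

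For the first derivative, \eqref{virial-first} and the orthogonality of the imaginary part ($Q$ real, $\langle h_2,Q\rangle=0$) give $I'(t)=O(R\,\delta(t))$. Indeed, the bulk term $\Im\int Q\nabla Q\cdot\nabla w_R=0$, leaving terms linear in $h$.

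\textbf{Integration.} Integrating $I''\le-c\delta$ over $[0,T]$ gives
\[
 c\int_0^T\delta\le I'(0)-I'(T)\lesssim R(\delta(0)+\delta(T))\lesssim R\eta_*.
\]
Hence $|y(T)-y(0)|\lesssim R\eta_*\le1/2$ once $\eta_*$ is small relative to $R^{-1}$. This closes the bootstrap.

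\textbf{Main obstacle.} The main obstacle is the compatibility of the constants: $R$ must be large, depending on $|y(0)|$, to kill the exponential tails, yet $R\eta_*$ must be small. Since $\eta_*$ cannot depend on $y(0)$, one may need to exploit that $\delta(t)$ itself is small, for instance by taking $R\sim|\log\delta|$-type scales, or by using \eqref{center-lower-bound} in reverse: large $|y(0)|$ is compatible with small $\delta(0)$. I would first try $R=C\log(1+|y(0)|)$. Then $e^{-cR}\lesssim|y(0)|^{-\mu}\lesssim\delta^2$ and $R\delta(0)$ is small provided $\delta(0)$ is comparable to $|y(0)|^{-\mu/2}$. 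Failing that, I would reduce, via a time translation along Corollary~\ref{sequencec}, to a starting time where $\delta$ is tiny.
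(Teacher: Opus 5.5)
There is a genuine gap, and it is the one you flag under ``Main obstacle'' without resolving it. You estimate $I''$ as in Lemma~\ref{L:virial}, discarding the nonpositive Hessian term. That leaves an exterior error whose soliton contribution is of size $e^{-cR}$, independent of $\delta(t)$. On the bootstrap interval, \eqref{center-lower-bound} only gives $\delta(t)\gtrsim(1+|y(0)|)^{-\mu/2}$ (that step of yours is correct), so absorbing $e^{-cR}$ into $-4\delta$ forces $R\gtrsim\log(1+|y(0)|)$. But the closing step is
\[
 |y(T)-y(0)|\lesssim\int_0^T\delta\lesssim R\bigl(\delta(0)+\delta(T)\bigr),
\]
and both $\delta(0)$ and $\delta(T)$ are only known to be at most $\eta_*$. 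The hypotheses give a lower bound $|y(0)|\gtrsim\eta_*^{-2/\mu}$ but no upper bound on $|y(0)|$ in terms of $\eta_*$ or $\delta(0)$. So $R\eta_*$ cannot be made small, and neither $R=|y(0)|^\beta$ nor $R=C\log(1+|y(0)|)$ closes the bootstrap. The time-translation fallback fails for the same reason. The endpoint term $R\delta(T)$ is still only bounded by $R\eta_*$, and there is still no upper bound on $|y|$ in terms of $\delta$ at the new starting time.

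The missing idea is to keep the exact cancellation with the soliton instead of discarding terms by sign. Let $\mathcal G(f)$ be the truncated virial expression without potential minus $8\|\nabla f\|_{L^2}^2-6\|f\|_{L^4}^4$. After centering at $y(0)$, it only sees the region $|x|>R$. The localized variance of $e^{it}Q(\cdot-z)$ is constant in time and $K_0(Q)=0$, so $\mathcal G(e^{i\gamma}Q(\cdot-z))=0$ for all $\gamma,z$. Hence
\[
 I''(t)=-4\delta(t)+(\text{potential term})+\mathcal G\bigl(u(t,\cdot+y(0))\bigr)-\mathcal G\bigl(e^{i(t+\theta(t))}Q(\cdot+y(0)-y(t))\bigr).
\]
By \eqref{mo10}, $\|u(t)-e^{i(t+\theta(t))}Q(\cdot-y(t))\|_{H^1}\lesssim\delta(t)$. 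Together with $|y(t)-y(0)|\leq2$, this bounds the $\mathcal G$-difference by $C(\varepsilon_R+\delta(t))\delta(t)$. Here $\varepsilon_R$ is the size of $Q$ in $H^1\cap L^4$ outside the ball of radius $R-2$. The localization error is therefore linear in $\delta$ with a small coefficient, so $R$ can be fixed depending only on $Q$, with no reference to $|y(0)|$.

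The largeness of $|y(0)|$ is used only for the potential term, by requiring $|y(0)|\geq 6R$. Then $|x|\geq3R$ on the support of $\nabla w_R(\cdot-y(0))$, so that term is bounded by $C\int V|u|^2\lesssim\delta^2$ via \eqref{potential-modulation}, with no factor of $R$. With $R$ fixed, you get $I''\leq-2\delta$ and $|I'|\lesssim R\delta$. Taking $\eta_*\ll R^{-1}$ then gives $|y(t)-y(0)|\lesssim R\eta_*\leq1$.
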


\begin{proof}
We prove this by a bootstrap argument. It is enough to show that on
every interval $[0,T]$ satisfying
\[
 |y(t)-y(0)|\leq2,\qquad 0\leq t\leq T,
\]
one has
\[
 |y(t)-y(0)|\leq1,\qquad 0\leq t\leq T.
\]

Let $w_R(x)=R^2\phi(|x|/R)$, with $\phi$ as in
\eqref{phi-virial}, and set
\[
 I(t)=\int_{\R^3}w_R(x-y(0))|u(t,x)|^2\,dx.
\]
For $f\in H^1(\R^3)$, write
\begin{align*}
 \mathcal G(f)
 &=4\Re\sum_{j,k=1}^3\int_{\R^3}
   \bigl((w_R)_{jk}-2\delta_{jk}\bigr)
   \overline{\partial_jf}\,\partial_kf\,dx\\
 &\quad+\int_{\R^3}(6-\Delta w_R)|f|^4\,dx
       -\int_{\R^3}\Delta^2w_R|f|^2\,dx.
\end{align*}
The truncated virial identity for the standing wave
$e^{it+i\gamma}Q(x-z)$ gives
\[
 \mathcal G(e^{i\gamma}Q(\cdot-z))=0.
\]
Applying \eqref{virial} and using
\[
 8\|u(t)\|_{\dot H^1}^2-6\|u(t)\|_{L^4}^4
 =-4\delta(t)-8\int_{\R^3}V|u(t)|^2\,dx,
\]
we obtain
\begin{align*}
 I''(t)
 &=-4\delta(t)
   +2a\int_{\R^3}
   \left[
   \mu\frac{x\cdot\nabla w_R(x-y(0))}{|x|^{\mu+2}}
   -\frac4{|x|^\mu}
   \right]|u(t,x)|^2\,dx\notag\\
 &\quad
   +\mathcal G(u(t,\cdot+y(0)))\notag\\
 &\quad
   -\mathcal G\bigl(e^{i(t+\theta(t))}
        Q(\cdot+y(0)-y(t))\bigr).
\end{align*}

We next choose $R$ sufficiently large, independently of $T$, so that
\begin{align}
 \left|\mathcal G(u(t,\cdot+y(0)))
 -\mathcal G\bigl(e^{i(t+\theta(t))}
 Q(\cdot+y(0)-y(t))\bigr)\right|
       &\leq\delta(t),\label{bst1}\\
 2a\left|\int_{\R^3}
 \left[
 \mu\frac{x\cdot\nabla w_R(x-y(0))}{|x|^{\mu+2}}
 -\frac4{|x|^\mu}
 \right]|u(t,x)|^2\,dx\right|
 &\leq\delta(t),\label{bst2}
\end{align}
for all $t\in[0,T]$.

Taking these estimates for granted, the preceding identity gives
\[
 I''(t)\leq-2\delta(t).
\]
Moreover, \eqref{virial-first} and \eqref{mo10} yield
\[
 |I'(t)|\lesssim R\delta(t).
\]
Consequently,
\[
 \int_0^T\delta(t)\,dt
 \lesssim R[\delta(0)+\delta(T)]
 \lesssim R\eta_*.
\]
It follows from \eqref{mo20} that
\[
 |y(t)-y(0)|
 \leq\int_0^t|y'(s)|\,ds
 \lesssim\int_0^T\delta(s)\,ds
 \lesssim R\eta_*,\qquad 0\leq t\leq T.
\]
Taking $\eta_*$ sufficiently small gives the desired bound.

It remains to prove \eqref{bst1} and \eqref{bst2}. Given
$\varepsilon>0$, choose $R$ sufficiently large that the $H^1$ and
$L^4$ norms of $Q$ on $\{|x|>R-2\}$ are at most $\varepsilon$. Since
$|y(t)-y(0)|\leq2$ on $[0,T]$,
\[
 \{|x-y(0)|>R\}\subset\{|x-y(t)|>R-2\}.
\]
Hence \eqref{mo10} gives
\[
 \|u(t)\|_{H^1(|x-y(0)|>R)}
 \lesssim\varepsilon+\delta(t),\qquad
 \|Q(\cdot-y(t))\|_{H^1(|x-y(0)|>R)}
 \lesssim\varepsilon,
\]
and the same bounds hold in $L^4$. The difference of the gradient
terms in \eqref{bst1} is therefore bounded by
$C(\varepsilon+\delta(t))\delta(t)$, while
\begin{align*}
 &\int_{|x-y(0)|>R}
 \bigl||u(t,x)|^4-|Q(x-y(t))|^4\bigr|\,dx\\
 &\quad\lesssim
 \left[\|u(t)\|_{L^4(|x-y(0)|>R)}^3
 +\|Q(\cdot-y(t))\|_{L^4(|x-y(0)|>R)}^3\right]\\
 &\qquad\times
 \|u(t)-e^{i(t+\theta(t))}Q(\cdot-y(t))\|_{L^4}\\
 &\quad\lesssim(\varepsilon+\delta(t))^3\delta(t).
\end{align*}
The term containing $\Delta^2w_R$ is estimated similarly. Choosing
$\varepsilon$ and then $\eta_*$ sufficiently small proves
\eqref{bst1}.

Finally, suppose that $|y(0)|\geq6R$. On the support of
$\nabla w_R(x-y(0))$ one has $|x-y(0)|\leq3R$, and hence
$|x|\geq3R$. Since $|\nabla w_R|\lesssim R$, the left-hand side of
\eqref{bst2} is bounded by
\[
 C\int_{\R^3}V(x)|u(t,x)|^2\,dx
 \lesssim\delta(t)^2
\]
by \eqref{potential-modulation}. Therefore, 
\eqref{bst2} holds for sufficiently small $\eta_{\ast}\ll1$, and the proposition follows.
\end{proof}

\begin{proof}[Proof of Theorem~\ref{rigidity}]
By \eqref{center-lower-bound}, after decreasing $\eta_*$ we have
$|y(0)|$ sufficiently large. Proposition~\ref{bootstrap} then gives
\[
 |y(t)|\leq|y(0)|+1,
 \qquad t\geq0.
\]
On the other hand, Corollary~\ref{sequencec} gives a sequence
$t_n\to\infty$ such that $\delta(u(t_n))\to0$. Applying
\eqref{center-lower-bound} once more, we obtain $|y(t_n)|\to\infty$,
which is a contradiction.
\end{proof}

\section{Proof of the main result}\label{S:reduction}

\begin{proof}[Proof of Proposition~\ref{prop1.7}]
Suppose, to the contrary, that $u$ is a forward-global solution
satisfying \eqref{normalized-threshold} and \eqref{bounded-solution}.
By Corollary~\ref{sequencec}, there exists $t_n\to\infty$ such that
\[
 \delta(u(t_n))\longrightarrow0.
\]
Let $\eta_*>0$ be as in Theorem~\ref{rigidity}, decreased if necessary
so that Proposition~\ref{P:compact} applies with $\eta=\eta_*$.

We split into two cases:
\begin{itemize}
\item[(i)]
There exists $t_0>0$ such that
\[
 \sup_{t\geq t_0}\delta(u(t))\leq\eta_*.
\]
\item[(ii)]
There exists a sequence $t_n^-\to\infty$ such that
\[
 \delta(u(t_n^-))>\eta_*
 \qquad\text{for all }n.
\]
\end{itemize}

Case (i), after a time translation, contradicts
Theorem~\ref{rigidity}. We therefore assume that case (ii) occurs.
Passing to a subsequence, we may assume that
\[
 t_n^-<t_n<t_{n+1}^-,
 \qquad \delta(u(t_n))<\eta_*.
\]
Set
\[
 t_n^+=\max\bigl\{t\in[t_n^-,t_n]:
                   \delta(u(t))=\eta_*\bigr\}.
\]
By continuity and Lemma~\ref{threshold-sign},
\[
 t_n^+\to\infty,
 \qquad \delta(u(t_n^+))=\eta_*,
 \qquad 0<\delta(u(t))<\eta_*
 \quad(t_n^+<t\leq t_n).
\]

Proposition~\ref{P:compact}, applied to $\{u(t_n^+)\}$, gives, after
passing to a subsequence, $w_0\in H^1(\R^3)$ such that
\[
 u(t_n^+)\longrightarrow w_0
 \qquad\text{in }H^1(\R^3).
\]
We now let $w$ be the maximal-lifespan solution to \eqref{NLSV}
with $w(0)=w_0$.  The preceding convergence and conservation of
mass and energy give
\[
 M(w)=M(Q),\qquad E_V(w)=E_0(Q),
 \qquad \delta(w(0))=\eta_*.
\]
In particular, Lemma~\ref{threshold-sign} shows that $w$ satisfies
\eqref{normalized-threshold} and that $\delta(w(t))>0$ throughout
its lifespan.

To see that $w$ is forward global, we define
\[
 \widetilde u_n(t,x)=u(t_n^++t,x).
\]
Fix $0<T<T_+(w)$. By the local theory, $w$ has finite
$L_{t,x}^5$ norm on $[0,T]\times\R^3$. Then we can apply Lemma~\ref{stability} when $\mu=2$, or
Lemma~\ref{stability-inverse-power} when $1<\mu<2$ to obtain that
\begin{equation}\label{cov1}
     \sup_{0\leq t\leq T}
 \|\widetilde u_n(t)-w(t)\|_{\dot H^{1/2}}
 \longrightarrow0.
\end{equation}
Together with \eqref{bounded-solution}, this implies that, for
every $t\in[0,T]$,
\begin{equation}\label{cov2}
    \widetilde u_n(t)\rightharpoonup w(t)
 \qquad\text{weakly in }H^1(\R^3).
\end{equation}
By weak lower semicontinuity,
\[
 \|w(t)\|_{H^1}
 \leq\liminf_{n\to\infty}\|\widetilde u_n(t)\|_{H^1}
 \leq\sup_{s\geq0}\|u(s)\|_{H^1}.
\]
Since $T<T_+(w)$ was arbitrary, the continuation criterion in
Proposition~\ref{local-theory} shows that $w$ is forward global
and bounded in $H^1$. Therefore, \eqref{cov1}  and \eqref{cov2} hold
on every fixed interval $[0,T]\subset[0,\infty)$.

We next claim that
\[
 \sup_{t\geq0}\delta(w(t))\leq\eta_*.
\]
Together with Theorem~\ref{rigidity}, this gives the desired
contradiction. To this end, we first show that, for any fixed
$T>0$, there exists $n_0(T)$ such that
\[
 t_n-t_n^+\geq T
\]
for all $n\geq n_0(T)$. Suppose this fails. After passing to a
subsequence, we may assume that
\[
 \tau_n:=t_n-t_n^+<T,\qquad \tau_n\longrightarrow\tau\in[0,T].
\]
By \eqref{cov2}, we conclude that
\[
 u(t_n)=\widetilde u_n(\tau_n)\rightharpoonup w(\tau)
 \qquad\text{weakly in }H^1(\R^3).
\]
Since the quadratic form $f\mapsto\|f\|_{\dot H_V^1}^2$ is
weakly lower semicontinuous on $H^1$, we obtain
\[
 0<\delta(w(\tau))
 \leq\liminf_{n\to\infty}\delta(u(t_n))=0,
\]
which is a contradiction (as $w\neq0$). Therefore,
\[
 t_n-t_n^+\longrightarrow+\infty.
\]

Now fix $T>0$. For all sufficiently large $n$, we have
$t_n^++T\in(t_n^+,t_n]$, and hence
\[
 \delta(\widetilde u_n(T))<\eta_*.
\]
On the other hand, again by \eqref{cov2},
\[
 \widetilde u_n(T)\rightharpoonup w(T)
 \qquad\text{weakly in }H^1(\R^3).
\]
By weak lower semicontinuity,
\[
 0<\delta(w(T))
 \leq\liminf_{n\to\infty}\delta(\widetilde u_n(T))
 \leq\eta_*.
\]
Since $T>0$ was arbitrary and $\delta(w(0))=\eta_*$, the claim
follows. This contradicts Theorem~\ref{rigidity} and completes
the proof.
\end{proof}

\begin{proof}[Proof of Theorem~\ref{main}]
Let $u$ be a maximal-lifespan solution satisfying
\eqref{normalized-threshold}. In the positive time direction, either
the maximal lifespan is finite or $u$ is forward global. In the latter
case, Proposition~\ref{prop1.7} shows that
\[
 \limsup_{t\to\infty}\|u(t)\|_{\dot H^1}=\infty.
\]
Thus $u$ either blows up or grows up in positive time. Applying the
same argument to the time-reversed solution $\overline{u(-t)}$ gives
the corresponding alternative in negative time. The four possible
combinations are precisely those stated in Theorem~\ref{main}.
\end{proof}

\appendix
\section{Finite-variance blow-up}\label{finite-variance-appendix}

In this appendix, we extend the finite-variance blow-up result of [1, Theorem 1.3(ii)] to the endpoint \(\mu=2\) and give a unified proof for \(1<\mu\le2\). More precisely, we prove the following theorem.
\begin{theorem}[Finite-variance blow-up]\label{finite-variance}
Let $V(x)=a|x|^{-\mu}$ with $a>0$ and $1<\mu\leq2$. Suppose that
$u_0\in H^1(\mathbb R^3)$ satisfies
\[
M(u_0)E_V(u_0)=M(Q)E_0(Q),\qquad K_V(u_0)<0.
\]
If $|x|u_0\in L^2(\mathbb R^3)$, then the solution to \eqref{NLSV} blows
up in both time directions.
\end{theorem}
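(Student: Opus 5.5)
By the scaling argument of Corollary~\ref{product-threshold}, we may assume $M(u_0)=M(Q)$ and $E_V(u_0)=E_0(Q)$. Scaling preserves finite variance, and it replaces $V$ by $V_\lambda=a\lambda^{2-\mu}|x|^{-\mu}$, which is again of the same form. By Lemma~\ref{threshold-sign} we then have $\delta(t)>0$ and $K_V(u(t))<0$ on the whole lifespan. The plan is a classical Glassey-type virial argument with the untruncated weight $w=|x|^2$. Set
\[
I(t)=\int_{\R^3}|x|^2|u(t,x)|^2\,dx,
\]
which is finite and $C^2$ on the lifespan by the standard finite-variance theory; we justify this by approximation as in \cite{AHI}. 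Since $\nabla w\cdot\nabla V=-2a\mu|x|^{-\mu}$, the identity \eqref{virial} gives exactly $I''(t)=8K_V(u(t))$. By \eqref{energy-virial-general} this yields
\[
I''(t)=-4\delta(t)-4(2-\mu)\int_{\R^3}V|u(t)|^2\,dx\leq-4\delta(t)<0.
\]
For $\mu=2$ the potential term vanishes, so no uniform negative bound follows from this identity alone. That is the new difficulty at the endpoint.

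Suppose $u$ is forward global; the negative direction follows by time reversal. Then $I$ is strictly concave and positive on $[0,\infty)$, so $I'(t)>0$ for all $t\geq0$, since otherwise $I$ would become negative in finite time. Moreover $\int_0^\infty\delta(t)\,dt\leq I'(0)/4<\infty$, and hence $I'(t)\downarrow\ell\geq0$. I would then split into two cases. If $\sup_t\|u(t)\|_{H^1}=\infty$ for the global solution, there is a sequence along which $\delta(t_n)\to\infty$. Otherwise we are in the bounded situation of Proposition~\ref{prop1.7}, and that proposition (already proved) gives the contradiction directly. So only the unbounded-but-global case remains.

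In the unbounded case I would use the uncertainty-principle inequality
\[
|I'(t)|\le 4\|xu\|_{L^2}\|\nabla u\|_{L^2}=4\sqrt{I(t)}\,\|\nabla u(t)\|_{L^2}.
\]
Combined with $\|\nabla u\|_{L^2}^2\lesssim 1+\delta$, this gives $\delta\gtrsim (I')^2/I-C$, and then $I''\leq -c(I')^2/I + C$. The main obstacle is this step: the constant term coming from the relation $\delta=\|u\|_{\dot H_V^1}^2-\|Q\|_{\dot H^1}^2$ spoils the standard convexity argument. What I would try first is the sharper identity obtained from $E_V=E_0(Q)$ and the sharp Gagliardo--Nirenberg inequality, namely
\[
8K_V\le -4\delta-\tfrac43\delta^{3/2}\|Q\|_{\dot H^1}^{-1}\cdot(\text{const}),
\]
which is superlinear for large $\delta$. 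Together with the Cauchy--Schwarz bound, this gives an ODE inequality $I''\lesssim -(I')^3/I^{3/2}$ once $I'\gtrsim\sqrt I$. That ODE forces $I$ to vanish or $I'$ to become negative in finite time, a contradiction. In the regime where $I'\lesssim\sqrt I$ with $\delta$ integrable, we instead get bounded $I$. We then combine the integrability $\int\delta<\infty$ with Corollary~\ref{sequencec}-style modulation (Proposition~\ref{modulation}, Lemma~\ref{power-center-estimate}): $\delta(t_n)\to0$ forces $|y(t_n)|\to\infty$, while bounded variance and mass $M(Q)$ concentrated near $y(t_n)$ force $I(t_n)\gtrsim |y(t_n)|^2\to\infty$. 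This is the contradiction, and it gives blow-up in both directions.
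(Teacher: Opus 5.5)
Your reduction to the normalized threshold and the estimates $I''=8K_V\le-4\delta<0$, $I'\ge0$, $\int_0^\infty\delta\,dt\le I'(0)/4$ coincide with the paper's. Disposing of the bounded case via Proposition~\ref{prop1.7} is also legitimate. The gap is in the unbounded case.

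Your ``sharper'' inequality $8K_V\le-4\delta-c\,\delta^{3/2}$ cannot hold. Under $M(u)=M(Q)$ and $E_V(u)=E_0(Q)$, the relation $8K_V(u)=-4\delta-4(2-\mu)\int V|u|^2\,dx$ from \eqref{energy-virial-general} is an exact identity. For $\mu=2$ it reads $I''=-4\delta$, and no Gagliardo--Nirenberg argument can add a superlinear term to an identity. So the ODE inequality $I''\lesssim-(I')^3/I^{3/2}$ has no basis. The complementary regime is also wrong as stated. $I'\lesssim\sqrt I$ only gives $I\lesssim t^2$, and monotonicity of $I'$ gives $I(t)\le I(0)+I'(0)t$, but $I$ need not be bounded: if $I'\downarrow\ell>0$, it grows linearly. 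Moreover, the two regimes can alternate in time, which your argument does not handle. (Also, $\mu=2$ is not really special here: for every $\mu\in(1,2]$ only $I''\le-4\delta$ is usable, since $\int V|u|^2$ may tend to zero.)

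The missing idea is that $\delta\in L^1(0,\infty)$ already rules out the unbounded scenario, so no case split is needed. The paper takes $t_n\to\infty$ with $\delta(t_n)\to0$ and uses the modulation bounds $\alpha\ge c\delta$ and $|\alpha'|\le C\delta$ on $\{\delta\le\eta_*\}$. If $\delta$ reached $\eta_*$ at a first time $s_n>t_n$, then $c\eta_*/2\le\alpha(s_n)-\alpha(t_n)\le C\int_{t_n}^\infty\delta\,dt\to0$, which is impossible. Hence $\delta\le\eta_*$ for all large $t$, the solution is automatically bounded, and Theorem~\ref{rigidity} gives the contradiction without invoking Proposition~\ref{prop1.7}.

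Alternatively, your final idea can be made quantitative, and then it suffices on its own. Since $\int_0^\infty\delta<\infty$, one has $\liminf_{t\to\infty}t\,\delta(t)=0$, so choose $t_n\to\infty$ with $t_n\delta(t_n)\to0$. Lemma~\ref{power-center-estimate}, together with the mass of $u(t_n)$ near $y(t_n)$, gives $I(t_n)\gtrsim|y(t_n)|^2\gtrsim\delta(t_n)^{-4/\mu}\ge\delta(t_n)^{-2}$. This quantity is $\gg t_n^2$, which contradicts $I(t_n)\le I(0)+I'(0)t_n$.
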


\begin{proof}[Proof of Theorem~\ref{finite-variance}]

The scaling in
Corollary~\ref{product-threshold} preserves finite variance, so it is
enough to prove this theorem under the assumption \eqref{normalized-threshold}.

Suppose, towards a contradiction, that $u$ is forward global.  Finite
variance persists on compact time intervals.  Thus
\[
 I(t)=\int_{\R^3}|x|^2|u(t,x)|^2\,dx
\]
is finite for every $t\geq0$, and the full virial identity gives
\begin{equation}\label{full-negative-virial}
 I''(t)=8K_V(u(t))
       =-4\delta(t)-4(2-\mu)\int_{\R^3}V|u(t)|^2\,dx
       \leq-4\delta(t)<0.
\end{equation}
Since $I(t)\geq0$, one must have $I'(t)\geq0$ for every $t\geq0$.
Indeed, if $I'(t_0)<0$, then \eqref{full-negative-virial} gives
$I'(t)\leq I'(t_0)<0$ for $t\geq t_0$, which makes $I(t)$ negative for
large $t$.  Integrating \eqref{full-negative-virial}, we obtain
\begin{equation}\label{finite-variance-delta-integrable}
 4\int_0^T\delta(t)\,dt
 \leq I'(0)-I'(T)\leq I'(0),\qquad T>0.
\end{equation}
It follows that $\delta\in L^1(0,\infty)$, and hence there is a sequence
$t_n\to\infty$ such that $\delta(t_n)\to0$.

Let $\eta_*$ be as in Theorem~\ref{rigidity}.  Proposition
\ref{modulation} gives constants $c,C>0$ such that
\begin{equation}\label{appendix-modulation-bounds}
 \alpha(t)\geq c\delta(t),\qquad
 |\alpha'(t)|\leq C\delta(t)
\end{equation}
whenever $0<\delta(t)\leq\eta_*$.  We claim that, for all sufficiently
large $n$,
\begin{equation}\label{appendix-eventual-modulation}
 \delta(t)\leq\eta_*,\qquad t\geq t_n.
\end{equation}
Otherwise, after taking $n$ large enough that $\delta(t_n)<\eta_*$, let
$s_n>t_n$ be the first time for which $\delta(s_n)=\eta_*$.  The
modulation parameters are defined on $[t_n,s_n]$, and
$\alpha(t_n)\to0$.  Therefore, for large $n$,
\[
 \frac{c\eta_*}{2}
 \leq \alpha(s_n)-\alpha(t_n)
 \leq\int_{t_n}^{s_n}|\alpha'(t)|\,dt
 \leq C\int_{t_n}^{\infty}\delta(t)\,dt,
\]
which tends to zero by \eqref{finite-variance-delta-integrable}.  This
contradiction proves \eqref{appendix-eventual-modulation}.

After translating time by one sufficiently large $t_n$, the solution
satisfies \eqref{rigidity-hypothesis} for all positive times. This
contradicts Theorem~\ref{rigidity}; hence the positive lifespan endpoint
is finite.  Applying the same argument to $\overline{u(-t)}$ proves
finite-time blow-up in the negative direction as well.
\end{proof}

\end{document}